\documentclass[pdflatex,sn-mathphys-num]{sn-jnl}

\usepackage{graphicx}
\usepackage{multirow}
\usepackage{amsmath,amssymb,amsfonts}
\usepackage{amsthm}
\usepackage{mathrsfs}
\usepackage[title]{appendix}
\usepackage{xcolor}
\usepackage{textcomp}
\usepackage{manyfoot}
\usepackage{booktabs}
\usepackage{algorithm}
\usepackage{algorithmicx}
\usepackage{algpseudocode}
\usepackage{listings}

\usepackage[all]{xy}
\usepackage{enumerate}
\usepackage{epic}
\usepackage{ulem}

\newtheorem{theorem}{\sc Theorem}[section]
\newtheorem{lemma}[theorem]{\sc Lemma}
\newtheorem{remark}[theorem]{\sc Remark}
\newtheorem{example}[theorem]{\sc Example}
\newtheorem{definition}[theorem]{\sc Definition}
\newtheorem{question}{\scshape Question}

\newtheorem{proposition}[theorem]{\sc Proposition}
\newtheorem{corollary}[theorem]{\sc Corollary}

\newtheorem{theoremalfa}{Theorem}

\begin{document}

\title{Profinite groups whose closed subgroups are subnormal of defect at most two}

\author[1]{\fnm{Anderson} \sur{Porto}}\email{ander.porto@ict.ufvjm.edu.br}
\affil[1]{\orgdiv{ICT}, \orgname{Federal University of the Jequitinhonha and Mucuri Valleys}, \orgaddress{\city{Diamantina}, \country{Brazil}}}

\equalcont{These authors contributed equally to this work.}

\author*[2]{\fnm{Igor} \sur{Lima}}\email{igor.matematico@gmail.com}

\equalcont{These authors contributed equally to this work.}

\author[2]{\fnm{Millena} \sur{Silva}}\email{millena\_andrade10@hotmail.com}
\affil[2]{\orgdiv{Department of Mathematics}, \orgname{University of Brasilia}, \orgaddress{\city{Brasilia}, \country{Brazil}}}

\equalcont{These authors contributed equally to this work.}

\abstract{In this paper, we classify profinite groups whose closed subgroups are subnormal of defect at most two, providing profinite versions of theorems due to Heineken and to Mahdavianary. We also establish that if $G$ is a finitely generated periodic pro-$\pi$ group, where $\pi$ is a finite set of odd primes, and all its procyclic subgroups are subnormal of defect at most $2$, then $G$ is a finite $\pi$-group in which every abelian subgroup is subnormal of defect at most $2$.
}

\keywords{Generalized Dedekind groups, Profinite groups, Mahdavianary's Theorem, Subnormal defect.} 
\pacs[MSC Classification]{20E34, 20F19, 20F45.}

\maketitle

\section{Introduction}

A subgroup $H$ of a group $G$ is said to be \textit{subnormal} in $G$ if there exists a finite chain of subgroups
$$ H=G_{m}\unlhd G_{m-1} \unlhd \cdots \unlhd G_{1} \unlhd G_{0} = G \quad \textbf{ $(\star)$ } $$
for some integer $m \geq 0$. If $H$ admits such subnormal chain, we say that $H$ is \textit{subnormal of defect at most $m$}, and write $H \unlhd^{m} G$. Among all chains of the form \textbf{$(\star)$} from $H$ to $G$, the minimum length is called the \textit{subnormal defect} (or simply the \textit{defect}) of $H$ in $G$. Note that normal subgroups are precisely those of defect at most $1$.

The systematic study of subnormal subgroups originated with Wielandt \cite{Wielandt}, who established the foundational theory for finite groups. The concept of defect was later formalized to analyze the length of subnormal chains, with significant contributions from Roseblade \cite{rose} and Möhres \cite{moehres} regarding infinite groups.

It is a classical result that if a group $G$ is nilpotent of class $c$, then every subgroup of $G$ is subnormal of defect at most $c$ (see \cite[p.~154]{hall 2}). Roseblade \cite{rose} established a strong converse to this theorem: any group in which every subgroup is subnormal of defect at most $d$ is necessarily nilpotent, and its nilpotency class is bounded by a function $\mu(d)$ depending exclusively on $d$. The bounding condition about the subnormal defect in Roseblade's Theorem cannot be relaxed to merely requiring all subgroups to be subnormal. Indeed, Heineken and Mohamed \cite{HeineMoha} constructed a group in which every subgroup is subnormal, yet the center is trivial, which implies that the group cannot be nilpotent.

Let $H$ be a closed subgroup of a profinite group $G$. We say that $H$ is \textit{subnormal} in $G$ if there exists a chain as in $(\star)$ in which every intermediate subgroup is closed in $G$. In the profinite context, the concept of subnormal defect coincides with the abstract definition, as long as the subgroups in the chain are closed (see \cite{dusautoy}). We write $H \unlhd^{d}_{c} G$ if $H$ is a closed subgroup of $G$ of subnormal defect at most $d$, where the intermediate subgroups are also closed.

Clearly, $G$ itself is the unique subgroup of defect $0$. In an abstract Dedekind group, every subgroup is normal, hence it has subnormal defect at most $1$ (see \cite{porto, lennox}). Moreover, any group in which every subgroup has defect at most \(1\) is nilpotent of class at most \(2\), and this bound is sharp; in Roseblade's terminology, this is the content of \(\mu(1)=2\). However, the converse fails: note that the dihedral group \(D_8\) is nilpotent of class \(2\) yet possesses a non-normal subgroup with defect \(2\). If $G$ is a profinite Dedekind group, the analogous property holds for all its closed subgroups (see \cite{PB19}). Furthermore, it is known that non-normal (closed) subgroups of (profinite) Cappitt groups have subnormal defect exactly $2$ (see \cite{Por19, Cap71}). Note that the only torsion-free (profinite) Dedekind groups are (profinite) abelian, and hence have nilpotency class equal to 1. 

The structure of groups whose subgroups have subnormal defect at most 2 has been thoroughly investigated by authors such as Mahdavianary \cite{ mahdavinilpotente, mahdavimain}, Cappitt \cite{cappitt2}, and Heineken \cite{heineken}. A fundamental result, valid for both abstract and profinite groups, states that if every cyclic (respectively, procyclic) subgroup is normal in a group (respectively, profinite group) $G$, then every (closed) subgroup of $G$ is normal; that is, $G$ is a (profinite) Dedekind group (see \cite{porto, PB19}). However, for groups in which all subgroups are 2-subnormal, this equivalence does not hold in general. Mahdavianary proved that if a group $G$ is non-periodic, or periodic with no elements of even order, then every cyclic subgroup of $G$ is 2-subnormal if and only if every abelian subgroup of $G$ is 2-subnormal (\cite[Theorem~1(i)]{mahdavimain}). For 2-generated groups, this equivalence holds unconditionally (see \cite{mahdavithesis}). Several other classes of groups are discussed in Mahdavianary's work, including examples proving that the corresponding group classes are strictly distinct (\cite[Theorem~1(ii)--(iii)]{mahdavimain}). 

It is known that a group in which every cyclic subgroup is $2$-subnormal is nilpotent of class at most $3$ (see Theorem 1 in \cite{mahdavinilpotente}). Consequently, the same bound holds for any group in which every subgroup is $2$-subnormal, since in this case every cyclic subgroup is in particular $2$-subnormal. In Roseblade's terminology, this is the content of $\mu(2)=3$. If such a group is torsion-free, its nilpotency class is at most $2$ (see \cite{kappetraus} or \cite{stadelmann} ). However, this condition fails for higher defects. The requirement that every cyclic subgroup be $3$-subnormal is insufficient to guarantee nilpotency, in contrast to the case where every subgroup is $3$-subnormal, which by Roseblade's theorem implies nilpotency of class at most $\mu(3)$. Traustason \cite{traust} established that, for groups which are either torsion-free or of prime exponent different from $7$, the property that every subgroup is $3$-subnormal is equivalent to nilpotency of class at most $3$. In contrast, for groups of exponent $7$, the same condition still implies nilpotency, but with class at most $4$, and this bound is sharp.  More recently, Smith and Traustason \cite{SmiTra} showed that torsion-free groups with all subgroups subnormal of defect at most $4$ are nilpotent of class at most $4$. Taken together, these results reveal a sharp contrast between the behaviour of periodic and non-periodic groups with respect to subnormality restrictions. 

The following problem, due to Traustason (see Problem 16.97 in the Kourovka Notebook \cite{kourovka}), asks whether a torsion-free group all of whose subgroups are subnormal of defect at most \(n\) must have nilpotency class at most \(n\). As discussed in the preceding paragraphs, the answer is known to be positive for \(n \leq 4\). However, a recent construction of Brescia, Di Siena and Trombetti \cite{Brescia2026} provides a 2-generated torsion-free nilpotent group of nilpotency class six in which every subgroup is subnormal of defect at most five, thus answering the case \(n=5\) negatively. Accordingly, the problem reduces to the following.

\begingroup
\setlength{\topsep}{-8pt}
\begin{question}[Traustason -- Problem 16.97, revised]
\label{korou}
Does every torsion-free group all of whose subgroups are subnormal of defect at most \(n\) have nilpotency class at most \(n\), for all \(n \geq 6\)?
\end{question}
\endgroup

An analogous question may be asked for profinite groups, although only for \(n\geq 2\). The restriction \(n\geq 6\) in the abstract setting reflects the fact that the cases \(n=1,2,3,4\) are settled in the literature and the case \(n=5\) has been settled negatively by the construction above, whereas in the profinite context the problem remains open from \(n\geq 2\) onwards (see Question~1). In this paper, we take a step in this direction by settling the case \(n=2\) for torsion-free profinite groups (see Theorem~\ref{leviprofinite}). More precisely, taking a non-periodic profinite group such that all its infinite procyclic subgroups are torsion-free, we prove a profinite version of
Heineken's theorem on \(2\)-Engel groups \cite[Theorem~1(I)]{heineken},
which is used in the proof of Theorem~\ref{maintheorem}, where we establish
a profinite analogue of Mahdavianary's theorem
\cite[Theorem~1]{mahdavimain}. We also show that if $G$ is a finitely generated periodic pro-$\pi$ group, with $\pi$ a finite set of odd primes, and every procyclic subgroup of $G$ is subnormal of defect at most $2$, then $G$ is a finite $\pi$-group in which all its abelian subgroups are subnormal of defect at most $2$. Finally, we investigate the strictness of the inclusions among the classes of profinite groups whose closed, abelian, and procyclic subgroups, respectively, are $2$-subnormal. We construct infinite profinite counterexamples showing that these classes are pairwise distinct, thus extending Mahdavianary's discrete examples to the profinite setting. 

\section{Notation, commutator identities and auxiliary results}

Let $G$ be a group. For an element $x \in G$, we denote its order by $|x|$. If $x$ has infinite order, we write $|x|=\infty$; otherwise, we write $|x|<\infty$. The set of torsion elements (elements of finite order) of $G$ is denoted by $\operatorname{tor}(G)$. For elements $g, h \in G$, the conjugate of $h$ by $g$ is denoted by $h^g = g^{-1}hg$, and the commutator of $h$ and $g$ is defined as $[h, g] = h^{-1}g^{-1}hg = h^{-1}h^g$. More generally, for elements $x_1, x_2, \ldots, x_n \in G$ ($n \geq 2$), the \textit{left-normed} commutator of weight $n$ is defined recursively by the rule $[x_1, x_2, \ldots, x_n] = [[x_1, \ldots, x_{n-1}], x_n]$. When $x_1 = x$ and $x_2 = \cdots = x_n = y$, we abbreviate the commutator $[x, y, \ldots, y]$ as $[x,_m y]$, where $m = n-1$ is the number of times $y$ appears. For $g \in G$, let $\tau_g$ denote the inner automorphism of $G$ defined by $\tau_g(x) = g^{-1}xg$ for all $x \in G.$  

For a subgroup $H$ of a group $G$, denote by $C_G(H)$, $N_G(H)$ and $H^G=\langle h^g \mid h\in H,\ g\in G\rangle$ the centralizer, normalizer and normal closure of $H$ in $G$, respectively.

Let $G$ be a profinite group, and write $\overline{X}$ for the topological closure of a subset $X\subseteq G$. We write $H\leqslant_c G$, $H\unlhd_c G$, $H\leqslant_o G$ and $H\unlhd_o G$ according as $H$ is a closed subgroup, a closed normal subgroup, an open subgroup or an open normal subgroup of $G$; if $H$ is closed, then so are $C_G(H)$ and $N_G(H)$.

Now let $H$ be a subgroup of a residually finite group $G$, with profinite completion $\widehat{G}$. The symbols $\left(\overline{H}\right)^{\widehat{G}}$ and $\overline{H^G}$ denote, respectively, the normal closure of $\overline{H}$ in $\widehat{G}$ and the topological closure in $\widehat{G}$ of the normal closure $H^G$ of $H$ in $G$; thus $\overline{\overline{H}^{\widehat{G}}}$ is the smallest closed normal subgroup of $\widehat{G}$ containing $\overline{H}$. 

Let $G$ be a profinite group. Define the series $(H_i)_{i=0}^{\infty}$ of successive normal closures by
\[
H_0=G,\qquad H_{i+1}=\overline{H^{H_i}}\quad(i\geq 0),
\]
where the closure is taken in $G$. Thus we obtain a descending series
$$H \leq \cdots \unlhd_c H_{i+1} \unlhd_c H_i \unlhd_c \cdots \unlhd_c H_1 \unlhd_c H_0 = G.$$ In fact, this series descends as rapidly as possible among all normal series of closed subgroups of $G$ that contain $H.$ That is, whenever $(K_i)_{i=0}^{\infty}$ is another descending series with $K_0 = G$, $K_{i+1} \unlhd_c K_i$ and $H \subseteq K_i$ for all $i$, we also have that for each $i$, $H_i \subseteq K_i$. Thus the series constructed above yields the fastest possible descent among all normal series of closed subgroups starting at $G$ and containing $H$. The proof follows the abstract argument of \cite[p.~1--2]{lennox}, adapted to the profinite setting by taking topological closures. Consequently, $H$ is subnormal in $G$ if and only if the descending series reaches $H$ at some finite stage, i.e\ $H_n = H$ for some $n \geq 0$. The smallest such $n$ is precisely the subnormal defect of $H$ in $G$. When, in particular, \(H \leq_c G\) has defect at most \(2\), it follows that
$H = H_2 = \overline{H^{H_1}}=\overline{H^{\overline{H^G}}}$,
that is,
$H \unlhd_c \overline{H^G} \unlhd_c G$. The abstract situation can be seen in Chapter~1 of \cite{lennox}.

A (profinite) group $G$ is an \textit{$n$-Engel (profinite) group} if $[x,_n y] = 1$ for all $x, y \in G$. An element $x \in G$ is called a \textit{right $n$-Engel element} of $G$ if $[x,_n g] = 1$ for all $g \in G$. Analogously, $x \in G$ is a \textit{left $n$-Engel element} if $[g,_n x] = 1$ for all $g \in G$. A comprehensive discussion on left-normed commutators and Engel conditions can be found in Robinson \cite[Chapter~7]{chapter7}.

For further background on profinite groups, we refer the reader to Ribes and Zalesskii \cite{RZ10} and Wilson \cite{wilson98}. For the abstract theory of groups, see Robinson \cite{robinson} and Rotman \cite{rotman}.

The following commutator identities will be used throughout the remainder of this paper without explicit reference. Identities \eqref{comut inverso} and \eqref{comut conjugado} are immediate; identity (3) follows from \eqref{comut conjugado}. Identities \eqref{comut produto esq} through \eqref{comut conjug em comut} can be found in Heineken \cite{heineken} and Mahdavianary \cite{mahdavinilpotente}. The Hall-Witt identity is a standard result (see \cite[5.1.5]{robinson}).

\begin{lemma}[Commutator identities] \label{prop comutadores} 
Let $a, b, c$ be elements of a group $G$. Then the following identities hold:
\begin{enumerate}
    \item $[a,b] = 1 \Longleftrightarrow [a,b^{-1}] = 1$; \label{comut inverso}
    \item $b^a = b[b,a]$, $ab = ba[a,b]$, and $[a,b]^c = [a^c,b^c]$; \label{comut conjugado}
    \item $[a^{c^{-1}}, b] = [a, b[b,c]]^{c^{-1}}$; 
    \item $[ab,c] = [a,c]^b[b,c] = [a,c][a,c,b][b,c]$; \label{comut produto esq}
    \item $[a,bc] = [a,c][a,b]^c = [a,c][a,b][a,b,c]$; \label{comut produto dir}
    \item $[a^{-1},b] = [b,a]^{a^{-1}}$ and $[a,b^{-1}] = [b,a]^{b^{-1}}$; \label{comut tirar inverso}
    \item $b^{b^a} = b[b,[b,a]]$; \label{comut conjug em comut}
    \item \textit{(The Hall-Witt identity)} $[a,b^{-1},c]^{b} \cdot [b,c^{-1},a]^c \cdot [c,a^{-1},b]^a = 1$.
\end{enumerate}
\end{lemma}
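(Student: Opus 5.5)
The plan is to check each identity by direct expansion, using only the conventions fixed above: $b^a=a^{-1}ba$, $[a,b]=a^{-1}b^{-1}ab=a^{-1}a^b$, and left-normed commutators. I will treat the items in the order listed, since the later ones reuse the earlier ones.

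\textbf{Items (1)--(3).} For (1), $[a,b]=1$ says that $a$ and $b$ commute, and $a$ commutes with $b$ exactly when it commutes with $b^{-1}$. For (2):
\begin{itemize}
\item $b[b,a]=b\,b^{-1}a^{-1}ba=b^a$;
\item $ba[a,b]=ba\,a^{-1}b^{-1}ab=ab$;
\item $[a,b]^c=[a^c,b^c]$ holds because conjugation by $c$ is an automorphism.
\end{itemize}
For (3), apply the automorphism $\tau_{c^{-1}}$ (conjugation by $c^{-1}$) to both sides. The left side becomes $[a,b^c]$, since $(a^{c^{-1}})^c=a$ and $b\mapsto b^c$. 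The right side becomes $[a,b[b,c]]$. Then (2) gives $b[b,c]=b^c$, so the two sides agree.

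\textbf{Items (4)--(7).} For (4), expand:
\[
[ab,c]=b^{-1}a^{-1}c^{-1}abc=b^{-1}\bigl(a^{-1}c^{-1}ac\bigr)b\cdot b^{-1}c^{-1}bc=[a,c]^b[b,c].
\]
Then write $[a,c]^b=[a,c]\,[[a,c],b]$ using (2), which gives the second form $[a,c][a,c,b][b,c]$. Item (5) is the mirror computation:
\[
[a,bc]=a^{-1}c^{-1}b^{-1}abc=\bigl(a^{-1}c^{-1}ac\bigr)\,c^{-1}\bigl(a^{-1}b^{-1}ab\bigr)c=[a,c][a,b]^c,
\]
followed by the same use of (2) to expand $[a,b]^c$. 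For (6), both formulas come from one-line expansions: $[a^{-1},b]=ab^{-1}a^{-1}b$ is the conjugate of $[b,a]=b^{-1}a^{-1}ba$ by $a^{-1}$, and symmetrically for $[a,b^{-1}]$. For (7), set $x=b^a$. Then $b^{x}=b[b,x]$ by (2), and it remains to show $[b,b^a]=[b,[b,a]]$. This follows from $b^a=b[b,a]$ and (5), because $[b,b]=1$ makes $[b,b[b,a]]=[b,[b,a]]^{\,}$ after the cross term is conjugated by something commuting appropriately. More concretely, I will expand $[b,b\,y]$ with $y=[b,a]$ directly: $b^{-1}y^{-1}b^{-1}b\,b\,y=b^{-1}y^{-1}by=[b,y]$.

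\textbf{Hall--Witt and the main obstacle.} The only genuine computation is the Hall--Witt identity. Following the standard proof cited from \cite[5.1.5]{robinson}, I set
\[
u=aca^{-1}bab^{-1}cbc^{-1}.
\]
Expanding $[a,b^{-1},c]^b$ gives $u_1^{-1}u_2$ for suitable cyclic shifts $u_1,u_2$ of this word, and permuting $a\to b\to c\to a$ produces the other two factors. The product then telescopes to $1$.

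The main obstacle is bookkeeping: the expansions must match the paper's conventions (conjugation on the right, left-normed commutators). If the cyclic-word trick is hard to align, the fallback is to expand all three factors fully as words in $a^{\pm1},b^{\pm1},c^{\pm1}$ and cancel them letter by letter.
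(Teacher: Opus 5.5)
Your checks of (1)--(7) are correct and amount to the direct verification the paper has in mind: it calls (1)--(2) immediate, derives (3) from (2), and cites Heineken and Mahdavianary for (4)--(7). There are two cosmetic slips. In (3), the automorphism you actually apply is $x\mapsto x^{c}$, i.e.\ $\tau_c$ in the paper's notation, not $\tau_{c^{-1}}$ (which sends $a^{c^{-1}}$ to $a^{c^{-2}}$). In (7), nothing needs to ``commute appropriately'', since (5) gives $[b,by]=[b,y][b,b]^{y}=[b,y]$ directly.

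The Hall--Witt step, which you rightly single out as the only real computation, fails as written. Cyclic shifts of your word $u=aca^{-1}bab^{-1}cbc^{-1}$ are conjugates of $u$, and no quotient $u_1^{-1}u_2$ of two of them equals $[a,b^{-1},c]^{b}$. In the free group on $a,b,c$, the latter is the reduced word $a^{-1}b^{-1}ac^{-1}a^{-1}\,bab^{-1}cb$ of length $10$. This would force $u_1,u_2$ to share a common $4$-letter prefix, followed by $aca^{-1}ba$ and $bab^{-1}cb$ respectively. In your cyclic word those prefixes are $b^{-1}cbc^{-1}$ and $c^{-1}aca^{-1}$, which differ.

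The standard proof (Robinson's 5.1.5, which the paper cites) instead uses three $5$-letter words. These are exactly the length-$5$ windows of your cyclic word starting at positions $1,4,7$:
\[
u=aca^{-1}ba,\qquad v=bab^{-1}cb,\qquad w=cbc^{-1}ac .
\]
Expanding gives $[a,b^{-1},c]^{b}=a^{-1}b^{-1}ac^{-1}a^{-1}\,bab^{-1}cb=u^{-1}v$. The substitution $a\to b\to c\to a$ sends $u\mapsto v\mapsto w\mapsto u$, so it yields $[b,c^{-1},a]^{c}=v^{-1}w$ and $[c,a^{-1},b]^{a}=w^{-1}u$. The product therefore telescopes to $1$. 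With this correction (or with your letter-by-letter fallback), the argument is complete and coincides with the paper's cited one.
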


\begin{remark} \label{aremark} 
Let $H \le G$ be a subgroup. We have $H^G = H \cdot [H,G]$, since $h^g = h \cdot [h,g]$ for all $h \in H$ and $g \in G$. Hence $H^{H^G} = H^{[H,G]} = H^{[G,H]}$. Therefore $H^G$ normalizes $H$ if and only if $[G,H]$ normalizes $H$. Now let $G$ be a profinite group and $H \leq_c G$. Since $H$ is closed, the normalizer $N_G(H)$ is a closed subgroup of $G$. Therefore $[G,H] \subseteq N_G(H)$ if and only if $\overline{[G,H]} \subseteq N_G(H)$; that is, $H$ is normalized by $[G,H]$ if and only if it is normalized by $\overline{[G,H]}$. Since $\overline{H^G} = \overline{H \cdot [H,G]} = \overline{H \cdot [G,H]}$, we conclude that $\overline{H^G}$ normalizes $H$ if and only if $\overline{[G,H]}$ normalizes $H$. \end{remark}

We end this section with some basic results used throughout the article, with proofs included for convenience. 

Let $G$ be a residually finite group; we identify $G$ with its image in $\widehat{G}$, so that $G$ is a dense subgroup of $\widehat{G}$. A subgroup $H\le G$ is said to be closed in the profinite topology of $G$ if $H=\overline{H}\cap G$, where $\overline{H}$ denotes the closure of $H$ in $\widehat{G}$; equivalently, if $H$ is the intersection of all finite-index subgroups of $G$ containing $H$. The group $G$ is \emph{LERF} (Locally Extended Residually Finite) if every finitely generated subgroup of $G$ is closed in the profinite topology of $G$. If the same holds for every subgroup of $G$ (not necessarily finitely generated), $G$ is said to be \emph{ERF} (Extended Residually Finite). By a classical theorem of Mal'cev (see \cite[Theorem~5.4.16]{robinson}), every polycyclic group is ERF, and consequently LERF.

\begin{lemma} \label{fechos normais}
Let $H$ be a subgroup of a residually finite group $G$ and let $\widehat{G}$ be its profinite completion. Then
\[
\overline{\overline{H}^{\widehat{G}}} = \overline{H^G}.
\]
\end{lemma}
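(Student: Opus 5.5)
We prove the two inclusions between closed subgroups of $\widehat{G}$ separately. Throughout, we use that $G$ is dense in $\widehat{G}$ and that conjugation, multiplication and closure behave continuously.

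For $\overline{H^G}\subseteq \overline{\overline{H}^{\widehat{G}}}$: every generator $h^g$ of $H^G$, with $h\in H$ and $g\in G$, lies in $\overline{H}^{\widehat{G}}$, since $h\in\overline{H}$ and $g\in\widehat{G}$. Hence $H^G\subseteq \overline{H}^{\widehat{G}}$, and taking closures gives the inclusion.

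For the reverse inclusion, we show that $N:=\overline{H^G}$ is a closed normal subgroup of $\widehat{G}$ containing $\overline{H}$. It contains $\overline{H}$ because $H\subseteq H^G$. For normality, fix $x\in\widehat{G}$. The map $\tau_x$ is a homeomorphism of $\widehat{G}$, so $N^x=\overline{(H^G)^x}$.
\begin{itemize}
\item If $g\in G$, then $(H^G)^g=H^G$, so $N^g=N$.
\item The normalizer $N_{\widehat{G}}(N)$ of the closed subgroup $N$ is closed in $\widehat{G}$. It contains $G$, which is dense, so $N_{\widehat{G}}(N)=\widehat{G}$.
\end{itemize}
Thus $N\unlhd_c\widehat{G}$. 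Since $\overline{\overline{H}^{\widehat{G}}}$ is the smallest closed normal subgroup of $\widehat{G}$ containing $\overline{H}$, as noted in the notation section, we get $\overline{\overline{H}^{\widehat{G}}}\subseteq N$.

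The only step needing care is the closedness of $N_{\widehat{G}}(N)$. It holds because $N_{\widehat{G}}(N)=\bigcap_{n\in N}\{x: n^x\in N\}\cap\{x: n^{x^{-1}}\in N\}$. Each of these sets is closed, being the preimage of the closed set $N$ under a continuous map.

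Equivalently, one can argue directly that the set $\{x\in\widehat{G}: N^x\subseteq N\}$ is closed and contains $G$. Closedness follows because, for each $n\in N$, the map $x\mapsto n^x$ is continuous. Therefore this set is all of $\widehat{G}$.
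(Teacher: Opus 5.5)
Your proof is correct and follows essentially the same route as the paper: you prove both inclusions the same way, and the key step in each is that $\overline{H^G}$ is normal in $\widehat{G}$ because $G$ normalizes it, $G$ is dense, and conjugation is continuous. Phrasing this through the closedness of $N_{\widehat{G}}(\overline{H^G})$ is only a repackaging of the paper's argument with the continuous map $x\mapsto x^{-1}yx$, which you yourself give in your final paragraph.
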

\begin{proof}
Since $H \le H^{G}$, we have $\overline{H} \le \overline{H^{G}}$ (closures taken in $\widehat{G}$). Moreover, $H^{G} \unlhd G$, so $G$ normalizes $H^{G}$; as the conjugation by elements of $G$ is continuous on $\widehat{G}$, $G$ normalizes $\overline{H^{G}}$ as well. Since $G$ is dense in $\widehat{G}$, it follows that $\overline{H^{G}} \unlhd_c \widehat{G}$: indeed, fix $y \in \overline{H^G}$. The map $\varphi_y(x) = x^{-1}yx$ $(x \in \widehat{G})$ is continuous on $\widehat{G}$, and $\varphi_y(G) \subseteq \overline{H^G}$. By continuity, $\varphi_y(\widehat{G}) = \varphi_y(\overline{G}) \subseteq \overline{\varphi_y(G)} \subseteq \overline{H^G}$, since $\overline{H^G}$ is closed. Hence $x^{-1}yx \in \overline{H^G}$ for all $x \in \widehat{G}$, i.e.\ $\overline{H^G} \unlhd_c \widehat{G}$. Since $H \le \overline{H} \le  \overline{H^G}$ and $\overline{H^G} \unlhd_c \widehat{G}$, we get $\overline{\overline{H}^{\widehat{G}}} \le \overline{H^G}$. 

On the other hand, $H^G$ is generated by the conjugates $g^{-1}hg$, for $g \in G$ and $h \in H$, all of which lie in the closed normal subgroup $\overline{\overline{H}^{\widehat{G}}}$, so $H^G \le \overline{\overline{H}^{\widehat{G}}}$. Since $\overline{\overline{H}^{\widehat{G}}}$ is closed, and $\overline{H^G}$ is the intersection of all closed sets containing $H^G$, it follows that $\overline{H^G} \le \overline{\overline{H}^{\widehat{G}}}$. Hence the equality holds.
\end{proof}

\begin{lemma}\label{lem_polycyclic_normal_closure}
Let $G$ be a group with the property \textnormal{ERF} (e.g., a polycyclic group), and let $H$ be a subgroup of $G$. Then \[
\overline{\overline{H}^{\widehat{G}}} \cap G = H^G = \overline{H}^{\widehat{G}} \cap G.
\]
\end{lemma}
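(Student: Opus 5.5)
The plan is to prove the two equalities separately, leaning on Lemma~\ref{fechos normais} for the first and on the ERF property for the second.

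\textbf{First equality.} By Lemma~\ref{fechos normais} we have $\overline{\overline{H}^{\widehat{G}}} = \overline{H^G}$, so intersecting with $G$ gives
\[
\overline{\overline{H}^{\widehat{G}}} \cap G = \overline{H^G} \cap G .
\]
Since $G$ is ERF, every subgroup of $G$ is closed in the profinite topology of $G$. In particular $H^G$ is, which by definition means $\overline{H^G}\cap G = H^G$. This yields $\overline{\overline{H}^{\widehat{G}}} \cap G = H^G$.

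\textbf{Second equality.} Since $\overline{H}^{\widehat{G}} \le \overline{\overline{H}^{\widehat{G}}}$, the first equality gives $\overline{H}^{\widehat{G}} \cap G \le H^G$. For the reverse inclusion, note that $H\le \overline{H}$ and $G\le \widehat{G}$. Hence every generator $h^g$ of $H^G$, with $h\in H$ and $g\in G$, lies in the abstract normal closure $\overline{H}^{\widehat{G}}$ and also in $G$. So $H^G \le \overline{H}^{\widehat{G}}\cap G$, and the two inclusions give equality.

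The only delicate point is the first step. One must use that ERF gives closedness of $H^G$ itself, not merely of $H$, and that "closed in the profinite topology of $G$" is exactly the identity $\overline{K}\cap G=K$ with closure taken in $\widehat{G}$. Both are immediate from the definitions, so I expect no real obstacle. The argument needs no control over whether the abstract normal closure $\overline{H}^{\widehat{G}}$ is itself closed, because it is sandwiched between $H^G$ and $\overline{\overline{H}^{\widehat{G}}}$.
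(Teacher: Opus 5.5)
Your proposal is correct and follows the paper's proof essentially step for step. The first equality comes from Lemma~\ref{fechos normais} together with the ERF-closedness of $H^G$. The second comes from the inclusion $\overline{H}^{\widehat{G}} \le \overline{\overline{H}^{\widehat{G}}}$, and the reverse inclusion, which the paper simply calls immediate, you justify explicitly via the generators $h^g$.
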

\begin{proof}
Since $H^G \le G$ and $G$ is ERF, $H^G$ is closed in the profinite topology of $G$, so $\overline{H^G} \cap G = H^G$. By Lemma \ref{fechos normais}, $\overline{H^G} = \overline{\overline{H}^{\widehat{G}}}$, hence $\overline{\overline{H}^{\widehat{G}}} \cap G = H^G$. Since $\overline{H}^{\widehat{G}} \subseteq \overline{\overline{H}^{\widehat{G}}}$, we obtain $\overline{H}^{\widehat{G}} \cap G \subseteq H^G$. The reverse inclusion is immediate. Therefore $\overline{H}^{\widehat{G}} \cap G = H^G$.
\end{proof}

Let $G$ be a profinite group. 
The upper central
series $(Z_i(G))_{i \geq 0}$ and the lower central series
$(\gamma_n(G))_{n \geq 1}$ of $G$ are defined, respectively, by
$$Z_0(G) = \{1\}, \quad Z_{i+1}(G)/Z_i(G) = Z(G/Z_i(G))
\quad (i \geq 0),$$
$$\gamma_1(G) = G, \quad \gamma_{n+1}(G) = \overline{[\gamma_n(G), G]}
\quad (n \geq 1),$$
where the bar denotes the closure in $G$. We say that $G$ is nilpotent
of class at most $r$ if $\gamma_{r+1}(G) = \{1\}$, equivalently, if
$Z_r(G) = G$. We denote by \(\gamma_n^{abs}(G)\) 
the abstract a nalogue, defined by \(\gamma_1^{abs}(G) = G\) and 
\(\gamma_{n+1}^{abs}(G) = [\gamma_n^{abs}(G), G]\) without taking closures, so 
that \(\gamma_n^{abs}(G) \leq \gamma_n(G)\). We denote by \(\widehat{\mathfrak{N}}_r\) the class of nilpotent profinite groups of class at most \(r\), and by \(\mathfrak{N}_r\) its abstract analogue. 

The next proposition is a well-known result about subnormality, the abstract version of which can be found in Hall \cite[p.~154]{hall 2}. Although the proof for the profinite case is analogous, we include it here for completeness, as a direct reference seems unavailable.

\begin{proposition} \label{defecttwo} 
If $G$ is a nilpotent profinite group of class $k$, then every closed subgroup of $G$ has subnormal defect at most $k$. 
\end{proposition}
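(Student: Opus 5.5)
The plan is to mimic Hall's abstract argument, using the chain $H_i = \overline{H\,\gamma_{i}(G)}$ built from the (closed) lower central series. Let $H\leqslant_c G$ and $\gamma_{k+1}(G)=\{1\}$. For $1\le i\le k+1$ set
\[
K_i=H\gamma_{i}(G), \qquad K_1 = G,\ K_{k+1}=H .
\]

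\textbf{Step 1: each $K_i$ is a closed subgroup.} Each $\gamma_i(G)$ is a closed normal subgroup of $G$. The product of a closed subgroup and a closed normal subgroup in a profinite (compact Hausdorff) group is closed, since it is the image of the compact set $H\times\gamma_i(G)$ under multiplication. It is also a subgroup, because $\gamma_i(G)$ is normal. So no extra closure is needed.

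\textbf{Step 2: $K_{i+1}\unlhd K_i$ for $1\le i\le k$.} Since $\gamma_{i+1}(G)\unlhd G$, it suffices to show that each $h\gamma_i(G)$-conjugate of an element $h'n$ lies in $K_{i+1}$, where $h,h'\in H$ and $n\in\gamma_{i+1}(G)$. Conjugation by elements of $H$ preserves $K_{i+1}$, because it preserves both $H$ and $\gamma_{i+1}(G)$. It therefore remains to conjugate by $g\in\gamma_i(G)$. We have $(h'n)^g = h'[h',g]\,n^g$. Here $[h',g]\in[G,\gamma_i(G)] \subseteq \gamma_{i+1}(G)$, using $[x,y]^{-1}=[y,x]$, and $n^g\in\gamma_{i+1}(G)$. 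Hence $(h'n)^g\in H\gamma_{i+1}(G)=K_{i+1}$. This gives the closed chain
\[
H=K_{k+1}\unlhd_c K_k\unlhd_c\cdots\unlhd_c K_1=G,
\]
of length $k$, so $H\unlhd^k_c G$.

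\textbf{Main obstacle.} The only genuinely profinite point is ensuring that the intermediate subgroups are closed. The definition of $\gamma_{i+1}(G)$ as a closure does not affect Step 2, since the abstract commutators $[h',g]$ already lie in it. The compactness argument in Step 1 is the key observation. If one prefers to avoid it, an alternative is to take $\overline{H\gamma_i(G)}$ and note that the normalizer of a closed subgroup is closed, then pass to closures in Step 2. The compactness route is simpler, so I would use it.
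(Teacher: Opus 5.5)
Your proof is correct, and it follows the same template as the paper: multiply $H$ by the terms of a central series of closed normal subgroups, and check normality one step at a time.

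The difference is which central series you use. The paper uses the upper central series, forming the chain $H=HZ_0(G)\unlhd_c HZ_1(G)\unlhd_c\cdots\unlhd_c HZ_k(G)=G$. It argues that $[Z_{j+1}(G),G]\le Z_j(G)$ makes $Z_{j+1}(G)$ normalize $HZ_j(G)$. You use the lower central series $H\gamma_i(G)$ together with $[\gamma_i(G),G]\subseteq\gamma_{i+1}(G)$. The two are interchangeable, since $\gamma_{k+1}(G)=\{1\}$ if and only if $Z_k(G)=G$. Both also rest on the same compactness fact to see that the products are closed.

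Your route has a small advantage. The paper defines $\gamma_i(G)$ as a closure, so its closedness comes for free. The paper's route implicitly needs each $Z_j(G)$ to be closed, which it takes for granted. This is true: the center of a Hausdorff group is closed, and one inducts through the quotients $G/Z_j(G)$. Your element-wise check in Step 2 also makes clear that only abstract commutators occur, so the closure in the definition of $\gamma_{i+1}(G)$ causes no difficulty.

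The notation $H_i=\overline{H\gamma_i(G)}$ in your opening sentence is unnecessary and clashes with the paper's $H_i$ for successive normal closures. Work with $K_i=H\gamma_i(G)$ throughout, as you in fact do.
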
 

\begin{proof} 
Let $H \leqslant_c G$ and consider the upper central series of closed subgroups of $G$:  
\[ \{1\}=Z_{0}(G)\unlhd_{c} Z_{1}(G)=Z(G) \unlhd_{c} \cdots \unlhd_{c} Z_{k-1}(G) \unlhd_{c} Z_{k}(G) = G, \]
where $Z_i(G)/Z_{i-1}(G) = Z(G/Z_{i-1}(G))$ for each $i=1, 2, \ldots, k$.

For any $j \in \{0, 1, \ldots, k-1\}$, the definition of the upper
central series gives $[Z_{j+1}(G), G] \leq Z_j(G)$, and since
$Z_j(G)$ is closed, also $\overline{[Z_{j+1}(G), G]} \leqslant_c
Z_j(G)$. Consequently, $Z_{j+1}(G)$ normalizes the product $HZ_j(G)$.
Since $H$ trivially normalizes $HZ_j(G)$, it follows that
$HZ_{j+1}(G)$ normalizes $HZ_j(G)$. Furthermore, since the product of a
closed subgroup and a closed normal subgroup in a profinite group is
closed, each $HZ_j(G)$ is a closed subgroup. Thus, we obtain the finite chain of closed subgroups:
\[ H = HZ_{0}(G) \unlhd_{c} HZ_{1}(G) \unlhd_{c} \cdots \unlhd_{c} HZ_{k-1}(G) \unlhd_{c} HZ_{k}(G) = G. \]
As a consequence, $H$ is subnormal in $G$ in at most $k$ steps, which means that the subnormal defect of $H$ in $G$ is at most $k$.
\end{proof}

\section{Some classes of profinite groups}

In this section, we introduce specific classes of profinite groups that generalize the abstract group classes investigated by Mahdavianary \cite{mahdavimain}. Thereafter, we prove that non-periodic profinite groups without even-order elements in which all procyclic subgroups are subnormal of defect at most 2 are precisely profinite $2$-Engel groups. This establishes a profinite generalization of Heineken's result \cite[Theorem~1(I)]{heineken}.

For a profinite group $G$, we define some classes as follows:
\begin{enumerate}
    \item[(i)] $G \in \widehat{\mathscr{T}}$ if every procyclic subgroup of $G$ is $2$-subnormal.
    \item[(ii)] $G \in\widehat{\mathscr{A}}$ if every closed abelian subgroup of $G$ is $2$-subnormal.
    \item[(iii)] $G \in\widehat{\mathscr{S}}$ if every closed subgroup of $G$ is $2$-subnormal.
\end{enumerate}

These definitions naturally yield the inclusions $\widehat{\mathscr{S}} \subseteq \widehat{\mathscr{A}} \subseteq \widehat{\mathscr{T}}$. Similarly, Mahdavianary \cite{mahdavimain} defined the corresponding classes $\mathscr{S}, \mathscr{A}, \mathscr{T}$ for abstract groups, focusing on all subgroups, abelian subgroups, and cyclic subgroups, respectively, having subnormal defect at most $2$. Note that if $G$ is a finite group equipped with the discrete topology, then $G \in \widehat{\mathscr{S}}$ if and only if $G \in \mathscr{S}$. The analogous equivalences hold for the classes $\widehat{\mathscr{A}}$ and $\widehat{\mathscr{T}}$.

\bigskip

The abstract case of the following definition was introduced by Cappitt \cite{Cap71}. 

\begin{definition}
A profinite Cappitt group is a profinite group $G$ such that the closed subgroup \(\tilde{S}(G) = \overline{ \langle L \leqslant_c G \mid L \ntriangleleft G \rangle}\) is a proper subgroup of $G$. 
\end{definition}

A direct application of Proposition \ref{defecttwo} yields the following corollary. It shows that (profinite) Cappitt groups belong to $\mathscr{S}$
(respectively, $\widehat{\mathscr{S}}$). 

\begin{corollary}
If $G$ is a Cappitt group (respectively, a profinite Cappitt group), then every subgroup (respectively, closed subgroup) of $G$ has subnormal defect at most $2$.  
\end{corollary}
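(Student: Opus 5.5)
The plan is to show that a (profinite) Cappitt group is nilpotent of class at most $2$, and then apply Proposition~\ref{defecttwo}, or its abstract analogue from Hall \cite[p.~154]{hall 2}.

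Put $S=\tilde{S}(G)$ (in the abstract case, $S$ is the subgroup generated by all non-normal subgroups). First, $S$ is a closed normal subgroup of $G$: conjugation permutes non-normal closed subgroups, so it preserves the generating family and therefore its closure. Next, every closed subgroup $L$ of $G$ is normalized by $S$. If $L\unlhd G$ this is clear. If $L$ is not normal, then $L\le S$. Also, every closed subgroup $K\le S$ is either normal in $G$ or contained in $S$; in either case, the point to extract is that every closed subgroup of $S$ is normal in $S$, so $S$ is a (profinite) Dedekind group.

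We can then get a direct defect-$2$ chain without using nilpotency at all. For any $L\leqslant_c G$, consider
\[
L \unlhd_c LS\ \text{ or }\ L\unlhd_c G .
\]
If $L$ is normal, the defect is at most $1$. Otherwise $L\le S$. Since $S$ is Dedekind, $L\unlhd_c S$, and $S\unlhd_c G$, so $L\unlhd_c S\unlhd_c G$ has defect at most $2$. This gives the corollary directly. It matches the paper's phrase ``direct application'' if one observes that in $S$ every subgroup has defect at most $1$.

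If one insists on invoking Proposition~\ref{defecttwo} literally, one needs $G$ to be nilpotent of class at most $2$. I expect this to be the main obstacle. One would try to show that every element outside $S$ acts on $S$ as a power automorphism, and that $G/S$ is abelian, since every subgroup containing $S$ is normal in $G$ (a non-normal one would lie in $S$). That class-$2$ bound is not generally true, however, so the chain argument above is the one I would present. In the profinite case, the only care needed is that $S$ is closed by definition and that all the intermediate subgroups are closed.
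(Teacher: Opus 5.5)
\textbf{There is a genuine gap: the claim that $S=\tilde S(G)$ is a (profinite) Dedekind group is unsupported, and in fact false.} Equivalently, you need $S$ to normalize every subgroup of $G$, and nothing you wrote gives this. The definition only says that subgroups \emph{not} contained in $S$ are normal in $G$; from ``$L\le S$'' you cannot conclude that $S$ normalizes $L$.

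Here is a counterexample. Let $p$ be odd and let $H=\langle x,t\rangle$ be extraspecial of order $p^3$ and exponent $p$, with $z=[x,t]$. Let $G=H\circ C_{p^2}$ be the central product with $C_{p^2}=\langle g\rangle$ and $g^p=z$. This is a finite group, so it is also profinite with the discrete topology.
\begin{itemize}
\item Here $G'=\langle z\rangle$ has order $p$ and $Z(G)=\langle g\rangle$. So a subgroup is normal if and only if it is central or contains $z$.
\item Every element outside $H$ has the form $x^at^bg^c$ with $p\nmid c$, and its $p$-th power is $z^c$. Hence every subgroup not contained in $H$ contains $z$ and is normal.
\item On the other hand, $\langle x\rangle$ and $\langle t\rangle$ are not normal.
\end{itemize}
Therefore $\tilde S(G)=H\neq G$, and $G$ is a (profinite) Cappitt group. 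But $x^t=xz\notin\langle x\rangle$, so $\langle x\rangle$ is not normal in $S=H$, and your chain $\langle x\rangle\unlhd S\unlhd G$ does not exist. The subgroup $\langle x\rangle$ does have defect $2$, but through $\langle x\rangle\unlhd\langle x,z\rangle\unlhd G$, that is, because $G'\le Z(G)$.

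Your remark that the class-$2$ bound ``is not generally true'' is also wrong. That bound is exactly what the paper uses: Cappitt groups and profinite Cappitt groups are nilpotent of class at most $2$ \cite{Cap71,Por19}. Proposition~\ref{defecttwo} (or Hall's abstract version) then gives $L\unlhd_c L\gamma_2(G)\unlhd_c G$ for every closed $L$.

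In the abstract case the bound is easy to see directly:
\begin{itemize}
\item For $g\notin S$ we have $\langle g\rangle\unlhd G$.
\item For any $x\in G$, the commutator $[x,g]$ lies in $\langle g\rangle$. It also lies in $\langle x\rangle$ if $x\notin S$, or in $\langle xg\rangle$ if $x\in S$, using $[x,g]=[x,xg]$. Either way it commutes with $x$.
\item Writing $g^x=g^{n_x}$, this gives $g^{(n_x-1)^2}=1$ for all $x$.
\item If $g$ has finite order $N$, then $N^2$ divides $((n_x-1)(n_y-1))^2$, so $N$ divides $(n_x-1)(n_y-1)$. If $g$ has infinite order, then $n_x=1$. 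In both cases $[g,x,y]=g^{(n_x-1)(n_y-1)}=1$.
\item Since $G\setminus S$ generates $G$, it follows that $G=Z_2(G)$.
\end{itemize}
The profinite case runs the same way with $\overline{\langle g\rangle}$ and $\widehat{\mathbb Z}$-exponents.
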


\begin{proof}
If $G$ is a Cappitt group or a profinite Cappitt group, then $G$ is known to be nilpotent of class at most $2$ (see \cite{Por19} and \cite{Cap71}). Therefore, by Proposition \ref{defecttwo}, every (closed) subgroup of $G$ has subnormal defect at most $2$.
\end{proof}

\bigskip

\subsection{Profinite groups in $\widehat{\mathscr{T}}$ and $\widehat{\mathscr{A}}$} 

\bigskip 

We begin with a series of auxiliary lemmas that govern the structural behavior of these classes.

\begin{lemma} \label{formula0}
If $G \in \widehat{\mathscr{T}}$, then $G/U \in \mathscr{T}$ for every open normal subgroup $U \unlhd_{o} G$.
\end{lemma}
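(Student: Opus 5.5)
The plan is to take a cyclic subgroup of the finite quotient, lift it to a procyclic subgroup of $G$, and push the closed subnormal chain of length at most $2$ down to $G/U$.

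Fix $U \unlhd_o G$ and write $\pi\colon G \to G/U$ for the canonical projection. The quotient $G/U$ is finite and $\pi$ is a continuous surjective homomorphism. Let $\langle xU\rangle$ be an arbitrary cyclic subgroup of $G/U$, with $x \in G$.

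\textbf{Step 1 (lifting).} Set $H = \overline{\langle x\rangle}$, which is a procyclic closed subgroup of $G$. Since $U$ is open, $HU$ is open, hence closed, and contains $\langle x\rangle U$. Therefore
\[
\pi(H)=HU/U=\langle x\rangle U/U=\langle xU\rangle .
\]
The middle equality holds because $\langle x\rangle U$ is a union of cosets of the open subgroup $U$, so it is closed. It therefore contains $\overline{\langle x\rangle}$.

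\textbf{Step 2 (pushing the chain down).} Since $G\in\widehat{\mathscr{T}}$, there is a chain of closed subgroups $H \unlhd_c K \unlhd_c G$. The image of a normal subgroup under a surjective homomorphism is normal in the image. So $\pi(K)\unlhd G/U$, and $\pi(H)\unlhd \pi(K)$ because $\pi$ restricted to $K$ maps onto $\pi(K)$. This gives
\[
\langle xU\rangle=\pi(H)\unlhd \pi(K)\unlhd G/U ,
\]
so $\langle xU\rangle$ has subnormal defect at most $2$ in $G/U$. As $xU$ was arbitrary, $G/U\in\mathscr{T}$.

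\textbf{Main obstacle.} The only delicate point is Step 1: one must check that the image of the procyclic subgroup $\overline{\langle x\rangle}$ is exactly the cyclic group $\langle xU\rangle$, and not something larger. This follows from the openness of $U$, as shown above. The rest is routine, since normality passes to homomorphic images.
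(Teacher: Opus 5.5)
Your proof is correct and takes essentially the same route as the paper. Both lift $\langle xU\rangle$ to $\overline{\langle x\rangle}$, check that its image in $G/U$ is exactly $\langle xU\rangle$ (the paper does this by noting that $\pi_U$ commutes with closures because $G/U$ is discrete), and push a closed chain of length at most $2$ down along $\pi_U$. The only difference is that the paper uses the specific chain $\overline{\langle x\rangle}\unlhd_c\overline{\overline{\langle x\rangle}^{\,G}}\unlhd_c G$ and identifies the image of the middle term with $\langle xU\rangle^{G/U}$, whereas your arbitrary closed chain avoids that identification.
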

\begin{proof}
Suppose $U \unlhd_{o} G$ and let $L_U = \langle gU \rangle \le G/U$, where $g \in G$. Since $G \in \widehat{\mathscr{T}}$, we have $\overline{\langle g \rangle} \unlhd_c \overline{\overline{\langle g \rangle}^{\,G}}$, hence $\overline{\langle g \rangle}\,U \unlhd_c \overline{\overline{\langle g \rangle}^{\,G}}\, U$. Since $\pi_U \colon G \to G/U$ is continuous and $G/U$ is finite with the discrete topology, every subset of $G/U$ is closed, so $\pi_U$ commutes with closures. Therefore

\[
L_U = \frac{\overline{\langle g \rangle}\,U}{U}
= \langle gU \rangle
\unlhd
\frac{\overline{\overline{\langle g \rangle}^{\,G}}\,U}{U}
= \pi_U\!\left(\overline{\overline{\langle g \rangle}^{\,G}}\right)
= \pi_U\!\left(\overline{\langle g \rangle}^{\,G}\right)^{\,G/U}
= \langle gU \rangle^{\,G/U}
= L_U^{\,G/U}.
\] 

This proves that every cyclic subgroup of $G/U$ is $2$-subnormal; hence $G/U \in \mathscr{T}$.
\end{proof}

\begin{lemma} \label{class3}
If $G \in \widehat{\mathscr{T}}$, then $G \in \widehat{\mathfrak{N}}_{3}$. In particular, every group in $\widehat{\mathscr{T}}$ is a metabelian profinite group.
\end{lemma}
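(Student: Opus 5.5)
The plan is to reduce to finite continuous quotients and then apply Mahdavianary's abstract theorem. Let $G \in \widehat{\mathscr{T}}$ and let $U \unlhd_o G$ be an arbitrary open normal subgroup. By Lemma~\ref{formula0}, the finite group $G/U$ lies in $\mathscr{T}$: every cyclic subgroup of $G/U$ is $2$-subnormal. By Mahdavianary's theorem (Theorem~1 in \cite{mahdavinilpotente}), any abstract group in which every cyclic subgroup is $2$-subnormal is nilpotent of class at most $3$. Hence $\gamma_4^{abs}(G/U)=1$ for every $U \unlhd_o G$.

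Next we lift this to $G$. Since commutators are preserved by the projection $\pi_U$, we have $\pi_U(\gamma_4^{abs}(G)) = \gamma_4^{abs}(G/U) = 1$, so $\gamma_4^{abs}(G) \le U$ for every open normal $U$. Because $U$ is closed, it also follows that $\gamma_4(G)=\overline{[\gamma_3(G),G]} \le U$. To justify this, one checks inductively that $\pi_U(\gamma_n(G)) = \gamma_n(G/U)$. Indeed, $\pi_U$ maps closures onto closures in the discrete finite quotient, because the image of a compact set is closed and $\pi_U(\overline{X}) \subseteq \overline{\pi_U(X)}=\pi_U(X)$. Alternatively, one can note directly that $[\gamma_3(G),G] \subseteq U$ implies $\overline{[\gamma_3(G),G]} \subseteq U$, with the inductive identification handling the earlier terms. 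Since in a profinite group $\bigcap_{U \unlhd_o G} U = \{1\}$, we get $\gamma_4(G)=1$, i.e.\ $G \in \widehat{\mathfrak{N}}_3$.

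For the metabelian claim, recall the standard fact $[\gamma_i,\gamma_j] \le \gamma_{i+j}$. This gives $[G',G'] \le \gamma_4 = 1$ in each finite quotient $G/U$, since $G/U$ has class at most $3$. So $\gamma_2^{abs}(G/U)$ is abelian for every $U$, and hence $[\,\overline{[G,G]},\overline{[G,G]}\,] \subseteq U$ for all $U$. Intersecting over all $U$ shows that $\overline{G'}$ is abelian, so $G$ is metabelian. Working directly in $G$ is equally easy: $[\gamma_2(G),\gamma_2(G)] \le \gamma_4(G)=1$, where the profinite version of the three-subgroup inclusion follows from the abstract one together with continuity.

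The only real content is the appeal to Mahdavianary's abstract theorem, which we take as given. The step needing care is the passage between closures and finite quotients, namely $\pi_U(\gamma_n(G))=\gamma_n(G/U)$. This follows from compactness, since continuous images of closed subsets of a profinite group are closed. Everything else is routine.
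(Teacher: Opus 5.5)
Your proof is correct and follows essentially the same route as the paper. Lemma~\ref{formula0} puts each finite quotient $G/U$ in $\mathscr{T}$, Mahdavianary's (equivalently Garrison--Kappe's) theorem gives class at most $3$, the bound is lifted to $G$, and metabelianity follows from $[\gamma_2(G),\gamma_2(G)]\le\gamma_4(G)=1$.

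The only difference is cosmetic and lies in the lifting step. The paper embeds $G$ as a closed subgroup of $\prod_{U} G/U$ and cites closure of nilpotency class at most $3$ under Cartesian products and closed subgroups. You instead show directly that $\pi_U(\gamma_4(G))=\gamma_4(G/U)=1$ and intersect over all open normal $U$. Your version is self-contained and handles the closures explicitly.
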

\begin{proof}
Let $\mathcal{N} = \{U \mid U \unlhd_{o} G\}$ be the fundamental system of open normal neighborhoods of the identity in $G$. It is a standard result \cite[Theorem~2.1.3]{RZ10} that $G = \varprojlim_{U \in \mathcal{N}} G/U$. By \cite[Proposition~1.1.5]{wilson98}, $G$ embeds as a closed subgroup into the Cartesian product $\prod_{U \in \mathcal{N}} G/U$.  Since $G \in \widehat{\mathscr{T}}$, Lemma \ref{formula0} ensures that $G/U \in \mathscr{T}$ for all $U \in \mathcal{N}$. According to Garrison and Kappe \cite[Theorem~1.3]{kappegar} (or Mahdavianary \cite[Theorem~1]{mahdavinilpotente}), any abstract group in $\mathscr{T}$ belongs to $\mathfrak{N}_{3}$. Thus, $G/U \in \mathfrak{N}_{3}$ for all $U \in \mathcal{N}$. Because the class of nilpotent groups of class at most 3 is closed under Cartesian products and closed subgroups (see Proposition~4.2 of \cite{LIM}), we conclude that $G \in \widehat{\mathfrak{N}}_{3}$. Furthermore, commutator identities imply that $G'' = \overline{[\gamma_2(G), \gamma_2(G)]} \leqslant \gamma_4(G) = \{1\}$, which confirms that $G$ is metabelian.
\end{proof}

\begin{corollary} \label{cor4} 
The following inclusions hold: $\widehat{\mathscr{S}}  \subseteq \widehat{\mathscr{A}} \subseteq \widehat{\mathfrak{N}}_{3}.$
\end{corollary}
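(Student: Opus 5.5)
The corollary follows by chaining containments already available. First, $\widehat{\mathscr{S}} \subseteq \widehat{\mathscr{A}}$ is immediate from the definitions: if every closed subgroup of $G$ is $2$-subnormal, then in particular every closed abelian subgroup is. Likewise $\widehat{\mathscr{A}} \subseteq \widehat{\mathscr{T}}$, because every procyclic subgroup $\overline{\langle g \rangle}$ is a closed abelian subgroup. It is abelian since it is the closure of the abelian subgroup $\langle g \rangle$, and the commutator map is continuous while $G$ is Hausdorff. These are the inclusions already recorded after the definitions of the classes.

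For the last inclusion I would invoke Lemma~\ref{class3}, which gives $\widehat{\mathscr{T}} \subseteq \widehat{\mathfrak{N}}_3$. Composing, $\widehat{\mathscr{A}} \subseteq \widehat{\mathscr{T}} \subseteq \widehat{\mathfrak{N}}_3$, and together with the first inclusion this is exactly the statement.

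There is no real obstacle, since all the substantive work (passing to finite quotients via Lemma~\ref{formula0} and applying the abstract bound of Garrison--Kappe/Mahdavianary) was done in Lemma~\ref{class3}. The only point to check explicitly is that procyclic subgroups are abelian, so that the hypothesis defining $\widehat{\mathscr{A}}$ applies to them.
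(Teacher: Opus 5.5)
Your proposal is correct and is exactly the paper's proof: the definitional chain $\widehat{\mathscr{S}} \subseteq \widehat{\mathscr{A}} \subseteq \widehat{\mathscr{T}}$ composed with Lemma~\ref{class3}. Your explicit check that $\overline{\langle g\rangle}$ is abelian, so that it falls under the hypothesis defining $\widehat{\mathscr{A}}$, is a detail the paper leaves implicit, and your argument for it is sound.
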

\begin{proof} 
By definition, $\widehat{\mathscr{S}}  \subseteq \widehat{\mathscr{A}} \subseteq \widehat{\mathscr{T}}$. By Lemma \ref{class3}, $\widehat{\mathscr{T}} \subseteq \widehat{\mathfrak{N}}_{3}$. The result follows immediately.
\end{proof}

\begin{corollary} \label{G3engel}
If $G \in \widehat{\mathscr{T}}$, then $G$ is a profinite $3$-Engel group.
\end{corollary}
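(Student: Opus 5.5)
The plan is to reduce to the finite quotients, as in the proof of Lemma \ref{class3}. Fix $x,y \in G$. It suffices to show $[x,_3 y]=1$. Since $G$ is profinite, $\bigcap_{U \unlhd_o G} U = \{1\}$, so it is enough to show $[x,_3 y] \in U$ for every $U \unlhd_o G$. Since $\pi_U\colon G\to G/U$ is a homomorphism, $\pi_U([x,_3 y]) = [xU,_3\, yU]$. The goal therefore reduces to showing that every finite quotient $G/U$ is a $3$-Engel group.

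By Lemma \ref{formula0}, $G/U \in \mathscr{T}$, so every cyclic subgroup of the finite group $G/U$ is $2$-subnormal. I would then prove the following abstract fact directly: if every cyclic subgroup of a group $K$ is $2$-subnormal, then $K$ is $3$-Engel.

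Take $a,b\in K$ and $H=\langle b\rangle$. Then $H \unlhd H^K \unlhd K$. Since $[a,b]=(b^{-1})^{a}b \in H^K$, normality of $H$ in $H^K$ gives
\[
[a,b,b] = [[a,b],b] \in [H^K, H] \leq H.
\]
Write $[a,b,b]=b^m$ for some integer $m$. This element commutes with $b$, so
\[
[a,b,b,b] = [b^m,b] = 1.
\]
Hence $K$ is $3$-Engel; in fact this already follows from the first step of the argument.

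Applying the fact to $K=G/U$ gives $[x,_3 y]\in U$ for all $U\unlhd_o G$. Therefore $[x,_3 y]=1$, and $G$ is a profinite $3$-Engel group.

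The argument could equally be carried out directly in $G$, taking $H=\overline{\langle y\rangle}$. Then $[x,y]\in \overline{H^G}$, which normalizes $H$, so $[x,y,y]\in H$. Since $H$ is abelian, $[x,y,y,y]=1$. This avoids the quotient step entirely.

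A third route is through Lemma \ref{class3}, which gives $G\in\widehat{\mathfrak{N}}_3$. Then $[x,_3 y]\in\gamma_4(G)=\{1\}$, because an abstract commutator of weight $4$ lies in $\gamma_4^{abs}(G)\leq\gamma_4(G)$. This is the quickest route.

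I expect no real obstacle. The only point requiring care is the profinite bookkeeping: in the direct argument, that the closure $\overline{H^G}$ still normalizes $H$; in the quotient argument, that commutators commute with the projection maps $\pi_U$. Both are immediate, by Remark \ref{aremark} and the fact that each $\pi_U$ is a homomorphism, respectively.
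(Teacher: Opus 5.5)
Your proposal is correct. Your third route is exactly the paper's proof: the paper invokes Lemma~\ref{class3} to get $\gamma_4(G)=\{1\}$ and observes that $[v,_3u]$ is a commutator of weight $4$.

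Your main argument is genuinely different and more elementary. This holds both in the finite-quotient form and in the direct form with $H=\overline{\langle y\rangle}$. From a chain $H\unlhd_c L\unlhd_c G$ you get $[x,y]=(y^{-1})^x y\in L$, so $[x,y]$ normalizes $H$ and $[x,y,y]\in H$. Since the procyclic group $H$ is abelian and contains $y$, it follows that $[x,y,y,y]=1$.

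This uses nothing beyond the definition of $\widehat{\mathscr{T}}$; it is essentially the easy half of Lemma~\ref{lemma1}. The paper's route instead rests on the nilpotency bound of Lemma~\ref{class3}. That bound in turn depends on the Garrison--Kappe/Mahdavianary theorem that abstract groups in $\mathscr{T}$ have class at most $3$, and on closure of that class under Cartesian products and closed subgroups.

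What each buys you:
\begin{itemize}
\item The paper's version is a one-liner once that machinery is in place, and it shows more: every commutator of weight $4$ vanishes, not just the Engel ones.
\item Your version is self-contained and, in its direct form, avoids the passage to finite quotients entirely. It also shows that the $3$-Engel property needs only that each procyclic subgroup be normalized by some closed normal subgroup containing it, with no appeal to nilpotency.
\end{itemize}
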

\begin{proof} 
By Lemma \ref{class3}, $\gamma_4(G) = 1$. Thus every commutator of weight $4$ is trivial; 
in particular, $[v, _3 u] = [v, u, u, u] = 1$ for all $u, v \in G$. 
\end{proof}

The identity in the following lemma corresponds to Heineken \cite[Lemma~1]{heineken2}, adapted to standard modern notation.

\begin{lemma} \label{lemmarefalemao} 
If $G$ is a profinite $3$-Engel group, then $[a, [b,a], [b,a]] = 1$ for all $a,b \in G.$
\end{lemma}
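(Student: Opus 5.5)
The identity $[a,[b,a],[b,a]]=1$ is a law in the abstract sense. It only involves finitely many group operations on the two elements $a,b$, with no closures or limits. My plan is therefore to reduce the statement to the abstract case and then prove the abstract identity.

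First, a profinite $3$-Engel group $G$ is, as an abstract group, a $3$-Engel group, since $[x,_3y]=1$ holds for all $x,y\in G$. Hence it suffices to show that $[a,[b,a],[b,a]]=1$ holds for all elements $a,b$ of an arbitrary abstract $3$-Engel group. For this one may simply invoke Heineken \cite[Lemma~1]{heineken2}, but I would also indicate why it holds.

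Second, for the abstract argument, I would place both $a$ and $c:=[b,a]$ inside the normal closure $\langle a\rangle^G$. Clearly $a\in\langle a\rangle^G$. Also $[a,b]=a^{-1}a^b\in\langle a\rangle^G$, so $c=[b,a]=[a,b]^{-1}\in\langle a\rangle^G$ as well. I would then use Heineken's structural theorem on $3$-Engel groups: the normal closure of any element is nilpotent of class at most $2$. Granting this, $[a,c,c]$ is a commutator of weight $3$ in elements of $\langle a\rangle^G$, and so it vanishes. This is exactly $[a,[b,a],[b,a]]=1$.

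The main obstacle is avoiding the appeal to the class-$2$ normal closure theorem, which is deeper than the lemma being proved. If a self-contained computation is preferred, I would work in the two-generator subgroup $\langle a,a^b\rangle=\langle a,c\rangle$ (note $a^b=a[a,b]=ac^{-1}$). The plan there is as follows.

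1. Apply the $3$-Engel law $[x,_3y]=1$ with suitable substitutions such as $y=a$, $y=ac^{-1}$, and $x=c$, $x=b$.
2. Expand the results with the product formulas of Lemma~\ref{prop comutadores} (items 4--5).
3. Use the Hall--Witt identity to compare $[a,c,c]$ with $[c,a,a]$-type terms. The latter are killed by $[b,a,a,a]=1$, since $[c,a,a]=[b,a,a,a]$.

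I expect this bookkeeping to be the delicate step, which is why I would first rely on the citation and present the normal-closure argument as the conceptual reason.
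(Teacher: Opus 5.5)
Your proof is correct but takes a different route from the paper. The reduction to the abstract identity is fine; the paper's computation is purely algebraic as well. The normal-closure argument is a valid proof once you grant Heineken's theorem that $\langle x\rangle^G$ has class at most $2$ in a $3$-Engel group, since $a$ and $c=[b,a]=(a^{-1})^b a$ both lie in $\langle a\rangle^G$.

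The paper instead proves the identity directly from two instances of the Engel law. Put $d=(a^{-1})^b$, so that $c=da$ and $d=ca^{-1}$.

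First, $[b,a,a,a]=[c,a,a]=1$ says that $[c,a]$ commutes with $a$, hence so does $[a,c]=[c,a]^{-1}$.

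Second, $d^{-1}=a^b$ gives $[b^{-1},d]=ad$, and the law $[b^{-1},_3 d]=1$ yields
\[
1=[b^{-1},d,d,d]=[ad,d,d]=\bigl[[a,d]^d,d\bigr]=[a,d,d]^d .
\]
Since $[a,d]=[a,ca^{-1}]=[a,c]^{a^{-1}}=[a,c]$, the element $[a,c]$ also commutes with $d$, hence with $c=da$. The paper packages this as an expansion of $[a,c,(a^{-1})^b a]$, but this is its content.

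What this really shows is that $[a,c]$ is central in $\langle a,a^b\rangle$. So the lemma says exactly that the two-generator subgroup $\langle a,a^b\rangle$ has class at most $2$. That is the small fragment of Heineken's normal-closure theorem you actually use.

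Your route is more conceptual, but it imports a much deeper theorem whose proof rests on identities of precisely this kind. Citing \cite[Lemma~1]{heineken2} is simply citing the statement itself. The paper's route takes a few lines and is self-contained.

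Your fallback sketch is too vague to count as a proof. The concrete idea it is missing is the substitution $x=b^{-1}$, $y=(a^{-1})^b$ in $[x,_3 y]=1$. No Hall--Witt step is needed.
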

\begin{proof} 
First, we expand the commutator as follows:
\begin{equation} \label{eq 3.6}
    [a,[b,a],[b,a]] = [a,[b,a], (a^{-1})^b a] = [a,[b,a],a] \cdot [a,[b,a],(a^{-1})^b]^a. 
\end{equation} 
Since $G$ is $3$-Engel, $[b,_3 a] = [[[b,a],a],a] = 1$. Utilizing Lemma \ref{prop comutadores}\eqref{comut inverso}, we find:
\begin{equation} \label{usar no fim} 
    [b, a, a, a] = 1 \Longleftrightarrow [[a,[b,a]],a] = 1 \Longleftrightarrow [[a,[b,a]],a^{-1}] = 1.
\end{equation}
Therefore, the first term on the right-hand side of \eqref{eq 3.6} vanishes, and it suffices to prove that $[a,[b,a],(a^{-1})^b]^a = 1$.

Let $x, y \in G$. The $3$-Engel condition ensures that
\[ 1 = [[[y^{-1},x],x],x] = [[(x^{-1})^{y^{-1}}x,x],x] = [[(x^{-1})^{y^{-1}},x]^x [x,x],x] = [[(x^{-1})^{y^{-1}},x]^x,x]. \] 
This simplifies to:
\begin{equation} \label{eq 3.8} 
    [[[(x^{-1})^{y^{-1}},x],x]]^x=1 \Longleftrightarrow [[[(x^{-1})^{y^{-1}},x],x]]=1.
\end{equation} 
By substituting $x = (a^{-1})^{b}$ and $y=b$ into \eqref{eq 3.8} (so that \((x^{-1})^{y^{-1}} = a\)), we obtain the identity:
\[ 1 = [a, (a^{-1})^b, (a^{-1})^b]. \]
Expanding this applying Lemma \ref{prop comutadores} \eqref{comut conjugado} and \eqref{comut produto dir}, we arrive at:
\begin{align} \label{eq 3.9}
    1 &= [a, (a^{-1})^b, (a^{-1})^b] = [[a, [b,a]a^{-1}], (a^{-1})^b] \nonumber \\ 
      &= [[a,a^{-1}] \cdot [a,[b,a]] \cdot [a,[b,a], a^{-1}], (a^{-1})^b] \nonumber \\
      &= [[a,[b,a]] \cdot [a,[b,a], a^{-1}], (a^{-1})^b].
\end{align}
Substituting the result of \eqref{usar no fim} into \eqref{eq 3.9} yields
\[ 1 = [a, [b,a], (a^{-1})^b], \]
which completes the proof.
\end{proof}

The following result is the profinite adaptation of Lemma 1 in \cite{heineken}.

\begin{lemma} \label{lemma1}
Let $G$ be a profinite group. Then $G \in \widehat{\mathscr{T}}$ if and only if $[v,u,u] \in \overline{\langle u \rangle}$ for all $u, v \in G$.
\end{lemma}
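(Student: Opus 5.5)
We prove the two implications separately. Throughout, write $C=\overline{\langle u\rangle}$ for $u\in G$. By Remark \ref{aremark}, $C\unlhd^2_c G$ holds if and only if $C\unlhd_c \overline{C^G}$, and this holds if and only if $\overline{[G,C]}$ normalizes $C$. So the whole statement reduces to understanding when the commutators $[v,u]$ normalize $C$.

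\emph{($\Rightarrow$).} Assume $G\in\widehat{\mathscr{T}}$ and let $u,v\in G$. Then $[v,u]\in [G,C]$, so $[v,u]$ normalizes $C$. Hence $u^{[v,u]}\in C$, and therefore $[u,[v,u]]=u^{-1}u^{[v,u]}\in C$. It remains to convert this into $[v,u,u]$. By Lemma \ref{prop comutadores}(6) we have $[v,u,u]=[[v,u],u]=[u,[v,u]]^{-1}$, which lies in $C$. This direction is routine.

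\emph{($\Leftarrow$).} Assume $[v,u,u]\in C$ for all $u,v$. We must show that $\overline{[G,C]}$ normalizes $C$. Since $N_G(C)$ is closed, it suffices to show that a dense subset of generators of $[G,C]$ normalizes $C$.
\begin{enumerate}
\item First I show that $[v,u]$ normalizes $C$. Reversing the computation above gives $u^{[v,u]}=u[u,[v,u]]\in C$. As conjugation by $[v,u]$ is continuous and $C$ is the closure of $\langle u\rangle$, this yields $C^{[v,u]}\le C$. Applying the same argument to $[v,u]^{-1}$, or using that a closed subgroup conjugated into itself in a profinite group is conjugated onto itself, gives equality.
\item Next I pass from $[v,u]$ to $[v,u^n]$. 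The elements $[v,u^n]$ for $v\in G$, $n\in\mathbb{Z}$, together with their conjugates, generate a dense subgroup of $[G,C]$; this uses continuity of $(v,c)\mapsto[v,c]$ and density of $\langle u\rangle$ in $C$. Expanding $[v,u^n]$ via Lemma \ref{prop comutadores}(5) as a product of conjugates $[v,u]^{u^k}$, each factor normalizes $C$, because $u^k\in C$ normalizes $C$. Conjugates $[v,c]^g=[v^g,c^g]$ are handled the same way, applying the hypothesis to the element $c^g$ in place of $u$; since $\overline{\langle c^g\rangle}=\overline{\langle c\rangle}^g$, this is again a statement about normalizing the corresponding procyclic subgroup.
\end{enumerate}

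The main obstacle is the step from $[v,u]$ to $[v,c]$ for an arbitrary $c\in C$. Here I would use that $\overline{\langle c\rangle}\le C$, together with the fact that $[v,u^n]$ normalizes $C$ as shown in step 2. Then I pass to the limit: $[v,u^{n_i}]\to[v,c]$ and $N_G(C)$ is closed, so $[v,c]\in N_G(C)$. Consequently $[G,C]\le N_G(C)$, hence $\overline{[G,C]}\le N_G(C)$, and Remark \ref{aremark} gives $C\unlhd_c\overline{C^G}\unlhd_c G$. Thus $G\in\widehat{\mathscr{T}}$.
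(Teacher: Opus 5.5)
Your proof is correct, but it is built around commutators $[v,u]$ rather than the conjugates $u^v$ that the paper uses.

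\textbf{The paper's route.} In the forward direction it rewrites $([v,u,u]^{-1})^{[u,v]}$ as $(u^{u^v})^{-1}u$ and then has to undo the conjugation by $[u,v]$. In the converse it applies the hypothesis to $u^{-1}$ to get $[u,[u,v]]=[v,u^{-1},u^{-1}]^{u^2}\in\overline{\langle u\rangle}$. Hence $u^{u^v}\in\overline{\langle u\rangle}$, so the generators $u^v$ of $\langle u\rangle^G$ normalize $C$.

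\textbf{Your route.} You use Remark~\ref{aremark}, and the single identity $u^{[v,u]}=u[u,[v,u]]=u[v,u,u]^{-1}$ drives both directions. This makes your forward direction shorter and avoids the $u^{-1}$ trick. You also state explicitly that $C^{g}\le C$ forces $C^{g}=C$ for a closed subgroup of a profinite group (compare orders in the finite quotients $CU/U$); the paper passes over this point. That finite-quotient argument is the justification that actually works in your Step 1. Your other option, ``the same argument applied to $[v,u]^{-1}=[u,v]$'', is not literally the same computation.

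\textbf{Unnecessary steps.} Your Step 2 and the limiting argument can be skipped. Once $[v,u]\in N_G(C)$ for all $v$, you have $u^v=u[v,u]^{-1}\in N_G(C)$. So $\langle u\rangle^G\le N_G(C)$, and since $N_G(C)$ is closed, $\overline{C^G}=\overline{\langle u\rangle^G}\le N_G(C)$. This is exactly where the paper concludes.

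\textbf{A sentence to drop.} Remove the sentence about the conjugates $[v,c]^g$. Applying the hypothesis to $c^g$ tells you something about $\overline{\langle c^g\rangle}$, not about $C$. It does no harm only because $[G,C]$ is generated by the elements $[v,c]$ themselves. Your last paragraph (continuity of $x\mapsto[v,x]$ and closedness of $N_G(C)$) already places all of them in $N_G(C)$.
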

\begin{proof}
For any $u,v \in G$, we have the basic identity
\begin{equation} \label{formuladomahdavi}
    u^{u^v} = u\,[u,[u,v]]
    \;\Longleftrightarrow\;
    (u^{u^v})^{-1} u = [u,[u,v]]^{-1}=[u,v , u].
\end{equation}

Suppose $G \in \widehat{\mathscr{T}}$, meaning that
$\overline{\langle u \rangle} \unlhd_c \overline{\overline{\langle u \rangle}^{\,G}}$
for all $u \in G.$ Utilizing \eqref{formuladomahdavi} and Lemma \ref{prop comutadores}\eqref{comut tirar inverso}, we obtain
\[
  ([v,u,u]^{-1})^{[u,v]}
  = [u,[v,u]]^{[u,v]}
  = [u,[v,u]]^{[v,u]^{-1}}
  = [u,v,u]
  = (u^{u^v})^{-1} u.
\]
Since $u^{u^g} \in \overline{\langle u \rangle}$ for every $g \in G$, it follows that
\[
  ([v,u,u]^{-1})^{[u,v]}
  = [u,v,u]
  = (u^{u^v})^{-1} u
  \in \overline{\langle u \rangle}.
\]
Every conjugate \(u^g\) of \(u\) lies in \(N_G(\overline{\langle u \rangle})\). Since these conjugates generate \(\langle u \rangle^G\) and \(N_G(\overline{\langle u \rangle})\) is a subgroup, \(\langle u \rangle^G \subseteq N_G(\overline{\langle u \rangle})\). As \(N_G(\overline{\langle u \rangle})\) is closed, it contains \(\overline{\langle u \rangle^G}\). From the fact that \(\overline{\langle u \rangle}^G \subseteq \overline{\langle u \rangle^G}\), we get \(\overline{\langle u \rangle}^G \subseteq N_G(\overline{\langle u \rangle})\). Given that \([v,u]=(u^{-1})^v u\in\overline{\langle u\rangle}^G\), conjugation by \([v,u]\) preserves \(\overline{\langle u\rangle}\). Conjugating \(([v,u,u]^{-1})^{[u,v]}\in\overline{\langle u\rangle}\) by \([v,u]=[u,v]^{-1}\) gives \([v,u,u]^{-1}\in\overline{\langle u\rangle}\), hence \([v,u,u]\in\overline{\langle u\rangle}\). 

Conversely, assume that $[v,u,u] \in \overline{\langle u \rangle}$ for all $u, v \in G$. Applying commutator identities (see Lemma \ref{prop comutadores}), we derive:
\begin{align} \label{eqparadesconjugar}
    ([u,v,u]^{-1})^{u^{-2}} &= [u,[u,v]]^{u^{-2}} = \left([u,[u,v]]^{u^{-1}}\right)^{u^{-1}} = [[u,v],u^{-1}]^{u^{-1}} \nonumber \\ 
    &= [[u,v]^{u^{-1}},(u^{-1})^{u^{-1}}] = [[v,u^{-1}],u^{-1}] \nonumber \\
    &= [v, u^{-1}, u^{-1}] \in \overline{\langle u^{-1} \rangle} = \overline{\langle u \rangle}.
\end{align}
Conjugating \eqref{eqparadesconjugar} by $u^2$ yields
\[ [u,[u,v]] = [u,v,u]^{-1} = [v, u^{-1}, u^{-1}]^{u^2} \in \overline{\langle u \rangle}. \]

Finally, observing that $u \cdot [u,[u,v]] = u^{u^v}$ for every $v \in G$, we find that $u^{u^v} \in \overline{\langle u \rangle}$. Hence every conjugate of $u$ lies in $N_G(\overline{\langle u \rangle})$, and since these conjugates generate \(\langle u \rangle^G\), it follows that \(\overline{\langle u \rangle} \trianglelefteq \langle u \rangle^G\). Since \(N_G(\overline{\langle u \rangle})\) is closed, we get \(\overline{\langle u \rangle^G} \subseteq N_G(\overline{\langle u \rangle})\), i.e.\ \(\overline{\langle u \rangle} \trianglelefteq \overline{\langle u \rangle^G}\). As \(\overline{\langle u \rangle^G} = \overline{\overline{\langle u \rangle}^{\,G}}\), we obtain \(\overline{\langle u \rangle} \trianglelefteq_c \overline{\overline{\langle u \rangle}^{\,G}}\), so \(G \in \widehat{\mathscr{T}}\).
\end{proof}

\begin{remark} \label{zeta}
Let \(G\) be a profinite group and \(a\in G\). Since $\widehat{\mathbb{Z}}$ is the free profinite group on $\{1\}$, there is a unique continuous epimorphism $\zeta\colon\widehat{\mathbb{Z}}\to\overline{\langle a\rangle}$ with $\zeta(1)=a$ (see \cite[Section~4.1]{RZ10}). Since $\zeta$ is surjective, every element of $\overline{\langle a\rangle}$ is of the form $a^{\lambda}=\zeta(\lambda)$ for some $\lambda\in\widehat{\mathbb{Z}}$. 
\end{remark}

The following lemma adapts Heineken \cite[Lemma~2]{heineken2} to the profinite framework.

\begin{lemma} \label{olambda}
If $G \in \widehat{\mathscr{T}}$, then $[b,a,a^{\lambda}]=[b,a,a]^{\lambda}$ for all $a,b \in G$ and all $\lambda \in \widehat{\mathbb{Z}}$.    
\end{lemma}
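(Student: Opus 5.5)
The plan is to prove the identity first for ordinary integers $\lambda=n\in\mathbb{Z}$, using only that $G$ is nilpotent of class at most $3$, and then extend it to all of $\widehat{\mathbb{Z}}$ by a density and continuity argument.

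Fix $a,b\in G$ and put $c=[b,a]\in\gamma_2(G)$. By Lemma~\ref{class3} we have $G\in\widehat{\mathfrak{N}}_3$, so $\gamma_4(G)=\{1\}$. Hence every commutator $[c,g]$ with $g\in G$ lies in $\gamma_3(G)\le Z(G)$. Define $\varphi\colon G\to Z(G)$ by $\varphi(g)=[c,g]$. By Lemma~\ref{prop comutadores}\eqref{comut produto dir},
\[
\varphi(gh)=[c,gh]=[c,h]\,[c,g]^{h}=[c,h]\,[c,g]=\varphi(g)\varphi(h),
\]
where $[c,g]^h=[c,g]$ because $[c,g]$ is central. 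So $\varphi$ is a homomorphism into an abelian group, and in particular $\varphi(a^n)=\varphi(a)^n$ for all $n\in\mathbb{Z}$; negative exponents are automatic for a homomorphism. This gives $[b,a,a^n]=[b,a,a]^n$ for every integer $n$. The membership $[b,a,a]\in\overline{\langle a\rangle}$ from Lemma~\ref{lemma1} is not needed here, only the class bound.

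For general $\lambda\in\widehat{\mathbb{Z}}$, I would use Remark~\ref{zeta}. Let $\zeta_a\colon\widehat{\mathbb{Z}}\to\overline{\langle a\rangle}$ and $\zeta_d\colon\widehat{\mathbb{Z}}\to\overline{\langle d\rangle}$ be the continuous homomorphisms with $\zeta_a(1)=a$ and $\zeta_d(1)=d$, where $d=[b,a,a]$. By definition $a^\lambda=\zeta_a(\lambda)$ and $d^\lambda=\zeta_d(\lambda)$. The two maps $\lambda\mapsto[c,\zeta_a(\lambda)]$ and $\lambda\mapsto\zeta_d(\lambda)$ are continuous from $\widehat{\mathbb{Z}}$ to $G$, since multiplication and inversion in $G$ are continuous. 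By the previous paragraph they agree on $\mathbb{Z}$, which is dense in $\widehat{\mathbb{Z}}$, and $G$ is Hausdorff, so they agree everywhere. Equivalently, $\varphi$ is continuous, so $\varphi\circ\zeta_a$ and $\zeta_d$ are continuous homomorphisms $\widehat{\mathbb{Z}}\to G$ that agree at the topological generator $1$, hence coincide.

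The only step needing care is this passage from $\mathbb{Z}$ to $\widehat{\mathbb{Z}}$, that is, checking that $a^\lambda$ and $d^\lambda$ are defined through the continuous maps of Remark~\ref{zeta} so that the density argument applies. The algebraic part is immediate once one notes that $\gamma_3(G)$ is central.
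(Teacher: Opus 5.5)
Your proof is correct, and it takes a different route from the paper's in both of its steps.

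\emph{Integer case.} The paper first uses Lemma~\ref{lemma1} to write $[b,a,a]=a^{\mu}$ with $\mu\in\widehat{\mathbb{Z}}$, so that $[b,a,a]$ centralizes $\overline{\langle a\rangle}$. It then runs an induction for $n>0$ and a separate computation for $n<0$. You use only $\gamma_4(G)=\{1\}$, from Lemma~\ref{class3}, to see that $g\mapsto[[b,a],g]$ is a continuous homomorphism into $Z(G)$. That covers all of $\mathbb{Z}$ at once.

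\emph{Passage to $\widehat{\mathbb{Z}}$.} The paper chooses integers $n_i\to\lambda$, relying on metrizability of $\widehat{\mathbb{Z}}$. It computes $[b,a,a]^{n_i}=a^{\mu n_i}\to a^{\mu\lambda}=[b,a,a]^{\lambda}$. Separately, it shows $[b,a,a^{n_i}]\to[b,a,a^{\lambda}]$ using that conjugation by $[b,a]$ restricts to a continuous automorphism of $\overline{\langle a\rangle}$. You replace all of this with one observation: $\varphi\circ\zeta_a$ and $\zeta_d$ are continuous homomorphisms $\widehat{\mathbb{Z}}\to G$ that agree at $1$, hence coincide.

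\emph{What each buys.} Your version is shorter and needs neither sequences nor the representation $[b,a,a]=a^{\mu}$. It shows the identity holds in every profinite group of nilpotency class at most $3$, independently of Lemma~\ref{lemma1}. The paper's version adapts Heineken's argument. Its integer step uses only the local condition that $[b,a,a]$ commutes with $a$. It also keeps the relation $[b,a,a]=a^{\mu}$ in view, which is exactly what Step~1 of Proposition~\ref{G2engeldemais} exploits.
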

\begin{proof} 
By Corollary \ref{G3engel}, $G$ is a $3$-Engel profinite group. If $[b,a,a]=1$, the result is trivial since $[b,a] \in C_G\left(\overline{\langle a \rangle}\right).$ Assume hereafter that $[b,a,a] \neq 1$. Since $[b,a,a] \in \overline{\langle a \rangle}$ by Lemma \ref{lemma1}, we may write $[b,a,a] = a^{\mu}$ for some $\mu \in \widehat{\mathbb{Z}}$ not in $\ker(\zeta)$. 

By induction on $n \in \mathbb{N}$, one can verify that $[b,a,a^{n}]=[b,a, a]^{n}$ for all $a,b \in G$. For a negative integer $n=-m$ ($m>0$), we observe that since $[b,a,a]$ belongs to $C_{G}(\overline{\langle a \rangle})$,
\begin{align*}
    [b,a,a^{-m}] &= a^m \cdot a^{-m} \cdot \left( [b,a]^{-1} a^m [b,a] a^{-m}\right) = a^{m} \left([b,a,a^m]\right)^{-1} a^{-m} \\
    &= a^{m} [b,a,a]^{-m} a^{-m} = \left( a [b,a,a] a^{-1} \right)^{-m} = [b,a,a]^{-m}.
\end{align*}
Thus, the property holds for all $n \in \mathbb{Z}$. 

Take $\lambda \in \widehat{\mathbb{Z}}$. Since $\widehat{\mathbb{Z}} = \overline{\mathbb{Z}}$ is metrizable, take a sequence of integers $(n_i)_{i\in\mathbb{N}}$ converging to $\lambda$ in $\widehat{\mathbb{Z}}$. Since $\zeta$ is continuous, $a^{n_i}\to a^{\lambda}$ (see \cite[Lemma~4.1.1]{RZ10}) in \(\overline{\langle a \rangle}\). Consequently, for every \(i\),
\begin{equation} \label{eq:pot}
[b,a,a]^{n_i} = (a^{\mu})^{n_i} = a^{\mu n_i}.
\end{equation}
Since $n_i\to \lambda$ in $\widehat{\mathbb{Z}}$ and multiplication in $\widehat{\mathbb{Z}}$ is continuous, we have $\mu n_i \to \mu\lambda$ in $\widehat{\mathbb{Z}}$. Applying Lemma~4.1.1(a) to the element $a$, we get
\begin{equation} \label{pr lim}
[b,a,a]^{n_i} = a^{\mu n_i} \to a^{\mu\lambda} = (a^{\mu})^{\lambda} = [b,a,a]^{\lambda} \quad \text{in } \overline{\langle a \rangle}.
\end{equation}

On the other hand, from $[b,a,a] = a^{\mu}$ we get \(\left(a^{-1}\right)^{[b,a]} = a^{\mu-1} \in \overline{\langle a \rangle}\), so \([b,a]\) normalizes \(\overline{\langle a \rangle}\). Hence the inner automorphism \(\tau_{[b,a]}\) of \(G\), defined by
\[
\tau_{[b,a]}(x) = [b,a]^{-1} x [b,a], \quad x \in G,
\]
restricts to a continuous automorphism of \(\overline{\langle a \rangle}\). Since \(\tau_{[b,a]}\) is continuous on \(\overline{\langle a \rangle}\), from \(a^{n_i} \to a^\lambda\) we get
\[
\tau_{[b,a]}(a^{n_i}) \to \tau_{[b,a]}(a^\lambda) \quad \text{in } \overline{\langle a \rangle}.
\]
Using the continuity of inversion and multiplication in \(\overline{\langle a \rangle}\), it follows that
\begin{equation} \label{seg lim}
[b,a,a^{n_i}] = [b,a]^{-1}(a^{n_i})^{-1}[b,a]\,a^{n_i} \to [b,a]^{-1}(a^{\lambda})^{-1}[b,a]\,a^{\lambda} = [b,a,a^{\lambda}] \quad \text{in } \overline{\langle a \rangle}.
\end{equation}

Since $[b,a,a]^{n_i} = [b,a,a^{n_i}]$ for all $i$, and both sides converge in \(\overline{\langle a \rangle}\), which is Hausdorff, the limits in \eqref{pr lim} and \eqref{seg lim} must coincide:
\[
[b,a,a]^{\lambda} = [b,a,a^{\lambda}].
\]
\end{proof}

The next lemma outlines essential identities for right \(2\)-Engel elements. A detailed proof is available in Robinson \cite[Theorem~7.13]{chapter7}.

\begin{lemma} \label{identitiesof2engels} 
Let $w$ be a right $2$-Engel element, and let $x, y, z$ be arbitrary elements of a group $G$. Then:
\begin{enumerate}
    \item [\textnormal{(i)}] $w$ is a left $2$-Engel element.
    \item [\textnormal{(ii)}] $[w,x,y]=[w,y,x]^{-1}.$
    \item [\textnormal{(iii)}] $[w, [x,y]]=[w,x,y]^{2}.$
    \item [\textnormal{(iv)}] $[w, [x,y], z]=1.$    
\end{enumerate}
\end{lemma}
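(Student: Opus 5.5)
Since Lemma~\ref{identitiesof2engels} concerns an arbitrary group $G$, the plan is purely algebraic: work with the element $w$ satisfying $[w,g,g]=1$ for all $g\in G$, and derive (i)--(iv) in order. Each later item uses the earlier ones. The basic tool is linearisation of the hypothesis. Replacing $g$ by $xy$ in $[w,g,g]=1$, and expanding with Lemma~\ref{prop comutadores}\eqref{comut produto dir} together with the fact that $w$ commutes with all its conjugates $w^g$, gives (ii): $[w,x,y]=[w,y,x]^{-1}$. Concretely, $[w,g,g]=1$ says $w$ commutes with $w^g$, so the normal closure $\langle w\rangle^G$ is abelian. All the identities should then be read inside the abelian normal subgroup $A=\langle w\rangle^G$, on which $G$ acts by conjugation.

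For (i), the goal is $[g,w,w]=1$, i.e.\ $w$ commutes with $[g,w]=(w^{-1})^g w$. This lies in $A$, which is abelian and contains $w$, so (i) follows immediately. For (ii), write $[w,x]=w^{-1}w^x$ additively in $A$ as $w(x-1)$, using $\mathbb{Z}G$-module notation. The hypothesis becomes $w(g-1)^2=0$ for all $g$. Substituting $g=xy$ and using $(xy-1)=(x-1)+(y-1)+(x-1)(y-1)$ gives
\[
w\bigl[(x-1)(y-1)+(y-1)(x-1)\bigr]+\text{higher terms}=0.
\]
The higher terms must be killed. I would first show $w(x-1)(y-1)(z-1)=0$, which is also the content of (iv) in module form. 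The cleanest route seems to be: from $w(g-1)^2=0$ we get $w\,g^2=w(2g-1)$. Hence $w$ times any element of the augmentation ideal squared is governed by linear terms. Using this, the element $w(x-1)(y-1)$ is anti-symmetric in $x,y$, which is (ii).

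Next, for (iii), $[w,[x,y]]$ corresponds to $w([x,y]-1)=w\,x^{-1}y^{-1}(xy-yx)$. Expanding with the anti-symmetry and the vanishing of triple products yields $2\,w(x-1)(y-1)$, i.e.\ $[w,x,y]^2$. One must be careful that in multiplicative notation $[w,x,y]$ is exactly $w(x-1)(y-1)$ in $A$. This is true because $[w,x]\in A$ and $[a,y]=a(y-1)$ for $a\in A$. Finally, (iv) is $w([x,y]-1)(z-1)=2\,w(x-1)(y-1)(z-1)=0$.

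The main obstacle is proving the triple-product vanishing $w(x-1)(y-1)(z-1)=0$. I would try to obtain it by fully linearising $w(g-1)^2=0$ with $g=xyz$. An alternative is to combine anti-symmetry in each adjacent pair: the expression is then alternating, and moving $x$ past $y$ and then past $z$ produces a sign relation giving $w(x-1)(y-1)(z-1)=-w(x-1)(y-1)(z-1)$ only up to a factor of $2$. This forces a more careful argument, as in Robinson \cite[Theorem~7.13]{chapter7}, using the relation $w\,(y-1)(x-1)\,x = \ldots$ coming from conjugating (ii). I expect this step to require the most care; everything else is bookkeeping in the abelian group $A$.
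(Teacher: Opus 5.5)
\textbf{Gap 1: your proof of (i) is circular.} Your whole argument runs inside $A=\langle w\rangle^G$ viewed as a $\mathbb{Z}G$-module, and the reason you give for $A$ being abelian is wrong. The condition $[w,g,g]=1$ says that $[w,g]$ commutes with $g$. It does not say that $w$ commutes with $w^g$: in $S_3$, $w=(12)$ and $g=(123)$ give $[w,g,g]=1$, yet $w$ and $w^g$ are distinct, non-commuting transpositions. ``$w$ commutes with every $w^g$'' is exactly the left $2$-Engel condition, so you assume (i) in order to prove it.

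The missing step is one line. Since $[w,wg]=[w,g]$, the hypothesis applied to $wg$ gives $1=[w,g,wg]=[w,g,g]\,[w,g,w]^g=[w,g,w]^g$. So $w$ commutes with $[w,g]=w^{-1}w^g$, hence with $w^g$.

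You also need, and never state, that $m(g-1)^2=0$ for every $m\in A$, not just for $m=w$. This holds because $m\mapsto m(g-1)^2$ is additive on $A$ and $A$ is generated by the conjugates of $w$, all of which are right $2$-Engel. Without it you cannot feed $w(x-1)$ back into the relation, and every linearisation needs that.

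\textbf{Gap 2: your key lemma is false.} The step you single out as central, $w(x-1)(y-1)(z-1)=0$ (i.e.\ $[w,x,y,z]=1$), is false in general, and it is not the module form of (iv). Take $H=\langle x,y,z\rangle\cong C_2^3$, $M=\mathbb{F}_2H$, $G=M\rtimes H$ and $w=1\in M$. Each $g\in G$ acts on $M$ through some $h\in H$ with $(h-1)^2=h^2-1=0$, so $w$ is right $2$-Engel. Yet $(x-1)(y-1)(z-1)\neq 0$ in $\mathbb{F}_2H$.

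What the argument actually needs is vanishing of products with a repeated letter, plus a factor $2$. Write $X=x-1$, $Y=y-1$, $Z=z-1$.
\begin{itemize}
\item The substitution $g=xy$ gives $m(XY+YX+XYX+YXY+XYXY)=0$ for all $m\in A$.
\item Applying this to $mX$, and separately multiplying it on the right by $Y$, gives $mXYX=mYXY=-mXYXY$. Hence $m(XY+YX)=mXYXY$.
\item Applying that last identity to $mX$ gives $mXYX=0$. So all higher terms vanish, and (ii) follows.
\item Since $x^{-1},y^{-1}$ act on $A$ as $1-X,1-Y$, expanding gives $m([x,y]-1)=m(XY-YX)=2mXY$. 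This is (iii).
\item By (iii), statement (iv) reads $2\,wXYZ=0$, and this does hold. Apply (ii) to the pair $(xz,y)$ with $xz-1=X+Z+XZ$ to get $m(XZY+YXZ)=0$. Both terms equal $-mXYZ$.
\end{itemize}
So the factor $2$ you were trying to remove is exactly what (iii) supplies; it should be kept, not removed.
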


We shall use the following identity, due to Kappe \cite[Corollary~2.2]{kappe}, to prove that the right-hand \(2\)-Engel element set \(R(G)\) is a closed characteristic subgroup of a profinite group \(G\).  

\begin{lemma} \label{produtodisengel e doisengel}
If $x, y$ are right $2$-Engel elements of a group $G$, then $[yx, g, g] = 1$ and $[g,[g,yx]] = 1$ for all $g \in G.$ 
\end{lemma}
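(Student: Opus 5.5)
We need to prove Kappe's identity: if $x,y$ are right 2-Engel elements, then $[yx,g,g]=1$ and $[g,[g,yx]]=1$ for all $g\in G$. This is a purely abstract statement, and the plan is to use the identities of Lemma \ref{identitiesof2engels} for $x$ and $y$.

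The first step is to expand $[yx,g]$ using Lemma \ref{prop comutadores}\eqref{comut produto esq}:
\[
[yx,g]=[y,g]^x[x,g]=[y,g]\,[y,g,x]\,[x,g].
\]
We then commutate this with $g$, using \eqref{comut produto esq} repeatedly. This yields products of terms of the form $[y,g,g]$, $[y,g,x,g]$, $[x,g,g]$, and cross-commutators between them.

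Next we kill most of these terms. Since $x$ and $y$ are right $2$-Engel, $[y,g,g]=[x,g,g]=1$. By Lemma \ref{identitiesof2engels}(ii), $[y,g,x]=[y,x,g]^{-1}$. The element $[y,g,x]$ commutes with $g$, because $[y,g,x,g]=[y,[g,x]]^{\pm}$-type expressions reduce via (iii),(iv), and $[y,[x,g],g]=1$ by (iv). More simply, $[y,g,x]=[y,x,g]^{-1}$, and $[y,x,g,g]=1$ because $[y,x]\in$ the normal closure and $y$ is left 2-Engel by (i). Finally, $[y,g]$ commutes with $g$, and $[x,g]$ commutes with $g$. Hence each factor of $[yx,g]$ commutes with $g$, so the product does too, giving $[yx,g,g]=1$.

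The step needing care is checking $[[y,g,x],g]=1$. For this we apply Lemma \ref{identitiesof2engels}(ii) with the roles of $x$ and $g$ swapped: $[y,g,x]=[y,x,g]^{-1}$. Then $[[y,x,g]^{-1},g]=1$ iff $[y,x,g,g]=1$, by \eqref{comut inverso} and conjugation. Now $[y,x,g,g]=[[y,x],g,g]$. Here we use that $y$ is a right 2-Engel element, and that right 2-Engel elements form a normal-invariant set whose commutators $[y,x]$ satisfy $[[y,x],g,g]=[y,[x,g]\ldots]$. We would apply (iv) together with (iii): $[y,x,g]^2=[y,[x,g]]$-type rewriting, and $[y,[x,g],g]=1$ by (iv). Any residual $2$-torsion issue would be handled by noting that $[y,x,g]$ commutes with $g$ iff $[y,x,g]^{-1}=[y,g,x]$ does, and then computing $[y,g,x,g]$ directly with the Hall–Witt identity. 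This Hall–Witt verification is the expected main obstacle.

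For the second identity, we use $[g,[g,yx]]=[[yx,g],g]^{\,\cdot}$ up to conjugation and inversion. By \eqref{comut tirar inverso}, $[g,yx]=[yx,g]^{-1}$, and $[g,c^{-1}]$ is trivial iff $[c,g]$ is, by \eqref{comut inverso}. Therefore $[g,[g,yx]]=1$ is equivalent to $[yx,g,g]=1$, which was just proved.
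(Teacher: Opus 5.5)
Your decomposition $[yx,g]=[y,g]\,[y,g,x]\,[x,g]$ is correct, and $[yx,g,g]=1$ would indeed follow once each factor commutes with $g$. Your reduction of $[g,[g,yx]]=1$ to $[yx,g,g]=1$ is also fine.

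The gap is that the only nontrivial factor, $[y,g,x]$, is never shown to commute with $g$. By Lemma~\ref{identitiesof2engels}(ii) this is equivalent to $[y,x,g,g]=1$, i.e.\ to $[y,x]$ behaving as a right $2$-Engel element against $g$. None of your justifications delivers this:
\begin{itemize}
\item Identities (iii) and (iv) only give $[[y,x,g]^2,g]=[y,[x,g],g]=1$, so only the \emph{square} commutes with $g$, as you note yourself.
\item Saying that $[y,x]$ lies in the normal closure of the left $2$-Engel element $y$ only tells you that this normal closure is abelian. An element of an abelian normal subgroup need not be right $2$-Engel.
\item Invoking that right $2$-Engel elements are stable under such commutators is circular. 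Closure of $R(G)$ under products is precisely what this lemma is used for in Proposition~\ref{caracteristico}.
\item The Hall--Witt computation you call the main obstacle is never carried out.
\end{itemize}
So the proof is missing its one substantive step.

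The paper sidesteps this by not splitting $[y,g]^x$. It writes $[[y,g]^x,g]=[[y,g],g^{x^{-1}}]^x$ with $g^{x^{-1}}=g[g,x^{-1}]$. It then suffices that $[y,g]$ commutes with $g$, which is the right $2$-Engel hypothesis, and with $[g,x^{-1}]$, which holds because $[y,g,[g,x^{-1}]]=[y,[g,x^{-1}],g]^{-1}=1$ by (ii) and (iv). Plugged into your framework, this shows that $[y,g]$ and $[y,g]^x$ both commute with $g$, hence so does $[y,g,x]=[y,g]^{-1}[y,g]^x$.

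Alternatively, your ``normal closure'' remark can be made rigorous. By (i), $y$ commutes with all its conjugates, so $A=\langle y\rangle^G$ is abelian, and then $c\mapsto[c,g,g]$ is an endomorphism of $A$. Both $y$ and its conjugate $y^x$ (also right $2$-Engel) lie in its kernel, so $[y,x]=y^{-1}y^x$ does too.
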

\begin{proof} 
Expanding the commutator with the identity in Lemma \ref{prop comutadores}\eqref{comut produto esq} and applying the property that $x$ is a right $2$-Engel element yields:
\[ [yx, g , g]=[[yx, g], g] = [[y,g]^x [x,g], g] = [[y,g]^x, g]^{[x,g]} \cdot [[x,g],g] = [[y,g]^x, g]^{[x,g]}, \, \forall g \in G. \]
Further expansion of the right-hand term provides:
\[ [[y,g]^x, g]^{[x,g]} = [[y,g], g^{x^{-1}}]^{x[x,g]} = [[y,g], g[g, x^{-1}]]^{x[x,g]} = [[y,g], [g, x^{-1}]]^{x[x,g]}, \, \forall g \in G. \]
Applying Lemma \ref{identitiesof2engels} (ii) and (iv), along with the fact that $y$ is a right $2$-Engel element, we obtain:
\[ [yx, g, g]= [[yx, g], g] = [[y,g], [g, x^{-1}]]^{x[x,g]} = \left([y, [g, x^{-1}], g]^{-1}\right)^{x[x,g]} = 1, \forall g \in G. \]
Since $[yx, g]$ commutes with $g$, it is immediate that $[g, [g, yx]] = 1$.
\end{proof}

\begin{proposition} \label{caracteristico}
The set $R(G)$ of right $2$-Engel elements of a profinite group $G$ forms a closed characteristic subgroup. 
\end{proposition}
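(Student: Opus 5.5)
We must show three things: $R(G)$ is a subgroup, it is closed, and it is characteristic. The plan handles them in that order.

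\emph{Subgroup.} First, $1\in R(G)$ trivially. Next, $R(G)$ is closed under products: if $x,y\in R(G)$, Lemma~\ref{produtodisengel e doisengel} gives $[yx,g,g]=1$ for all $g\in G$, so $yx\in R(G)$. For inverses, let $w\in R(G)$. By Lemma~\ref{identitiesof2engels}(i), $w$ is also a left $2$-Engel element, so $[g,w,w]=1$, i.e.\ $w$ commutes with $w^g$ for every $g$. We then compute
\[
[w^{-1},g]=[g,w]^{w^{-1}}
\]
using Lemma~\ref{prop comutadores}\eqref{comut tirar inverso}. Hence
\[
[w^{-1},g,g]=[[g,w]^{w^{-1}},g].
\]
The cleanest route is to use the standard equivalence that $w\in R(G)$ if and only if $w$ commutes with all its conjugates $w^g$. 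Indeed, $[w,g,g]=1$ means $[w,g]$ commutes with $g$. Equivalently, $w^{-1}w^g$ commutes with $g$, which rearranges to $w^{g}$ commuting with $w^{g^2}$ modulo the relation; conjugating shows this is the same as $w$ commuting with $w^{g}$. Since this condition is visibly symmetric under $w\mapsto w^{-1}$, we get $w^{-1}\in R(G)$. If that equivalence proves fiddly, the fallback is to expand $[w^{-1},g,g]$ via Lemma~\ref{identitiesof2engels}(ii)--(iv) and show it vanishes.

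\emph{Closedness.} For each fixed $g\in G$, the map $\phi_g\colon G\to G$, $x\mapsto [x,g,g]$, is continuous, because multiplication and inversion are continuous. Thus $\phi_g^{-1}(1)$ is closed, since $\{1\}$ is closed in the Hausdorff group $G$. Then
\[
R(G)=\bigcap_{g\in G}\phi_g^{-1}(1)
\]
is an intersection of closed sets, hence closed.

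\emph{Characteristic.} Let $\alpha$ be any automorphism of $G$ (continuous or abstract). For $w\in R(G)$ and any $g\in G$,
\[
[\alpha(w),g,g]=\alpha\bigl([w,\alpha^{-1}(g),\alpha^{-1}(g)]\bigr)=1,
\]
so $\alpha(R(G))\subseteq R(G)$. Applying the same argument to $\alpha^{-1}$ gives equality.

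The main obstacle is closure under inverses, since Lemma~\ref{produtodisengel e doisengel} only handles products. Its resolution rests on the reformulation of the right $2$-Engel condition as "$w$ commutes with every conjugate of $w$", and that is the step to check carefully.
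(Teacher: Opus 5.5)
Your treatment of products (via Lemma~\ref{produtodisengel e doisengel}), of closedness (as $\bigcap_g \phi_g^{-1}(1)$) and of the characteristic property is correct and matches the paper.

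\textbf{The gap is in closure under inverses.} The ``standard equivalence'' you invoke, that $w\in R(G)$ if and only if $w$ commutes with all its conjugates, is false. Commuting with all conjugates is exactly the \emph{left} $2$-Engel condition: $[g,w,w]=1$ means that $(w^{-1})^g w$ commutes with $w$. Left $2$-Engel elements need not be right $2$-Engel. In $S_3$, take $x=(1\,2\,3)$ and $t=(1\,2)$. Then $x$ commutes with its only conjugates $x^{\pm1}$. Yet $x^t=x^{-1}$ gives $[x,t]=x^{-2}=x$, and so $[x,t,t]=x\neq 1$.

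Your derivation goes wrong at the ``rearrangement'' step. The statement that $[w,g]=w^{-1}w^g$ commutes with $g$ translates to $w^{g^2}=w^g w^{-1}w^g$, not to $[w^g,w^{g^2}]=1$. So the symmetry under $w\mapsto w^{-1}$ that you observe only shows that the set of left $2$-Engel elements is closed under inverses, which is not what is needed. The ``fallback'' via Lemma~\ref{identitiesof2engels}(ii)--(iv) is not actually carried out, so it does not close the gap.

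\textbf{How to repair it.} The fix is already within reach of the line you wrote, $[w^{-1},g,g]=[[g,w]^{w^{-1}},g]$.
\begin{itemize}
\item Since $w$ is left $2$-Engel (Lemma~\ref{identitiesof2engels}(i)), $[g,w,w]=1$. So $[g,w]$ commutes with $w$, hence with $w^{-1}$, and therefore $[g,w]^{w^{-1}}=[g,w]$.
\item Since $w$ is right $2$-Engel, $[w,g]$ commutes with $g$. Hence so does $[g,w]=[w,g]^{-1}$.
\end{itemize}
Therefore $[w^{-1},g,g]=[[g,w],g]=1$. This is exactly the paper's argument, phrased there as the statement that $C_G(\langle [x,a]\rangle)$ contains $a$, $a^{-1}$ and $x$.
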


\begin{proof} 
We first prove that $R(G)$ is closed. For each $g\in G$, the map \(\rho_g\colon G\to G\) given by \(\rho_g(x)=[x,g,g]\) is continuous, and \(R(G)=\bigcap_{g\in G}\rho_g^{-1}(\{1\})\). Hence \(R(G)\) is closed in \(G\), being an intersection of closed subsets. Also, \(1 \in R(G) \neq \emptyset\). By Lemma \ref{produtodisengel e doisengel}, 
\(R(G)\) is closed under the product. Thus, it remains to show that if 
\(a \in R(G)\), then \(a^{-1} \in R(G)\). Indeed, since $a \in R(G)$, Lemma \ref{identitiesof2engels} (i) shows that $a$ is a left $2$-Engel element. Hence, for every $x \in G$, the centralizer $C_G(\langle [x,a]\rangle)$ contains $a$, $a^{-1}$ and $x$. Therefore it follows from Lemma \ref{prop comutadores} (6) that  
$$[a^{-1},x,x] = [[x,a]^{a^{-1}}, x]  = [x,a,x] = 1.$$ Finally, let \(\varphi\) be a continuous automorphism of \(G.\) If \(a \in R(G),\) then \(\varphi([a,g,g])=[\varphi(a), \varphi(g), \varphi(g)]=1,\) whence it follows that \(\varphi(a) \in R(G),\) therefore \(R(G)\) is characteristic.
\end{proof}

The following lemma  characterizes the groups in class \(\widehat{\mathscr{A}}\), adapting \cite[Corollary~3(ii)]{mahdavimain} to profinite groups.

\begin{lemma} \label{2geradob} 
Let $G$ be a profinite group. Then $G \in \widehat{\mathscr{A}}$ if and only if $[v,x,y] \in \overline{\langle x,y \rangle}$ for all $v, x, y \in G$ satisfying $[x,y]=1$.   
\end{lemma}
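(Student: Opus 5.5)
We will argue exactly along the lines of Lemma~\ref{lemma1}, now with the closed abelian subgroup $A=\overline{\langle x,y\rangle}$ in place of the procyclic subgroup $\overline{\langle u\rangle}$. Throughout we use Remark~\ref{aremark}: for $H\leqslant_c G$, $H$ is $2$-subnormal if and only if $\overline{H^G}$ normalizes $H$, if and only if $[G,H]$ normalizes $H$.

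\emph{Forward direction.} Assume $G\in\widehat{\mathscr{A}}$ and let $x,y\in G$ with $[x,y]=1$. Then $A=\overline{\langle x,y\rangle}$ is a closed abelian subgroup, so $A\unlhd_c\overline{A^G}$. Take $v\in G$. Both $[v,x]=(x^{-1})^v x$ and $x$ lie in $A^G$, so $[v,x]$ normalizes $A$. Hence $[v,x,y]=[v,x]^{-1}y^{-1}[v,x]\,y=(y^{-1})^{[v,x]}y\in A$, which is the required condition.

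\emph{Reverse direction.} Assume the commutator condition and let $H\leqslant_c G$ be abelian. By Remark~\ref{aremark} it suffices to show that each conjugate $h^g$ ($h\in H$, $g\in G$) normalizes $H$. Since $N_G(H)$ is a closed subgroup, it then contains $\overline{H^G}$, and so $H\unlhd_c\overline{H^G}\unlhd_c G$.

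So fix $h,k\in H$ and $g\in G$. We must show $k^{h^g}\in H$, that is, $k\,[k,h^g]\in H$, so it suffices to show $[k,h^g]\in H$. Now $h^g=h[h,g]$, and since $[k,h]=1$ we get
\[
[k,h^g]=[k,h[h,g]]=[k,[h,g]]\,[k,h]^{[h,g]}=[k,[h,g]].
\]
Next we rewrite $[k,[h,g]]$ as $[[g,h],k]^{[h,g]}$, using $[h,g]=[g,h]^{-1}$ together with the inverse and conjugation identities of Lemma~\ref{prop comutadores}. The hypothesis, applied with $v=g$, $x=h$, $y=k$, gives $w:=[g,h,k]\in\overline{\langle h,k\rangle}\leqslant H$.

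It remains to remove the conjugation by $[h,g]$; this is the main obstacle. The tempting shortcut is circular: knowing that $[g,h]$ normalizes $\overline{\langle h,k\rangle}$ is exactly what we are trying to prove. The way out is to apply the hypothesis twice.
\begin{itemize}
\item With $y=h$: we get $[g,h,h]\in\overline{\langle h\rangle}$, so $(h^{-1})^{[g,h]}=h^{-1}[g,h,h]$, i.e.\ $h^{[g,h]}=h\,[g,h,h]^{-1}$. Thus $[g,h]$ maps $h$ into $\overline{\langle h,k\rangle}$.
\item With $y=k$: we get $k^{[g,h]}=k\,[k,[g,h]]$, and $[k,[g,h]]=[[g,h],k]^{-1}=w^{-1}\in\overline{\langle h,k\rangle}$. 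Thus $[g,h]$ maps $k$ into $\overline{\langle h,k\rangle}$.
\end{itemize}
Since $[g,h]$ sends both generators $h$ and $k$ into the closed subgroup $\overline{\langle h,k\rangle}$, and conjugation is a continuous automorphism, it sends $\overline{\langle h,k\rangle}$ into itself. The same argument applied to $[g,h]^{-1}=[h,g]$ works, via the variant $[v,x^{-1},y]$ with $x^{-1}$ and identity (3), in the manner of \eqref{eqparadesconjugar}. Alternatively, one can note that a conjugation mapping the closed subgroup $\overline{\langle h,k\rangle}$ into itself can be handled by a profinite argument: in each finite quotient $G/U$ the image of $\overline{\langle h,k\rangle}$ is finite, so injectivity gives equality there, and passing to the limit gives equality in $G$.

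With $[h,g]$ normalizing $\overline{\langle h,k\rangle}$, we conclude $[k,h^g]=w^{[h,g]}\in\overline{\langle h,k\rangle}\leqslant H$. Hence $h^g$ normalizes $H$ for all $h\in H$ and $g\in G$, and the reverse direction follows.
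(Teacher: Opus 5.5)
Your proof is correct. The forward direction is the paper's argument.

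In the converse, you normalize $H$ by the conjugates $h^g=h[h,g]$, whereas the paper normalizes it by the commutators $[v,x]$ with $x\in H$. The paper's choice is more economical: $y^{[v,x]}=y[v,x,y]^{-1}\in\overline{\langle x,y\rangle}\le H$ holds for every $y\in H$ at once, so it never restricts to $\overline{\langle h,k\rangle}$ or invokes continuity of conjugation.

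Both routes then have to deal with the inverse commutator. The paper uses $[x,v]=[v,x^{-1}]^x$, which is your first option. The identity actually needed there is (6), not (3): it gives $[h,g]=[g,h^{-1}]^h$, and you also need that $h$ normalizes $\overline{\langle h,k\rangle}$.

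Your second option is a genuinely different argument, and it is valid. For closed $A$ and any $t$ with $A^t\subseteq A$, finiteness of $AU/U$ gives $A^tU=AU$ for every $U\unlhd_o G$. Hence $A^t=\bigcap_U A^tU=\bigcap_U AU=A$. This fact makes the $x\mapsto x^{-1}$ device unnecessary.

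It also fills a small gap at the end. You concluded that $h^g$ normalizes $H$ from $H^{h^g}\subseteq H$ alone; the finite-quotient argument supplies the reverse inclusion. Alternatively, apply your computation to $h^{-1}$, since $(h^g)^{-1}=(h^{-1})^g$.
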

\begin{proof} 
Assume $G \in \widehat{\mathscr{A}}$. For $x, y \in G$ with $[x,y]=1$, $\overline{\langle x,y \rangle}$ is abelian and satisfies $\overline{\langle x,y \rangle} \unlhd_c \overline{\overline{\langle x,y \rangle}^G}$. For any $v \in G$, we analyze the conjugate $y^{[v,x]}$. Notice that $y^{[v,x]} = y^{(x^{-1})^{v} x}$. Moreover, \((x^{-1})^v x \in \overline{\langle x \rangle}^G \leq \overline{\langle x, y \rangle}^G\). Hence \((x^{-1})^v x \in \overline{\overline{\langle x \rangle}^G} \leq_c \overline{\overline{\langle x, y \rangle}^G}\). Therefore, the normality condition forces \(y^{[v,x]} \in \overline{\langle x, y \rangle}\).  
Expanding the conjugate yields:
\[ y^{[v,x]} = y[y, [v,x]] = y[[v, x], y]^{-1} = y[v,x,y]^{-1}. \]
Therefore, $[v,x,y] = (y^{[v,x]})^{-1} y \in \overline{\langle x,y \rangle}$.

Conversely, assume $[v,x,y] \in \overline{\langle x,y \rangle}$ for all $x,y$ commuting in $G$. Let $H$ be an arbitrary closed abelian subgroup of $G$. We want to prove that $H = H_2 = \overline{H^{\overline{H^G}}}$, i.e.\ $H \unlhd_c \overline{H^G} \unlhd_c G$, so $H$ has subnormal defect at most $2$. By the equivalence in Remark \ref{aremark}, for any $x, y \in H$ and $v \in G$, we must show that $y^{[v,x]} \in H$ and $y^{[x,v]} \in H$ to guarantee that $H$ is normalized by \(\overline{[G,H]}\), and consequently by \(\overline{H^G}\). First, for any $x, y \in H$ and $v \in G$, we have
$y^{[v,x]} = y[v,x,y]^{-1} \in \overline{\langle x,y \rangle} \le H$,
by the hypothesis and since $H$ is closed. Replacing $x$ by $x^{-1}$, we get
$y^{[v,x^{-1}]} = y[v,x^{-1},y]^{-1} \in \overline{\langle x^{-1},y \rangle} = \overline{\langle x,y \rangle} \le H$.
It remains to compute
$y^{[x,v]} = y^{[v,x^{-1}]^x} = y^{x^{-1}[v,x^{-1}]x} = \left((y^{x^{-1}})^{[v,x^{-1}]}\right)^x = \left(y^{[v,x^{-1}]}\right)^x \in \overline{\langle x,y \rangle}^x = \overline{\langle x,y \rangle} \le H$,
where the first equality follows from Lemma \ref{prop comutadores} (6).
Therefore $H = H_2$, and hence $G \in \widehat{\mathscr{A}}$.
\end{proof}


\section{Profinite groups with procyclic subgroups whose subnormal defect is at most 2}

The first result below (Proposition~\ref{G2engeldemais}) is the profinite
analogue of Theorem~1(1) of \cite{heineken}; it is the key ingredient for the
rest of the article. In the abstract case the torsion-freeness of an infinite
cyclic subgroup is automatic, since it is isomorphic to \(\mathbb{Z}\); in the
profinite setting it is not, and the relevant restriction is therefore to all
infinite procyclic subgroups be torsion-free.

\begin{proposition}[\textnormal{Profinite version of Theorem~1(1) in \cite{heineken}}]  
\label{G2engeldemais}
Let \(G \in \widehat{\mathscr{T}}\) be a non-periodic profinite group all of
whose infinite procyclic subgroups are torsion-free. Then \(G\) is a
\(2\)-Engel group. 
\end{proposition}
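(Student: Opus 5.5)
The plan is to first prove $[b,a,a]=1$ whenever $a$ has infinite order, and then reach torsion elements $a$ by multiplying them with a fixed element of infinite order. Throughout I use Lemma~\ref{class3}: $G\in\widehat{\mathfrak N}_3$, so $\gamma_4(G)=1$. Hence every commutator of weight $\ge 4$ vanishes, weight-$3$ commutators are central, and $[x,y,z]$ is multiplicative in each variable separately.

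\emph{Step 1: elements of infinite order.} Let $a\in G$ have infinite order and let $b\in G$. By hypothesis $A=\overline{\langle a\rangle}$ is torsion-free, so $A\cong\prod_{p\in S}\mathbb{Z}_p$ for some nonempty set of primes $S$. By Lemma~\ref{lemma1} and Remark~\ref{zeta} we can write $[b,a,a]=a^{\mu}$ for some $\mu\in\widehat{\mathbb Z}$. As in the proof of Lemma~\ref{olambda}, $[b,a]$ normalizes $A$ and acts on it by $x\mapsto x^{1-\mu}$; this is immediate from $a^{[b,a]}=a[a,[b,a]]=a[b,a,a]^{-1}$, extended by continuity via Lemma~\ref{olambda}. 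On the other hand, $[b,a,a]\in\gamma_3(G)$ is central, so $[b,a]$ fixes $a^{\mu}$. Comparing the two gives $a^{\mu(1-\mu)}=a^{\mu}$, that is, $a^{\mu^2}=1$. Since $A$ is a torsion-free product of $\mathbb{Z}_p$'s, $\mu^2$ acts as zero on each factor only if $\mu$ does. Hence $a^\mu=1$, so $[b,a,a]=1$.

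\emph{Step 2: torsion elements.} Fix $c\in G$ of infinite order, which exists because $G$ is non-periodic, and let $a\in G$ be of finite order. Since $G$ is nilpotent (abstractly), its torsion elements form a subgroup. Therefore $ac^n$ has infinite order for every $n\ge1$: otherwise $c^n=a^{-1}(ac^n)$ would be torsion. Now expand $[b,ac^n,ac^n]$. First, $[b,ac^n]=[b,c^n][b,a]^{c^n}=[b,c]^n[b,a]\cdot(\text{terms in }\gamma_3)$. Taking the commutator with $ac^n$ and using multilinearity of weight-$3$ commutators modulo $\gamma_4=1$, I expect
\[
[b,ac^n,ac^n]=[b,a,a]\,\bigl([b,a,c][b,c,a]\bigr)^{n}\,[b,c,c]^{n^2}.
\]
By Step 1, $[b,c,c]=1$ and the left side equals $1$ for every $n\ge1$. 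Writing $X=[b,a,a]$ and $Y=[b,a,c][b,c,a]$, the cases $n=1$ and $n=2$ give $XY=1=XY^2$, hence $Y=1$ and then $X=1$. Thus $[b,a,a]=1$ for all $a,b$, and $G$ is $2$-Engel.

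\emph{Main obstacle.} The delicate point is the expansion in Step 2. One must check that every discarded term has weight at least $4$ and therefore lies in $\gamma_4(G)=1$; note that $\gamma_4$ is the closed version of the term, so abstract commutators of weight $4$ vanish a fortiori. One must also check that no nontrivial cross terms of weight $3$ (for instance $[b,c,a]$ against $[b,a,c]$) are lost or double-counted. If that bookkeeping becomes messy, my fallback is to apply Step 1 to the infinite-order elements $ac$ and $ac^2$ and to use the Hall--Witt identity to relate $[b,a,c]$ and $[b,c,a]$ directly.
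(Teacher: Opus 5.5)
Your proof is correct, and its second half takes a genuinely different route from the paper's.

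\textbf{Step 1.} This is essentially the paper's Step 1. The paper gets $[g,u,u^{\lambda}]=1$ from the $3$-Engel identity of Lemma~\ref{lemmarefalemao} and then uses Lemma~\ref{olambda} to reach $u^{\lambda^2}=1$. You reach the same relation $a^{\mu^2}=1$ more directly: $[b,a,a]$ lies in the central subgroup $\gamma_3(G)$, and $[b,a]$ acts on $\overline{\langle a\rangle}$ by $x\mapsto x^{1-\mu}$. That action is exactly the computation underlying Lemma~\ref{olambda}.

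\textbf{Torsion elements.} Here the routes diverge. The paper applies Step 1 to $gu$ to show that every element of infinite order is a \emph{right} $2$-Engel element. It then invokes Proposition~\ref{caracteristico}, that the set $R(G)$ of right $2$-Engel elements is a subgroup. That proposition rests on Kappe's identity (Lemma~\ref{produtodisengel e doisengel}) and on Lemma~\ref{identitiesof2engels}. The paper can then write a torsion element as $v=u^{-1}(uv)$, a product of right $2$-Engel elements.

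You stay with the left $2$-Engel condition and use the trilinearity of weight-$3$ commutators that follows from $\gamma_4(G)=1$. The expansion you were worried about is right. Since $[b,c^n]\equiv[b,c]^n$ modulo the central subgroup $\gamma_3(G)$, and that central part vanishes in the next commutator, you get $[b,ac^n,ac^n]=[b,a,a]\,[b,a,c]^n[b,c,a]^n[b,c,c]^{n^2}$ with no hidden cross terms. The cases $n=1,2$ then finish the argument, so the Hall--Witt fallback is unnecessary.

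\textbf{Comparison.} Both routes use abstract nilpotency to guarantee that $ac^n$ (respectively $gu$) has infinite order. Your route is more self-contained and avoids the right-Engel machinery entirely, at the cost of leaning harder on the class-$3$ bound. The paper's route yields extra structural information: elements of infinite order are right $2$-Engel, and $R(G)$ is a closed characteristic subgroup.
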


\begin{proof} 
The proof will be divided into three distinct steps.

\textbf{Step 1:} We first show that $[g,u,u] = 1$ for any element $u \in G$ of infinite order and any $g \in G$.

Assume for a contradiction that $[g,u,u] \neq 1$. Let \(\zeta\colon \widehat{\mathbb{Z}} \to \overline{\langle u \rangle}\) be the continuous epimorphism defined by \(\alpha \mapsto u^\alpha\) (see Remark \ref{zeta}). By Lemma \ref{lemma1}, $[g,u,u] \in \overline{\langle u \rangle}$, so we may write $[g,u,u] = u^{\lambda}$ for some $\lambda \in \widehat{\mathbb{Z}}$. Suppose, for contradiction, that \(\lambda \notin \ker(\zeta)\). Since \(u\) has infinite order, the hypothesis gives
\(\overline{\langle u \rangle} \cong \prod_{p \in \pi} \mathbb{Z}_p\),
with \(\pi\) a non-empty set of primes. 

By Corollary \ref{G3engel}, $G$ is a $3$-Engel profinite group. Applying Lemma \ref{lemmarefalemao}, we have
\[ 1 = [u, [g,u], [g,u]]. \]
Applying the commutator identity \eqref{comut tirar inverso} from Lemma \ref{prop comutadores}, we obtain:
\[ 1 = \left[ [g,u,u]^{-1}, [g,u] \right] = \left[ (u^{\lambda})^{-1}, [g,u] \right] = [g,u, u^{\lambda}]^{(u^{\lambda})^{-1}}. \]
Conjugating this equation by $u^{\lambda}$ yields \([g,u,u^{\lambda}] = 1\). By Lemma \ref{olambda}, we know that \([g,u,u^{\lambda}] = [g,u,u]^{\lambda}\). Therefore,
\[ 1 = [g,u,u^{\lambda}] = [g,u,u]^{\lambda} = (u^{\lambda})^{\lambda} = u^{\lambda^2} \Longrightarrow \lambda ^2 \in \operatorname{ker}(\zeta).\]

Since \(\lambda \notin \ker(\zeta)\), the class \(\lambda + \ker(\zeta)\) is nonzero. But \((\lambda + \ker(\zeta))^2 = 0\), so this class is nilpotent. This is impossible, since \(\widehat{\mathbb{Z}}/\ker(\zeta) \cong \prod_{p \in \pi'} \mathbb{Z}_p\) has no nonzero nilpotent elements. Hence \(\lambda \in \ker(\zeta)\), so \(u^\lambda = 1\). Since \([g,u,u] = u^\lambda\), we conclude that
\begin{equation*}
    [g,u,u] = 1, \text{ for any } u \in G \text{ of infinite order and } g \in G.
\end{equation*}

\textbf{Step 2:} We now prove that $[u,g,g] = 1$ for any infinite-order element $u \in G$ and any $g \in G$.

If $g$ has infinite order, the result follows immediately from Step 1 by reversing the roles of $u$ and $g$. Hence, assume $g$ has finite order. Let $W = \langle u, g \rangle$ be the abstract subgroup of $G$ generated by $u$ and $g$. 
Since $G$ is a nilpotent profinite group of class at most $3$ by Lemma \ref{class3}, 
we have $\gamma_4(G) = \{1\}$. As $W \leq G$, it follows that 
$\gamma_4^{abs}(W) \leq \gamma_4(G) = \{1\}$, so $W$ is a nilpotent abstract group of class at most $3$. 
Since $W$ is finitely generated and nilpotent, the set of elements of finite order 
forms a subgroup \(\operatorname{tor}(W)\) of \(W\) (see Kargapolov and Merzljakov 
\cite[Lemma~16.2.1, Theorem~16.2.7]{kargapolov}). Since $u$ has infinite order and $g$ has finite order, the product $gu$ must have infinite order (otherwise, $u = g^{-1}(gu)$ would have finite order, a contradiction). 

Applying the result of Step 1 to the elements $u$ and $gu$, both of infinite order, 
and utilizing the commutator identity \eqref{comut produto dir} from Lemma 
\ref{prop comutadores}, we find: 
\[ 1 = [g, gu, gu] = [[g,u], gu] = [[g,u], u] \cdot [[g,u], g]^u = 1 \cdot [g,u, g]^u. \]
This forces $[g,u,g] = 1$. By Lemma \ref{prop comutadores}\eqref{comut inverso}, $[g,u,g] = 1$ if and only if $[u,g,g] = 1$. This completes the second step.

\textbf{Step 3:} Finally, we prove that $G$ is a $2$-Engel group; that is, the closed characteristic subgroup \(R(G) = \{x \in G \mid [x,g,g] = 1 \text{ for all } g \in G\}\) coincides with $G$ (see Proposition \ref{caracteristico}). 

From Step 2, we know that all infinite-order elements of $G$ reside in $R(G)$. It remains to examine the finite-order elements. 

Let $v \in G$ be an element of finite order, and let $u \in G$ be an element of infinite order (which must exist since $G$ is non-periodic). As established in Step 2, the product $uv$ has infinite order. Therefore, the elements $u$, $u^{-1}$, $uv$, and $(uv)^{-1}$ all have infinite order and thus belong to $R(G)$. Since $R(G)$ is a subgroup, $v = u^{-1}(uv) \in R(G)$. This implies that $[v,g,g] = 1$ for all $g \in G$. Since every element of $G$ is in $R(G)$, $G$ is a $2$-Engel group.
\end{proof}

\begin{corollary} 
\label{mais geral G2engeldemais} 
Let $G \in \widehat{\mathscr{T}}$ be  a profinite group whose nontrivial \(p\)-Sylow subgroups are torsion-free.  Then $G$ is an \(2\)-Engel group. 
\end{corollary}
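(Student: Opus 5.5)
Since $G\in\widehat{\mathscr{T}}$, Lemma~\ref{class3} makes $G$ pronilpotent, indeed nilpotent of class at most $3$. A pronilpotent group is the Cartesian product of its Sylow subgroups, so
\[
G=\prod_{p} G_p ,
\]
with $G_p$ the unique $p$-Sylow subgroup, closed and normal. The plan is to reduce to Proposition~\ref{G2engeldemais} applied to each nontrivial factor $G_p$. Then I will recover the $2$-Engel law on $G$ coordinatewise. The case $G=\{1\}$ is trivial, so assume $G\neq\{1\}$.

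First I will check that each $G_p$ lies in $\widehat{\mathscr{T}}$. Take $u,v\in G_p$. Lemma~\ref{lemma1} gives $[v,u,u]\in\overline{\langle u\rangle}$, where the closure is taken in $G$. Since $G_p$ is closed and contains $u$, this closure lies in $G_p$. Applying Lemma~\ref{lemma1} again inside $G_p$ then gives $G_p\in\widehat{\mathscr{T}}$.

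Next, each nontrivial $G_p$ is torsion-free by hypothesis, hence non-periodic. Moreover every infinite procyclic subgroup of $G_p$ is a subgroup of a torsion-free group, so it is torsion-free. Proposition~\ref{G2engeldemais} therefore applies and shows that $G_p$ is $2$-Engel. The trivial factors are $2$-Engel automatically.

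Finally, in a Cartesian product commutators are computed componentwise. For $x=(x_p)$ and $y=(y_p)$ in $G$,
\[
[x,y,y]=\bigl([x_p,y_p,y_p]\bigr)_p=1 .
\]
Hence $G$ is $2$-Engel. I could instead argue via $R(G)$ from Proposition~\ref{caracteristico}: it is closed and contains every $G_p$, hence contains the dense subgroup they generate, so $R(G)=G$.

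The main obstacle is making sure the structural identification $G=\prod_p G_p$ is legitimate. This requires pronilpotency, which holds because $G\in\widehat{\mathfrak N}_3$ forces every finite quotient to be nilpotent. The other point needing care is the inheritance of $\widehat{\mathscr{T}}$ by the closed subgroups $G_p$, and Lemma~\ref{lemma1} handles this cleanly. Everything else is a direct application of Proposition~\ref{G2engeldemais}.
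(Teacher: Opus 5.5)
Your proof is correct. It uses the same two ingredients as the paper: the Sylow decomposition $G=\prod_p G_p$ coming from Lemma~\ref{class3}, and Proposition~\ref{G2engeldemais}. The difference is where the proposition is applied.

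The paper applies it once, to $G$ itself. It notes that some $G_p\neq\{1\}$ is torsion-free, so $G$ is non-periodic. It then checks that every infinite procyclic subgroup $\overline{\langle w\rangle}$ of $G$ is torsion-free, by splitting it into its Sylow pieces $W_p\le G_p$.

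You instead apply the proposition to each nontrivial factor $G_p$, where its hypotheses are immediate. This costs you two extra steps:
\begin{itemize}
\item showing that $\widehat{\mathscr{T}}$ passes to the closed subgroup $G_p$, which you do correctly via Lemma~\ref{lemma1}, using that the closures of $\langle u\rangle$ in $G$ and in $G_p$ coincide;
\item reassembling $[x,y,y]=1$ componentwise, or equivalently using that $R(G)$ is closed.
\end{itemize}
Your route has the side benefit of showing that $\widehat{\mathscr{T}}$ is inherited by closed subgroups; the paper's route avoids both extra steps.

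Either argument could be shortened. A torsion element of $\prod_p G_p$ has torsion coordinates, so the hypothesis makes $G$ itself torsion-free. Hence, for $G\neq\{1\}$, the hypotheses of Proposition~\ref{G2engeldemais} hold for $G$ directly, and the corollary reduces to the forward direction of Theorem~\ref{2baer}.
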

\begin{proof} 
First, \(G\) has class at most \(3\) by Lemma \ref{class3}. Hence \(G\) is pronilpotent, and by Proposition~2.3.8 in \cite{RZ10}, \(G\) is the direct product of its \(p\)-Sylow subgroups, say \(G = \displaystyle\prod_{p} G_p\), where \(G_p\) denotes the \(p\)-Sylow subgroup of \(G\). If \(G=\{1\}\), there is nothing to prove. Assume \(G \neq \{1\}\). Then some
Sylow subgroup \(G_p \neq \{1\}\), which is torsion-free, hence infinite,
hence contains an element \(u\) of infinite order, and therefore so does
\(G\). 

Let \(w \in G\) have infinite order. Since \(\overline{\langle w \rangle}\) is nilpotent (see Proposition~4.2 of \cite{LIM}), it is pronilpotent. Hence \(\overline{\langle w \rangle}\) is the direct product of its \(p\)-Sylow subgroups, say \(\overline{\langle w \rangle} = \displaystyle\prod_{p \in \pi'} W_p\), where \(\pi'\) is a nonempty set of primes and \(W_p\) denotes the \(p\)-Sylow subgroup of \(\overline{\langle w \rangle}\). By Corollary~2.3.6 in \cite{RZ10}, \(W_p \leq G_p\). Since the nontrivial \(p\)-Sylow subgroups of \(G\) are torsion-free, each \(W_p\) is torsion-free. Therefore \(\overline{\langle w \rangle}\) is torsion-free, and hence \(\overline{\langle w \rangle} \cong \displaystyle\prod_{p \in \pi'} \mathbb{Z}_{p}\). The result follows from Proposition~\ref{G2engeldemais}.  
\end{proof}

\begin{remark}
The hypothesis that all infinite procyclic subgroups are torsion-free was
crucial in the proof of Proposition~\ref{G2engeldemais}: without it, the
argument that a non-periodic profinite group all of whose infinite procyclic
subgroups are \(2\)-subnormal is \(2\)-Engel would not go through. This
contrasts with the abstract case, where Theorem~1 (1) of \cite{heineken}
does not require any restriction on the torsion of cyclic subgroups. It is
not known, however, whether this hypothesis can be removed in the profinite
case: there is neither a counterexample nor a proof that it is unnecessary.
\end{remark} 

The following result gives a profinite version of a theorem of Heineken on
torsion-free \(2\)-Baer groups (see \cite{heineken} and \cite{kappetraus}).  

\begin{theorem} \label{2baer} A torsion-free profinite group $G$ is $2$-Engel if and only if
$G\in\widehat{\mathscr T}$.     
\end{theorem}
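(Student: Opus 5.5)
The plan is to prove the two implications separately. The direction $G\in\widehat{\mathscr T}\Rightarrow$ $2$-Engel should follow from Proposition~\ref{G2engeldemais}; the converse should follow from the characterisation in Lemma~\ref{lemma1}.

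\emph{($\Leftarrow$)} Suppose $G$ is a torsion-free profinite $2$-Engel group. Then $[v,u,u]=1$ for all $u,v\in G$. Since $1\in\overline{\langle u\rangle}$, the criterion of Lemma~\ref{lemma1} holds trivially, so $G\in\widehat{\mathscr T}$. Torsion-freeness is not needed for this direction.

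\emph{($\Rightarrow$)} Suppose $G\in\widehat{\mathscr T}$ is torsion-free. If $G=\{1\}$, then $G$ is trivially $2$-Engel. Otherwise every nontrivial element has infinite order, so $G$ is non-periodic. Moreover, every procyclic subgroup $\overline{\langle u\rangle}$ is a closed subgroup of the torsion-free group $G$, so it is itself torsion-free. In particular, every infinite procyclic subgroup is torsion-free. These are exactly the hypotheses of Proposition~\ref{G2engeldemais}, which then gives that $G$ is $2$-Engel. Alternatively, one could invoke Corollary~\ref{mais geral G2engeldemais}: $G$ is pronilpotent by Lemma~\ref{class3}, and its Sylow subgroups are closed subgroups of a torsion-free group, hence torsion-free.

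I do not expect a genuine obstacle here. The only point that needs care is the trivial-group case, since Proposition~\ref{G2engeldemais} assumes $G$ is non-periodic. I would also state explicitly that torsion-freeness passes to closed subgroups. This is immediate, but it is precisely the hypothesis that the proposition requires and that, by the preceding remark, cannot currently be dropped in the profinite setting.
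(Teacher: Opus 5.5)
Your proposal is correct and follows the paper's proof: Lemma~\ref{lemma1} gives the converse because $[g,h,h]=1\in\overline{\langle h\rangle}$, and Proposition~\ref{G2engeldemais} gives the forward direction. Your handling of the trivial group, and your check that torsion-freeness passes to procyclic subgroups, supply details the paper leaves implicit.
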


\begin{proof}
This follows immediately from Lemma~\ref{lemma1} and
Proposition~\ref{G2engeldemais}, since $[g,h,h]=1$, and hence
$[g,h,h]\in\overline{\langle h\rangle}$, for all $g,h\in G$.    
\end{proof}   

\begin{corollary} \label{caso prop}
A non-periodic pro-\(p\) group \(G\) whose infinite procyclic subgroups are
torsion-free is \(2\)-Engel if and only if \(G\in\widehat{\mathscr{T}}\). 
\end{corollary}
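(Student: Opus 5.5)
The statement has two directions, and each is a short consequence of results already in the paper.

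\emph{From \(\widehat{\mathscr{T}}\) to \(2\)-Engel.} This direction is exactly Proposition~\ref{G2engeldemais} applied to \(G\). The hypotheses there are that \(G\in\widehat{\mathscr{T}}\) is non-periodic and that all infinite procyclic subgroups of \(G\) are torsion-free. The present statement assumes both outright, so nothing more is needed. If one wants to stay inside the pro-\(p\) setting, note that for \(u\) of infinite order \(\overline{\langle u\rangle}\) is a procyclic pro-\(p\) group. Such a group is either finite cyclic or isomorphic to \(\mathbb{Z}_p\), and it cannot be finite since \(u\) has infinite order, so the torsion-free hypothesis is in fact automatic here. Either way, Proposition~\ref{G2engeldemais} yields \([x,y,y]=1\) for all \(x,y\in G\).

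\emph{From \(2\)-Engel to \(\widehat{\mathscr{T}}\).} I would use the criterion of Lemma~\ref{lemma1}, as in the proof of Theorem~\ref{2baer}. If \(G\) is \(2\)-Engel, then \([v,u,u]=1\) for all \(u,v\in G\). The identity certainly lies in \(\overline{\langle u\rangle}\), so Lemma~\ref{lemma1} gives \(G\in\widehat{\mathscr{T}}\). This direction uses neither non-periodicity nor the torsion hypothesis.

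There is no real obstacle, since both directions are direct appeals to earlier results. The one point to handle carefully is the hypothesis check for Proposition~\ref{G2engeldemais}: \(G\) must be non-periodic, which is assumed, and its infinite procyclic subgroups must be torsion-free, which is assumed and also follows from the classification of procyclic pro-\(p\) groups above. With that confirmed, the equivalence follows.
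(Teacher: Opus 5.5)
Your proof is correct and matches the paper's: the paper proves this corollary by the same argument as Theorem~\ref{2baer}, using Proposition~\ref{G2engeldemais} for one direction and the criterion of Lemma~\ref{lemma1} (with $[v,u,u]=1\in\overline{\langle u\rangle}$) for the other. Your side remark is also correct and worth keeping: an infinite procyclic pro-$p$ group is an infinite quotient of $\mathbb{Z}_p$, hence isomorphic to $\mathbb{Z}_p$, so the torsion-freeness hypothesis is automatic in the pro-$p$ setting.
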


\begin{proof} The proof uses the same arguments as that of Theorem~\ref{2baer}. 
\end{proof}

\begin{example}\label{grupo Q16}
Consider the generalized quaternion group of order $16$ (identified as
\texttt{SmallGroup(16,9)} in \cite{GAP}), defined by
$$
Q_{16}=\langle a,b\mid a^8=1,\; b^2=a^4,\; a^b=a^{-1}\rangle.
$$
It is nilpotent of class $3$, with center \(\langle a^4\rangle\cong C_2\) and
derived subgroup \(\langle a^2\rangle\cong C_4\). The cyclic subgroups of order $4$, generated by $b,ba,ba^2,ba^3$, are
$2$-subnormal, while all other subgroups are normal; hence every cyclic
subgroup of $Q_{16}$ is $2$-subnormal. Therefore $Q_{16}$ belongs to
$\mathscr T$; endowed with the discrete topology, it is a profinite group
in $\widehat{\mathscr T}$. Since $a^b=a^{-1}$, we have $[a,b]=a^{-1}a^b=a^{-2}$, and hence
$$
[a,b,b]=[a^{-2},b]=(a^2)(a^{-2})^b=a^2a^2=a^4\neq 1.
$$
Therefore $Q_{16}$ is not a $2$-Engel group.

Similarly, by Theorem~1 of~\cite{MAHDAVIANARY1984}, the group $H(3,1)$ with the presentation
$$
H(3,1) = \langle x, y \mid [y,x,x] = x^3 = [y,x,y] = y^3,\; x^9 = 1,\; \gamma_4^{abs}(H(3,1)) = \{1\} \rangle
$$
is a finite $3$-group of order $81$. Moreover, $H(3,1)$ is nilpotent of class $3$ and belongs to the class $\mathscr T$; when endowed with the discrete topology, it belongs to the class \(\widehat{\mathscr{T}}\). Since \([y,x,x] = x^3 \neq 1\), it follows that \(H(3,1)\) is not \(2\)-Engel.

Thus the non-periodicity hypothesis in Proposition~\ref{G2engeldemais} cannot be
removed: the examples $Q_{16}$ and $H(3,1)$ are periodic, belong to $\widehat{\mathscr{T}}$, and are
not $2$-Engel. Note also that $Q_{16}$ has an involution (namely $a^4$),
whereas $H(3,1)$, being a $3$-group, has none. Finally, these examples show
that the torsion-free hypothesis in Theorem~\ref{2baer} cannot be relaxed
either: for instance, \(H(3,1)\) lies in \(\widehat{\mathscr T}\) but is not
\(2\)-Engel.
\(\blacksquare\)
\end{example}

Let $G$ be a profinite group. For an integer $n \geq 1$, consider the continuous map \(\varphi_n\colon G \to G\), \(\varphi_n(x) = x^n\) \((x \in G)\), and let \(T_n = \varphi_n^{-1}(\{1\}) = \{x \in G : x^n = 1\}\) denote the set of \(n\)-th roots of unity in \(G\). Then \(T_n = \varphi_n^{-1}(\{1\})\) is closed in \(G\).

The following result is well-known for abstract groups (see Robinson \cite[Theorem~7.14]{chapter7}). The proof relies on the fact that the group satisfies the \(2\)-Engel condition, which we established for profinite groups in Proposition \ref{G2engeldemais}.

\begin{corollary}[\textnormal{Profinite version of Theorem~1 (2) in \cite{heineken}}] \label{gammatrestres}  
Let \(G \in \widehat{\mathscr{T}}\) be a non-periodic profinite group all of
whose infinite procyclic subgroups are torsion-free. Then \((\gamma_3(G))^3 = \{1\}.\) In particular, if \(G \in \widehat{\mathscr{T}}\) is torsion-free, the result applies. 
\end{corollary}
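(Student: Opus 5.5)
By Proposition~\ref{G2engeldemais}, $G$ is a profinite $2$-Engel group. The plan is to import the abstract identity $[x,y,z]^3=1$, valid in every $2$-Engel group (Robinson \cite[Theorem~7.14]{chapter7}), and then pass from commutators to the closed subgroup $\gamma_3(G)$ by a topological argument.

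\textbf{Step 1.} For any $x,y,z\in G$, the abstract subgroup $\langle x,y,z\rangle$ is a $2$-Engel group. The abstract theory then gives two facts: $[x,y,z]^3=1$, and $\gamma_3^{abs}$ of a $2$-Engel group is central of exponent dividing $3$. More generally, the abstract group $G$ (forgetting the topology) is $2$-Engel. Hence $\gamma_3^{abs}(G)=[[G,G],G]$ lies in the center of $G$ and has exponent dividing $3$; that is, $\gamma_3^{abs}(G)\subseteq T_3$.

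\textbf{Step 2.} Next we pass to closures. Since $G$ is nilpotent of class at most $3$ (Lemma~\ref{class3}), $\gamma_4(G)=1$. The closed subgroup $\gamma_3(G)=\overline{[\gamma_2(G),G]}$ coincides with $\overline{\gamma_3^{abs}(G)}$. To see this, note first that $\gamma_2(G)=\overline{[G,G]}$. By continuity of the commutator map, $[\overline{[G,G]},G]\subseteq\overline{[[G,G],G]}$, because for fixed $g$ the map $c\mapsto[c,g]$ is continuous and sends $[G,G]$ into $\gamma_3^{abs}(G)$. Products of such elements remain in the closed subgroup $\overline{\gamma_3^{abs}(G)}$. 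Thus $\gamma_3(G)=\overline{\gamma_3^{abs}(G)}$. Finally, $T_3=\varphi_3^{-1}(\{1\})$ is closed, and $\gamma_3^{abs}(G)\subseteq T_3$, so $\gamma_3(G)\subseteq T_3$. This gives $x^3=1$ for every $x\in\gamma_3(G)$, i.e.\ $(\gamma_3(G))^3=\{1\}$, reading the statement as saying that $\gamma_3(G)$ has exponent dividing $3$. Because $\gamma_3(G)$ is abelian (it is central), its closed subgroup generated by cubes is then trivial as well.

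\textbf{Step 3.} For the final sentence of the statement: if $G\in\widehat{\mathscr T}$ is torsion-free and nontrivial, then it is non-periodic, and every procyclic subgroup is torsion-free, so the hypotheses hold. If $G$ is trivial, the statement is immediate.

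The main obstacle is Step 1, specifically making sure the abstract result applies. Robinson's Theorem~7.14 asserts, for a $2$-Engel group, that $\gamma_3$ has exponent $3$ and is central; it is a purely algebraic consequence of the identity $[x,y,z]^3=1$ together with the centrality of $[x,y,z]$. If I needed to reprove it, I would use Lemma~\ref{identitiesof2engels}. With every element right $2$-Engel, (iv) gives that $[x,[y,z]]$ is central. Then (ii) and (iii), combined with the Hall--Witt identity (Lemma~\ref{prop comutadores}(8)), give $[x,y,z]^3=1$.
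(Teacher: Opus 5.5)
Your proposal is correct and follows essentially the paper's route. You get the $2$-Engel property from Proposition~\ref{G2engeldemais} and the identity $[x,y,z]^3=1$; the paper derives that identity exactly as in your fallback, via Hall--Witt and Lemma~\ref{identitiesof2engels}(ii),(iii). Then, as in the paper, you use $\gamma_3(G)=\overline{\gamma_3^{abs}(G)}$ and the closedness of $T_3$. The differences are cosmetic: you get $\gamma_3(G)\subseteq\overline{\gamma_3^{abs}(G)}$ from the continuity of $c\mapsto[c,g]$ rather than through finite quotients, and you put all of $\gamma_3^{abs}(G)$ into $T_3$ at the abstract level, whereas the paper notes that $T_3\cap\gamma_3(G)$ is a closed subgroup containing the generators.
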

\begin{proof} 
We invoke the Hall-Witt identity (Lemma \ref{prop comutadores} \textit{(8)} ):
\begin{equation} \label{witthall1}
    1 = [x, y^{-1}, z]^{y} \cdot [y, z^{-1}, x]^{z} \cdot [z, x^{-1}, y]^{x} \quad \text{for all } x,y,z \in G. 
\end{equation}
By Proposition \ref{G2engeldemais}, $G$ is a $2$-Engel group. Applying Lemma \ref{identitiesof2engels} (ii) and (iii) along with Lemma \ref{prop comutadores}\eqref{comut produto dir}, we simplify the individual terms of the Hall-Witt identity:
\begin{align*}
    [x,y^{-1}, z]^{y} &= ([x,z,y^{-1}]^y)^{-1} = ([y,[x,z]])^{-1} = [x,z,y] = [x,y,z]^{-1}, \\
    [y,z^{-1},x]^z &= ([y,x,z^{-1}]^z)^{-1} = [z,[y,x]]^{-1} = [y,x,z] = [y,z,x]^{-1} = [x,[y,z]] = [x,y,z]^2, \\
    [z,x^{-1},y]^{x} &= ([z,y,x^{-1}]^x)^{-1} = [x,[z,y]]^{-1} = [x,z,y]^{-2} = [x,y,z]^2.
\end{align*}
Substituting these simplified terms back into equation \eqref{witthall1} yields:
\[ 1 = [x,y,z]^{-1} \cdot [x,y,z]^2 \cdot [x,y,z]^2 = [x,y,z]^3 \quad \text{for all } x,y,z \in G. \]

Since $G$ has class at most $3$ by Lemma \ref{class3}, we have 
$\gamma_4(G) = \{1\}$; hence $[\gamma_3(G), G] = \{1\}$ and $\gamma_3(G)$ is 
central. We claim that $\gamma_3(G) = \overline{\gamma_3^{abs}(G)}$. Indeed, 
$\gamma_3^{abs}(G) \leq \gamma_3(G)$ is immediate. For the reverse, let 
$N \trianglelefteq_o G$ be an open normal subgroup. In the finite quotient 
$G/N$, the closed and abstract central series coincide, so 
$\gamma_3(G)N/N = \gamma_3^{abs}(G)N/N$, hence 
$\gamma_3(G) \leq \gamma_3^{abs}(G)N$. Taking the intersection over all such 
$N$ and using 
$\overline{\gamma_3^{abs}(G)} = \bigcap_{N \trianglelefteq_o G} 
\gamma_3^{abs}(G)N$ \cite[Proposition~0.3.3]{wilson98}, we obtain 
\(\gamma_3(G) = \overline{\gamma_3^{abs}(G)}\). Thus \(\gamma_3(G)\) is 
topologically generated by commutators \([x,y,z]\), with no commutators of weight 
\(\geq 4\) to consider. Recall that \(T_3 = \{x \in G : x^3 = 1\}\) is closed in \(G\). Since 
\([x,y,z]^3 = 1\) for all \(x,y,z \in G\), we have \(\{[x,y,z]\mid x, y, z \in G\} \subseteq T_3\). As 
\(\gamma_3(G)\) is central, it is abelian, so the map \(w \mapsto w^3\) is a 
homomorphism on \(\gamma_3(G)\), and \(T_3 \cap \gamma_3(G)\) is a closed subgroup 
of \(\gamma_3(G)\) containing all generators \([x,y,z]\). Hence 
\(\gamma_3(G) = \overline{\langle \{[x,y,z]\mid x, y, z \in G\} \rangle} \subseteq T_3 \cap 
\gamma_3(G)\), and therefore \((\gamma_3(G))^3 = \{1\}\). 
\end{proof}

Note that, in the proof of Corollary \ref{gammatrestres}, the hypothesis on 
infinite procyclic subgroups serves only to deduce that \(G\) is 2-Engel: 
the non-periodicity of \(G\) guarantees, via Proposition 
\ref{G2engeldemais}, the existence of an element of infinite order whose 
procyclic subgroup is torsion-free, which in turn yields the 2-Engel 
condition. Once 2-Engelness is assumed outright, the non-periodicity 
hypothesis becomes superfluous, and the same conclusion holds, as recorded 
below.

\begin{proposition}\label{gammatrestres2engel}
Let \(G \in \widehat{\mathscr{T}}\) be a 2-Engel profinite group. Then 
\((\gamma_3(G))^3 = \{1\}\).
\end{proposition}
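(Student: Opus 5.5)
The plan is to repeat the second half of the proof of Corollary~\ref{gammatrestres} verbatim, noting that the 2-Engel hypothesis is now assumed outright rather than derived from Proposition~\ref{G2engeldemais}. The key fact is that \(G\) is 2-Engel, so every element of \(G\) is a right \(2\)-Engel element. Hence Lemma~\ref{identitiesof2engels}(ii)--(iii) applies with \(w\) taken to be any of \(x,y,z\in G\).

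First, I will plug these identities into the Hall--Witt identity (Lemma~\ref{prop comutadores}(8)). Exactly as in the displayed computation in the proof of Corollary~\ref{gammatrestres}, the three factors become
\[
[x,y^{-1},z]^{y}=[x,y,z]^{-1},\qquad [y,z^{-1},x]^{z}=[x,y,z]^{2},\qquad [z,x^{-1},y]^{x}=[x,y,z]^{2}.
\]
This gives \([x,y,z]^3=1\) for all \(x,y,z\in G\). This is the only place where group-theoretic input beyond the \(\widehat{\mathscr T}\) hypothesis enters. I expect it to be the main point to check: each manipulation there (for example \([x,y^{-1},z]^y=[y,[x,z]]^{-1}\) and \([w,[x,y]]=[w,x,y]^2\)) uses only that the relevant first entry is a right 2-Engel element, which now holds globally.

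Second, I need that \(\gamma_3(G)\) is central and topologically generated by the commutators \([x,y,z]\). Centrality follows from Lemma~\ref{class3}, which gives \(G\in\widehat{\mathfrak N}_3\), so \(\gamma_4(G)=\{1\}\) and \(\gamma_3(G)\) is central, hence abelian. For generation, I will show \(\gamma_3(G)=\overline{\gamma_3^{abs}(G)}\) by passing to the finite quotients \(G/N\) with \(N\unlhd_o G\) and intersecting the sets \(\gamma_3^{abs}(G)N\), as in the earlier proof. Since \(\gamma_3^{abs}(G)=[\gamma_2^{abs}(G),G]\) and \([\,\cdot\,,g]\) is a homomorphism on \(\gamma_2^{abs}(G)\) modulo \(\gamma_4^{abs}(G)=1\), the group \(\gamma_3^{abs}(G)\) is generated by the elements \([x,y,z]\).

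Finally, on the abelian group \(\gamma_3(G)\) the map \(w\mapsto w^3\) is a continuous homomorphism. Its kernel \(T_3\cap\gamma_3(G)\) is therefore a closed subgroup, and by the first step it contains every \([x,y,z]\). It thus contains \(\overline{\gamma_3^{abs}(G)}=\gamma_3(G)\), which gives \((\gamma_3(G))^3=\{1\}\).
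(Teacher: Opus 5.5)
Your proposal is correct and follows the paper's route. The paper's proof of this proposition just redoes the Hall--Witt computation from the proof of Corollary~\ref{gammatrestres}, now using the assumed 2-Engel property so that every element is right 2-Engel, and then uses that \(\gamma_3(G)\) is (topologically) generated by the commutators \([x,y,z]\). You supply exactly these steps, and you spell out the closure argument \(\gamma_3(G)=\overline{\gamma_3^{abs}(G)}\) and the generation of \(\gamma_3^{abs}(G)\) by weight-3 commutators more carefully than the paper's short proof does.
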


\begin{proof}
Since \(G\) is 2-Engel, \([x,y,y]=1\) for all \(x,y \in G\). Combining this 
with the standard commutator identities yields \([x,y,z]^3=1\) for all 
\(x,y,z \in G\) (the same computation as in the proof of Corollary 
\ref{gammatrestres}, \textit{mutatis mutandis}). As \(\gamma_3(G)\) is generated by 
commutators of weight \(3\), it follows that \((\gamma_3(G))^3=\{1\}\).
\end{proof} 

The following result answers Question \ref{korou} for torsion-free profinite groups with \(n=2\). For \(n=1\), such groups are Dedekind, hence profinite abelian, and thus of class \(1\).

\begin{theorem} \label{leviprofinite} 
Let \(G\) be a non-periodic pro-\(\pi\) group without elements of order
\(3\), whose infinite procyclic subgroups are torsion-free, and such that
every closed subgroup of \(G\) is \(2\)-subnormal. Then its nilpotency
class is at most \(2\). In particular, this holds if \(G\) is a torsion-free
profinite group all of whose closed subgroups are \(2\)-subnormal.
\end{theorem}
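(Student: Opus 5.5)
Since every closed subgroup is \(2\)-subnormal, in particular every procyclic subgroup is, so \(G\in\widehat{\mathscr{S}}\subseteq\widehat{\mathscr{T}}\). By hypothesis \(G\) is non-periodic and its infinite procyclic subgroups are torsion-free, so Proposition~\ref{G2engeldemais} gives that \(G\) is a profinite \(2\)-Engel group. Corollary~\ref{gammatrestres} (or Proposition~\ref{gammatrestres2engel}) then gives \((\gamma_3(G))^3=\{1\}\). The goal is to conclude \(\gamma_3(G)=\{1\}\), i.e.\ \(G\in\widehat{\mathfrak{N}}_2\).

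To finish, I will use the assumption that \(G\) has no elements of order \(3\). By Lemma~\ref{class3}, \(\gamma_4(G)=\{1\}\), so \(\gamma_3(G)\) is central and hence abelian. Suppose \(w\in\gamma_3(G)\). Since \(w^3=1\), either \(w=1\) or \(w\) has order exactly \(3\); the latter is excluded by hypothesis, so \(w=1\). Thus \(\gamma_3(G)=\{1\}\) and the nilpotency class is at most \(2\). The pro-\(\pi\) hypothesis is not really needed beyond this. If \(3\notin\pi\), one could alternatively note that a pro-\(\pi\) group has no nontrivial element of \(3\)-power order, but the direct argument above already covers this case.

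For the ``in particular'' statement: if \(G\) is torsion-free and every closed subgroup is \(2\)-subnormal, then either \(G=\{1\}\), which is trivial, or \(G\) is non-periodic. In the latter case every procyclic subgroup is torsion-free and there are no elements of order \(3\), so the first part applies. Here I regard a profinite group as a pro-\(\pi\) group with \(\pi\) the set of all primes.

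The main obstacle is already handled by Proposition~\ref{G2engeldemais} and the Hall--Witt computation in Corollary~\ref{gammatrestres}. The only point needing care is that \(\gamma_3(G)\) is the closure of the abstract subgroup generated by the commutators \([x,y,z]\). This was verified in the proof of Corollary~\ref{gammatrestres}, and it ensures that exponent \(3\) holds for every element of \(\gamma_3(G)\), not just for the generating commutators.
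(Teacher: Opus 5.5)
Your proposal is correct and follows the paper's proof. You place $G$ in $\widehat{\mathscr T}$, obtain $(\gamma_3(G))^3=\{1\}$ from Corollary~\ref{gammatrestres} (which uses the $2$-Engel property from Proposition~\ref{G2engeldemais}), and then use the absence of elements of order $3$ to conclude $\gamma_3(G)=\{1\}$; your explicit handling of the trivial group in the ``in particular'' clause is a small detail the paper leaves implicit.
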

\begin{proof} As every closed subgroup is \(2\)-subnormal, so is every cyclic subgroup, and therefore \(G \in \widehat{\mathscr{T}}.\) By Corollary \ref{gammatrestres}, we have \((\gamma_3(G))^3 = \{1\}\). Since \(G\) has no elements of order \(3\), this forces
\(\gamma_3(G) = \{1\}\), so \(G\) has nilpotency class at most \(2\). 
\end{proof}

The following result is a profinite version of a theorem of Stadelmann
(see \cite{Stadelmann1978}). 

\begin{corollary} \label{freetorsion classe2}
Let $G$ be a torsion-free profinite group. Every closed subgroup of $G$ has
subnormal defect at most $2$ if and only if $G$ is nilpotent of class $2$.     
\end{corollary}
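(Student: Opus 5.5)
The statement splits into two directions, and the plan is to treat each with an earlier result.

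For the forward direction, suppose $G$ is torsion-free and every closed subgroup of $G$ is $2$-subnormal. Then $G\in\widehat{\mathscr S}$. If $G=\{1\}$ there is nothing to prove; otherwise $G$ is non-periodic, since every nontrivial element has infinite order. Every infinite procyclic subgroup of $G$ is torsion-free because $G$ is, and $G$ has no elements of order $3$. So Theorem~\ref{leviprofinite} (its ``in particular'' clause) applies directly and gives $\gamma_3(G)=\{1\}$, that is, nilpotency class at most $2$. We read ``class $2$'' in the statement as ``class at most $2$''. Otherwise an abelian torsion-free profinite group, such as $\mathbb Z_p$, would be a counterexample, since all its closed subgroups are normal. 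We will note this reading explicitly.

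For the converse, assume $G$ is nilpotent of class at most $2$, meaning $\gamma_3(G)=\{1\}$, equivalently $Z_2(G)=G$. Proposition~\ref{defecttwo} then says every closed subgroup of $G$ has subnormal defect at most $2$, via the explicit closed chain $H\unlhd_c HZ(G)\unlhd_c G$. Torsion-freeness is not needed in this direction.

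The only point needing care is the passage from the hypothesis of the corollary to those of Theorem~\ref{leviprofinite}. That theorem is phrased for pro-$\pi$ groups, but its ``in particular'' clause covers arbitrary torsion-free profinite groups, since $\pi$ may be taken to be all primes. We will also remark that the trivial group is handled separately because it is periodic. Beyond this, the argument is a direct combination of earlier results, and there is no substantive obstacle: the real content, namely the $2$-Engel property and $(\gamma_3(G))^3=1$, is already contained in Proposition~\ref{G2engeldemais} and Corollary~\ref{gammatrestres}.
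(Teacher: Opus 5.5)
Your proposal is correct and follows the paper's proof, which is the one-line combination of Theorem~\ref{leviprofinite} (forward direction) and Proposition~\ref{defecttwo} (converse). Your reading of ``class $2$'' as ``class at most $2$'', justified by abelian examples such as $\mathbb{Z}_p$, and your separate treatment of the trivial group are exactly the implicit conventions the paper's argument needs to be accurate.
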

\begin{proof}
It follows directly from Proposition~\ref{defecttwo} and Theorem~\ref{leviprofinite}. 
\end{proof}

\begin{remark}
The group \(H(3,1)\) from Example~\ref{grupo Q16} shows that the
torsion-free hypothesis in Corollary~\ref{freetorsion classe2} cannot be
relaxed. By Corollary~1 of~\cite{MAHDAVIANARY1984}, for a \(2\)-generator
\(p\)-group with \(p\geq 3\), membership in \(\mathscr S\) is equivalent to
membership in \(\mathscr T\); equivalently, when such a finite group is
endowed with the discrete topology, it lies in the hatted class
\(\widehat{\mathscr T}\) if and only if it lies in \(\widehat{\mathscr S}\).
In particular, \(H(3,1)\in\widehat{\mathscr S}\), so every subgroup of
\(H(3,1)\) has defect at most \(2\); yet \(H(3,1)\) is nilpotent of class \(3\).
\end{remark}

\begin{proposition} \label{prop1} Let $G \in \widehat{\mathscr{T}}$ be a torsion profinite group without involutions. Then $G \in \widehat{\mathscr{A}}$.
\end{proposition}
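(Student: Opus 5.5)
We would prove this via the criterion of Lemma~\ref{2geradob}: it suffices to show that $[v,x,y]\in\overline{\langle x,y\rangle}$ whenever $v,x,y\in G$ and $[x,y]=1$. The natural strategy is to transfer Mahdavianary's abstract theorem (\cite[Theorem~1(i)]{mahdavimain}: a periodic group with no elements of even order lies in $\mathscr{T}$ if and only if it lies in $\mathscr{A}$) through finite quotients, and then pass to the limit.

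First, fix an open normal subgroup $U\unlhd_o G$. By Lemma~\ref{formula0}, $G/U\in\mathscr{T}$. Since $G$ is pronilpotent by Lemma~\ref{class3}, we have $G=\prod_p G_p$. The absence of involutions forces $G_2$ to be torsion-free, and $G_2$ is also torsion, so $G_2=1$. Hence $G$ is a pro-$2'$ group, and every finite quotient $G/U$ has odd order. So $G/U$ is a periodic group with no elements of even order lying in $\mathscr{T}$, and Mahdavianary's theorem gives $G/U\in\mathscr{A}$. Applying the abstract analogue of Lemma~\ref{2geradob} (\cite[Corollary~3(ii)]{mahdavimain}) in $G/U$ to the commuting images $xU,yU$, we get
\[
[v,x,y]U\in\langle xU,yU\rangle=\overline{\langle x,y\rangle}\,U/U .
\]
Thus $[v,x,y]\in\overline{\langle x,y\rangle}\,U$ for every $U\unlhd_o G$.

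Intersecting over all $U$, and using that $\overline{\langle x,y\rangle}$ is closed (so it equals $\bigcap_U\overline{\langle x,y\rangle}U$ by \cite[Proposition~0.3.3]{wilson98}), we obtain $[v,x,y]\in\overline{\langle x,y\rangle}$. Lemma~\ref{2geradob} then gives $G\in\widehat{\mathscr{A}}$.

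The main point to check is the middle step. We must make sure that ``without involutions'' genuinely passes to finite quotients, which is why we route through the Sylow decomposition rather than quoting the hypothesis directly. We must also make sure that Mahdavianary's hypothesis ``no elements of even order'' is exactly what we have. If the abstract criterion \cite[Corollary~3(ii)]{mahdavimain} were unavailable in the needed form, we would instead use $G/U\in\mathscr{A}$ directly. In that case $\langle xU,yU\rangle\unlhd\langle xU,yU\rangle^{G/U}$ is conjugated by $[v,x]U$, and we would rerun the first half of the proof of Lemma~\ref{2geradob} in the finite quotient.
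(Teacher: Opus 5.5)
Your argument is correct, and its skeleton matches the paper's. Both proofs apply Mahdavianary's \cite[Theorem~1(i)]{mahdavimain} to the odd-order quotients $G/U\in\mathscr{T}$ supplied by Lemma~\ref{formula0}, then pass to the limit over $U\unlhd_o G$. The difference lies in what is passed to the limit.

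The paper works subgroup-wise. For a closed abelian $H$ it uses $\pi_U(\overline{H^G})=(HU/U)^{G/U}$ to pull back $HU\unlhd\overline{H^G}U$ for every $U$, and then concludes $H\unlhd_c\overline{H^G}$. It wraps this in a contradiction with ``defect exactly $3$'' via Lemma~\ref{class3} and Proposition~\ref{defecttwo}, which is not needed. You instead pass the element-wise condition $[v,x,y]\in\overline{\langle x,y\rangle}\,U$ to the limit and let Lemma~\ref{2geradob} turn it into $2$-subnormality of every closed abelian subgroup. This avoids tracking normal closures through $\pi_U$ and the defect-$3$ detour. It also only needs $2$-subnormality of $2$-generated abelian subgroups in each $G/U$, so, as you note, the abstract Corollary~3(ii) can be replaced by rerunning the forward half of Lemma~\ref{2geradob} in $G/U$.

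Your handling of the hypothesis is more careful than the paper's one-line ``pro-$2'$'' assertion. Absence of involutions alone does not pass to finite quotients (consider $\mathbb{Z}_2\to C_2$); it is the torsion hypothesis that forces the Sylow $2$-subgroup to be trivial. Pronilpotence is not actually needed for this step, since in any torsion profinite group a nontrivial Sylow $2$-subgroup would contain an involution.
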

\begin{proof}
By \cite[Theorem~2.1.3]{RZ10}, $G$ can be written as \(G = \varprojlim_{U \in \mathcal{N}} G/U\), where \(\mathcal{N}\) is a fundamental system of open neighborhoods of the identity \(1 \in G\) consisting of open normal subgroups of \(G\). For each \(U \in \mathcal{N}\), let \(\pi_{U} \colon G \twoheadrightarrow G/U\) be the continuous canonical projection, and these projections are compatible. 

Let $H$ be an arbitrary closed abelian subgroup of $G$. To prove that $G \in \widehat{\mathscr{A}}$, we must show that $H$ has subnormal defect at most $2$. Since $H$ is abelian, its image \(\pi_U(H) = HU/U\) is abelian for each \(U \in \mathcal{N}\). Because \(G\) is a periodic profinite group lacking even-order elements (a pro-\(2'\) group), every finite quotient \(G/U\) is a finite \(2'\)-group. By Lemma \ref{formula0}, \(G/U \in \mathscr{T}\) for all \(U \in \mathcal{N}\). Applying Mahdavianary's abstract result \cite[Theorem~1(i)]{mahdavimain}, any finite \(2'\)-group in \(\mathscr{T}\) necessarily belongs to \(\mathscr{A}\). Thus, \(G/U \in \mathscr{A}\) for all \(U \in \mathcal{N}\), implying that the abelian subgroup \(HU/U\) is \(2\)-subnormal in \(G/U\), i.e.\ \(HU/U \unlhd^{2} G/U\) for all \(U \in \mathcal{N}\). 
Since \(G \in \widehat{\mathscr{T}}\), Lemma \ref{class3} ensures that \(G \in \widehat{\mathfrak{N}}_{3}\). By Proposition \ref{defecttwo}, every closed subgroup of \(G\) has
defect at most \(3\). Suppose for a contradiction that \(H\) has defect
exactly \(3\). 
Then \(H\) admits the strict chain of normal closures:
\[ H = H_3 = \overline{H^{H_2}} \triangleleft_c H_2 = \overline{H^{\overline{H^G}}} \triangleleft_c H_1 = \overline{H^G} \triangleleft_c H_0 = G. \]
On the other hand, since \(HU/U\) has defect at most \(2\) in \(G/U\), we have
\[ \pi_U(H)=\frac{HU}{U} \unlhd \pi_U(\overline{H^G}) =\pi_U(H^G)= \frac{H^G U}{U} =\left( \frac{HU}{U} \right)^{G/U}. \]
Hence, by the Correspondence Theorem, pulling back through \(\pi_U\) gives \(HU \unlhd_c \overline{H^G}U\). Since this holds for every \(U \in \mathcal{N}\), it follows that \(H \unlhd_c \overline{H^G} = H_1\), contradicting the strictness of the chain \(H \triangleleft_c H_2 \triangleleft_c H_1\). Therefore, the subnormal defect of \(H\) in \(G\) must be at most \(2\). This proves that \(G \in \widehat{\mathscr{A}}\). 
\end{proof}

\begin{corollary} \label{da finito}
Let $G$ be a finitely generated periodic pro-\(\pi\) group, where \(\pi\) is a finite set of odd primes. If all procyclic subgroups of \(G\) have a subnormal defect of at most 2. Then \(G\) is a finite \(\pi\)-group in which all abelian subgroups are subnormal of defect at most 2. Furthermore, if \(G\) is \(2\)-generated, it belongs to the class \(\mathscr{S}\), i.e.\ every subgroup of \(G\) has subnormal defect at most \(2\).
\end{corollary}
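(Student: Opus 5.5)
The plan is to show first that $G$ is finite, and then to invoke Proposition~\ref{prop1} and Mahdavianary's results. Since $G\in\widehat{\mathscr{T}}$, Lemma~\ref{class3} gives $G\in\widehat{\mathfrak{N}}_3$. In particular $G$ is pronilpotent, so $G=\prod_{p\in\pi}G_p$ is the direct product of its Sylow subgroups, and since $\pi$ is finite this product has only finitely many factors. Each $G_p$ is a continuous image of $G$, so it is a finitely generated pro-$p$ group. It is also nilpotent of class at most $3$ and periodic. It therefore suffices to show that a finitely generated periodic nilpotent pro-$p$ group is finite.

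To prove finiteness, I would use the filtration by the closed lower central series $\gamma_i(G_p)$, $i=1,\dots,4$, with $\gamma_4(G_p)=1$. The abelianization $G_p/\gamma_2(G_p)$ is a finitely generated abelian pro-$p$ group, hence of the form $\mathbb{Z}_p^r\times F$ with $F$ finite. Periodicity forces $r=0$, so this quotient is finite. Next, each factor $\gamma_i(G_p)/\gamma_{i+1}(G_p)$ is topologically generated by the images of commutators $[c,x]$, where $c$ ranges over generators of $\gamma_{i-1}$ and $x$ over the finitely many generators of $G_p$. As in the argument inside the proof of Corollary~\ref{gammatrestres}, one has $\gamma_i(G_p)=\overline{\gamma_i^{abs}(G_p)}$. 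Together with multilinearity of commutators modulo the next term, this gives a continuous multilinear surjection from copies of the finite group $G_p/\gamma_2(G_p)$ onto a dense subgroup of the factor. Hence each factor is finite, and therefore $G_p$ is finite. This step is the main technical point: the delicate part is making the density and multilinearity argument precise in the profinite setting. An alternative route that avoids this would be Zelmanov's theorem that periodic profinite groups are locally finite, since a finitely generated dense abstract subgroup is then finite and hence closed. However, the elementary nilpotent argument above suffices, and I would present that one.

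Consequently $G$ is a finite $\pi$-group of odd order, so it has no involutions. By Proposition~\ref{prop1} (or directly by \cite[Theorem~1(i)]{mahdavimain}, since $G$ is now finite and discrete), we get $G\in\widehat{\mathscr{A}}$. For a finite group this is the same as $G\in\mathscr{A}$, so every abelian subgroup of $G$ is $2$-subnormal.

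For the final assertion, suppose $G$ is $2$-generated. Then each $G_p$ is a $2$-generator finite $p$-group with $p\ge 3$ lying in $\mathscr{T}$, since $\mathscr{T}$ passes to quotients as in Lemma~\ref{formula0}. By Corollary~1 of \cite{MAHDAVIANARY1984}, each $G_p\in\mathscr{S}$. It remains to pass from the factors to $G=\prod G_p$. Any subgroup $K\le G$ decomposes as $K=\prod (K\cap G_p)$, because the $G_p$ have coprime orders. The normal closures are also taken componentwise, so the chain $K\unlhd K^G\unlhd G$ is assembled from the corresponding chains in each factor. Hence $G\in\mathscr{S}$. Alternatively, one can apply Mahdavianary's $2$-generator equivalence \cite{mahdavithesis} directly to $G$.
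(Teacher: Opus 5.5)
Your proposal is correct. It shares the paper's skeleton: $G\in\widehat{\mathscr{T}}$, then Lemma~\ref{class3}, the Sylow decomposition $G=\prod_{p\in\pi}G_p$, finiteness, and Proposition~\ref{prop1}. It differs from the paper in two steps.

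\textbf{Finiteness.} The paper gets finiteness of each $G_p$ in one line from Zel'manov: a finitely generated periodic pro-$p$ group is finite. You instead use the nilpotency already supplied by Lemma~\ref{class3}. The abelianization is finite, $\gamma_i=\overline{\gamma_i^{abs}}$, and commutators are multilinear modulo $\gamma_{i+1}$, so every lower central factor is finite. This is self-contained and avoids a deep theorem whose full strength is not needed once nilpotency is known. The cost is the bookkeeping you yourself flag:
\begin{itemize}
\item well-definedness of the multilinear map on $G_p/\gamma_2(G_p)$, with $\gamma_2$ closed, needs a continuity and density argument;
\item the image of that map is not a subgroup but a finite generating set of a finite, hence closed, dense subgroup of the factor, which therefore equals the whole factor.
\end{itemize}
Your idea also has a shorter form. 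The abstract subgroup generated by a finite topological generating set of $G$ is a finitely generated periodic nilpotent group, hence finite, hence closed and dense, so it is all of $G$. That version even shows that the finiteness of $\pi$ is not needed for this step.

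\textbf{The $2$-generated case.} The paper applies the collapse $\mathscr{S}=\mathscr{A}=\mathscr{T}$ from \cite{mahdavithesis} to $G$ directly. You apply Corollary~1 of \cite{MAHDAVIANARY1984} to each $2$-generator $p$-group $G_p$ with $p\ge 3$, and then assemble the chain $K\unlhd K^G\unlhd G$ componentwise. This is valid, and the paper itself quotes that corollary in a later remark. It is arguably better anchored than the paper's appeal to \cite{mahdavithesis}, which the introduction cites only for the equality $\mathscr{A}=\mathscr{T}$.
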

\begin{proof}
By hypothesis, \(G \in \widehat{\mathscr{T}}\). By Corollary \ref{cor4}, \(G\) is a nilpotent profinite group of class at most \(3\). As a pronilpotent group, \(G\) decomposes into the direct product of its Sylow \(p\)-subgroups (\cite[Proposition~2.3.8]{RZ10}). According to Zel'manov's celebrated solution to the Restricted Burnside Problem \cite{zelma}, any finitely generated periodic pro-\(p\) group must be finite. Therefore, every Sylow \(p\)-subgroup of \(G\) is finite. Since \(\pi\) is a finite set, the direct product of these Sylow subgroups is finite, meaning \(G\) itself is a finite \(\pi\)-group. Because \(G\) is a finite group lacking even-order elements, it falls into \(\mathscr{A}\) by Proposition \ref{prop1} (or \cite[Theorem~1(i)]{mahdavimain}). The final statement follows from the established result that for \(2\)-generated groups, the classes collapse to \(\mathscr{S} = \mathscr{A} = \mathscr{T}\) (see \cite{mahdavithesis}).
\end{proof}

\begin{proposition} \label{prop2} 
Let \(G \in \widehat{\mathscr{T}}\) be a non-periodic profinite group all of
whose infinite procyclic subgroups are torsion-free. Then \(G \in \widehat{\mathscr{A}}.\)
\end{proposition}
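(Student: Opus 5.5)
By Proposition~\ref{G2engeldemais}, $G$ is a $2$-Engel profinite group, so every element of $G$ is a right $2$-Engel element. The plan is to verify the criterion of Lemma~\ref{2geradob}: for all $v,x,y\in G$ with $[x,y]=1$ we want $[v,x,y]\in\overline{\langle x,y\rangle}$. Then $G\in\widehat{\mathscr{A}}$ follows at once from that lemma.

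First use the $2$-Engel identities of Lemma~\ref{identitiesof2engels} for arbitrary $v,x,y$. Taking $w=v$ in (ii) gives
\[
[v,x,y]=[v,y,x]^{-1}.
\]
Next, $x$ is itself right $2$-Engel. Applying (iii) with $w=x$ gives
\[
[x,[v,y]]=[x,v,y]^{2}.
\]
Since $[x,v]=[v,x]^{-1}$ and $x$ is also left $2$-Engel by (i), expand $[x,v,y]$ via Lemma~\ref{prop comutadores}\eqref{comut tirar inverso} to relate it to $[v,x,y]^{-1}$ up to a conjugation. The goal is the standard $2$-Engel relation
\[
[v,x,y]^{2}=[v,[x,y]]^{\pm 1},
\]
which is Lemma~\ref{identitiesof2engels}(iii) with $w=v$. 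Indeed (iii) states literally $[v,[x,y]]=[v,x,y]^2$, so this step is immediate.

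Now use $[x,y]=1$: $[v,[x,y]]=[v,1]=1$, hence $[v,x,y]^2=1$. So $[v,x,y]$ is an element of order dividing $2$. The remaining task, and the main obstacle, is to show that $[v,x,y]=1$, since the trivial element obviously lies in $\overline{\langle x,y\rangle}$. If there is no $2$-torsion, this is immediate. Otherwise I would split $G=\prod_p G_p$ into Sylow subgroups, using that $G$ is pronilpotent of class at most $3$ by Lemma~\ref{class3}. Then $[v,x,y]$ lies in $\gamma_3(G)$, and Corollary~\ref{gammatrestres} gives $(\gamma_3(G))^3=\{1\}$. An element $c$ with $c^2=1=c^3$ satisfies $c=c^3(c^2)^{-1}=1$. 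Hence $[v,x,y]=1$, so $[v,x,y]\in\overline{\langle x,y\rangle}$ trivially.

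The hypotheses (non-periodic, infinite procyclic subgroups torsion-free, $G\in\widehat{\mathscr{T}}$) are used exactly to invoke Proposition~\ref{G2engeldemais} and Corollary~\ref{gammatrestres}. Consequently $G$ satisfies the criterion of Lemma~\ref{2geradob}, and $G\in\widehat{\mathscr{A}}$. In fact the argument shows more: every commuting pair $x,y$ satisfies $[v,x,y]=1$ for all $v$.
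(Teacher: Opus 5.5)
Your proof is correct, but it takes a different and shorter route than the paper's. Both arguments reduce to the criterion of Lemma~\ref{2geradob}. The paper's proof does not use the $2$-Engel identities at that point. It works only with the $\widehat{\mathscr{T}}$-criterion $[z,u,u]\in\overline{\langle u\rangle}$ of Lemma~\ref{lemma1}. It then uses the Hall--Witt identity (with $\gamma_3(G)$ central) and metabelianness to get $[z,y,x]=[z,x,y]$. Expanding $[z,xy,xy]$ isolates $[z,x,y]^2\in\overline{\langle x,y\rangle}$, and finally $(\gamma_3(G))^3=\{1\}$ gives $[z,x,y]\in\overline{\langle x,y\rangle}$.

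You instead invoke Proposition~\ref{G2engeldemais} up front and apply Lemma~\ref{identitiesof2engels}(iii) with $w=v$. This gives $[v,x,y]^2=[v,[x,y]]=1$ in one line, and Corollary~\ref{gammatrestres} then forces $[v,x,y]=1$. Your version yields a stronger conclusion: $\overline{[G,H]}$ actually centralizes, not merely normalizes, every closed abelian subgroup $H$. In a $2$-Engel group you could even skip the exponent-$3$ step, using the standard cyclic symmetry $[v,x,y]=[x,y,v]$.

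What the paper's computation buys is generality. It uses $2$-Engelness only through $(\gamma_3(G))^3=\{1\}$, so it shows that any $G\in\widehat{\mathscr{T}}$ with $(\gamma_3(G))^3=\{1\}$ lies in $\widehat{\mathscr{A}}$, whether or not $G$ is $2$-Engel.

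Several steps in your write-up are unnecessary and can be deleted: applying (ii), applying (iii) with $w=x$, the Sylow decomposition, and the case split on $2$-torsion.
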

\begin{proof} 
By Lemma \ref{class3}, \(G\) is a nilpotent profinite group of class at most \(3\). In particular, \(G\) is metabelian and \(\gamma_3(G) \leqslant Z(G)\). In light of Lemma \ref{2geradob}, to prove that \(G \in \widehat{\mathscr{A}}\), it suffices to show that for any commuting elements \(x, y \in G\) and for all \(z \in G\), the commutator \([z,x,y]\) lies in \(H = \overline{\langle x,y \rangle}\).

Because \(\gamma_3(G)\) is central and \(H\) is abelian (\([x,y]=1\)), the Hall-Witt identity reduces to:
\begin{equation*} 
    1 = [x, z, y]^{z^{-1}} \cdot [z,y,x]^{y^{-1}} \cdot [y,x,z]^{x^{-1}} = [x,z,y] \cdot [z,y,x] \quad \text{for all } z \in G. 
\end{equation*}
This establishes that \([z,y,x] = [x,z,y]^{-1}\). Since \(G\) is metabelian, the derived subgroup \(\gamma_2(G) = \overline{[G,G]}\) is abelian. Because the conjugate \(([x,z]^{-1})^{y}\) lies in \(\gamma_2(G)\), we evaluate the commutator:
\begin{align} \label{quasela}
    [z,y,x] &= [x,z,y]^{-1} = [y, [x,z]] = ([x,z]^{-1})^{y}[x,z] \nonumber \\
    &= [x,z]([x,z]^{-1})^{y} = [[x,z]^{-1},y] = [z,x,y].
\end{align}

By Lemma \ref{lemma1}, we know that \([z,xy,xy] \in \overline{\langle xy \rangle} \leq H\). Since \(\gamma_3(G)\) is central, the following expansion holds for any \(a,b,c,d \in G\):
\begin{equation} \label{usaagora}
    [[a,b],cd] = [[a,b],d] \cdot [[a,b],c]^d = [[a,b],d] \cdot [[a,b],c]. 
\end{equation} 
As \(\gamma_4(G) = \{1\}\) and \(\gamma_3(G)\) is central, repeated application of \eqref{usaagora} and Lemma \ref{prop comutadores} to expand \([z,xy,xy]\):
\begin{align*}
    [z,xy,xy] &= [[z,xy],y] \cdot [[z,xy],x] \\
    &= [[z,y][z,x][z,x,y], y] \cdot [[z,y][z,x][z,x,y], x] \\
    &= [[z,y][z,x], y] \cdot [[z,y][z,x], x] \\
    &= [[z,y], y] \cdot [[z,x], y] \cdot [[z,y], x] \cdot [[z,x], x] \\
    &= [z,y,y] \cdot [z,x,y] \cdot [z,y,x] \cdot [z,x,x] \\
    &= [z,x,x] \cdot [z,y,y] \cdot [z,x,y] \cdot [z,y,x].
\end{align*}
By Lemma \ref{lemma1}, \([z,x,x] \in \overline{\langle x \rangle} \leq H\) and \([z,y,y] \in \overline{\langle y \rangle} \leq H\). Thus, their product \([z,x,x][z,y,y]\) belongs to \(H\). Utilizing equation \eqref{quasela}, which states \([z,y,x] = [z,x,y]\), we can isolate the cross terms:
\[ [z,x,y]^2 = [z,y,y]^{-1} [z,x,x]^{-1} [z,xy,xy] \in \overline{\langle x,y \rangle} = H. \]

Finally, by Corollary \ref{gammatrestres}, \((\gamma_3(G))^3 = \{1\}\). This allows us to write the inverse as a square:
\[ 1 = [z,x,y]^3 = [z,x,y]^2 [z,x,y] \implies [z,x,y]^{-1} = [z,x,y]^{2} \in H. \]
Consequently, \([z,x,y] \in H\). By Lemma \ref{2geradob}, \(G \in \widehat{\mathscr{A}}\).
\end{proof}

\begin{proposition}\label{prop2soengel}
Let \(G \in \widehat{\mathscr{T}}\) be a non-periodic 2-Engel profinite 
group. Then \(G \in \widehat{\mathscr{A}}\).
\end{proposition}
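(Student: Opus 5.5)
The argument follows the proof of Proposition~\ref{prop2}. The torsion-freeness hypothesis was used there in only two places: to obtain the $2$-Engel condition (through Proposition~\ref{G2engeldemais}) and to get $(\gamma_3(G))^3=\{1\}$ (through Corollary~\ref{gammatrestres}). Here $2$-Engelness is assumed directly, and Proposition~\ref{gammatrestres2engel} gives $(\gamma_3(G))^3=\{1\}$ for any $2$-Engel $G\in\widehat{\mathscr{T}}$. So I will simply rerun the argument of Proposition~\ref{prop2}, replacing each appeal to the torsion-freeness hypothesis by these two facts. The non-periodicity assumption plays no role in what follows.

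\textbf{Step 1: reduction.} By Lemma~\ref{class3}, $G\in\widehat{\mathfrak{N}}_3$. Hence $G$ is metabelian, $\gamma_3(G)\le Z(G)$, and $\gamma_4(G)=\{1\}$. By Lemma~\ref{2geradob}, it suffices to show that $[z,x,y]\in H=\overline{\langle x,y\rangle}$ for all $z\in G$ and all commuting $x,y\in G$.

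\textbf{Step 2: the key computation.} Two facts carry over verbatim from the proof of Proposition~\ref{prop2}. First, the Hall--Witt identity together with centrality of $\gamma_3(G)$ and $[x,y]=1$ gives $[z,y,x]=[z,x,y]$, as in \eqref{quasela}. Second, expanding $[z,xy,xy]$ with \eqref{usaagora} gives
\[
[z,xy,xy]=[z,x,x]\,[z,y,y]\,[z,x,y]\,[z,y,x].
\]
Since $G$ is $2$-Engel, every term of the form $[z,w,w]$ is trivial, so this identity collapses to $1=[z,x,y]^2$.

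\textbf{Step 3: conclusion.} Proposition~\ref{gammatrestres2engel} gives $[z,x,y]^3=1$. Combined with $[z,x,y]^2=1$ from Step 2, this yields $[z,x,y]=1\in H$. Lemma~\ref{2geradob} then gives $G\in\widehat{\mathscr{A}}$.

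In fact Step 2 already shows $\gamma_3$-commutators of commuting pairs vanish, so the argument works even without invoking Lemma~\ref{lemma1}. The only point needing care is the appeal to Proposition~\ref{gammatrestres2engel}, which relies on the Hall--Witt computation from Corollary~\ref{gammatrestres}; that computation used only the $2$-Engel identities of Lemma~\ref{identitiesof2engels}, so I expect no real obstacle. The main thing to verify is that the step $[z,y,x]=[z,x,y]$ uses only metabelianity and centrality of $\gamma_3(G)$, and not torsion-freeness. I believe this holds, since \eqref{quasela} was derived from those properties alone.
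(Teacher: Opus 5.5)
Your proof is correct and follows the paper's route: rerun the argument of Proposition~\ref{prop2} through Lemma~\ref{2geradob}, with Proposition~\ref{gammatrestres2engel} supplying $(\gamma_3(G))^3=\{1\}$ in place of the procyclic-subgroup hypothesis. The difference is that you use the $2$-Engel identity to kill $[z,x,x]$, $[z,y,y]$ and $[z,xy,xy]$, which sharpens the paper's conclusion $[z,x,y]\in H$ to $[z,x,y]=1$; note that Step~2 alone only gives $[z,x,y]^2=1$, so your closing remark that it ``already shows'' vanishing also needs Step~3, which you do supply.
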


\begin{proof}
The proof is identical to that of Proposition \ref{prop2}, except 
that the step invoking \((\gamma_3(G))^3=\{1\}\) now relies on Proposition 
\ref{gammatrestres2engel} rather than on the hypothesis on infinite 
procyclic subgroups. Indeed, in Proposition \ref{prop2} that hypothesis 
serves only to show that \(G\) is 2-Engel and, consequently, that 
\((\gamma_3(G))^3=\{1\}\); here this is precisely what Proposition 
\ref{gammatrestres2engel} provides, and all remaining arguments are 
unchanged.
\end{proof} 

We summarize our last results in the profinite analogue of Mahdavianary's \cite[Theorem~1(i)]{mahdavimain}.

\begin{theoremalfa}\label{maintheorem}
If \(G \in \widehat{\mathscr{T}}\) is a profinite group which either is
non-periodic and has all its infinite procyclic subgroups torsion-free, or
is torsion and has no involutions, then \(G \in \widehat{\mathscr{A}}\).    
\end{theoremalfa}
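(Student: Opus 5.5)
The theorem is a summary of the two preceding propositions, so the plan is a case split according to the two alternatives in the hypothesis, with each case discharged by a result already established.

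\emph{First case.} Suppose $G \in \widehat{\mathscr{T}}$ is non-periodic and all its infinite procyclic subgroups are torsion-free. These are exactly the hypotheses of Proposition~\ref{prop2}, which gives $G \in \widehat{\mathscr{A}}$ directly. It is worth recalling why that argument works. Proposition~\ref{G2engeldemais} shows that $G$ is $2$-Engel. Corollary~\ref{gammatrestres} then gives $(\gamma_3(G))^3=\{1\}$. The criterion of Lemma~\ref{2geradob} is checked through the expansion of $[z,xy,xy]$, which shows that $[z,x,y]^2$ lies in $\overline{\langle x,y\rangle}$ for commuting $x,y$. Finally, exponent $3$ on the central subgroup $\gamma_3(G)$ turns this square back into $[z,x,y]^{-1}$, so $[z,x,y]$ itself lies in $\overline{\langle x,y\rangle}$.

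\emph{Second case.} Suppose $G \in \widehat{\mathscr{T}}$ is torsion and has no involutions. Then Proposition~\ref{prop1} applies verbatim and yields $G \in \widehat{\mathscr{A}}$. The mechanism there is different. Since $G$ is a torsion profinite group without involutions, every finite quotient $G/U$ is a $2'$-group lying in $\mathscr{T}$ by Lemma~\ref{formula0}. Mahdavianary's abstract theorem then puts each $G/U$ in $\mathscr{A}$. Pulling back along the projections $\pi_U$ and intersecting over the open normal subgroups $U$ gives $H \unlhd_c \overline{H^G}$ for every closed abelian subgroup $H$.

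The only point needing a moment's care is that the two alternatives in the statement match the hypotheses of Propositions~\ref{prop1} and~\ref{prop2} exactly. In particular, ``torsion'' in the second alternative must mean that every element has finite order, and the absence of involutions rules out elements of even order. That is precisely what makes every finite continuous quotient a $2'$-group, as used in Proposition~\ref{prop1}. With this checked, there is no genuine obstacle, and the theorem follows by combining the two cases.
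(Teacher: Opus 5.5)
Your proposal is correct and is exactly the paper's proof. The theorem is the conjunction of Proposition~\ref{prop2} (the non-periodic case with torsion-free infinite procyclic subgroups) and Proposition~\ref{prop1} (the torsion case without involutions), and your summaries of how each proposition works are accurate.
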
 
\begin{proof}
This follows directly from Propositions \ref{prop1} and \ref{prop2}.    
\end{proof}

\begin{corollary}
Let $G$ be a torsion-free profinite group all of whose procyclic subgroups are $2$-subnormal. Then $G \in \widehat{\mathscr{A}}.$  
\end{corollary}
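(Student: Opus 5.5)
This corollary should follow from Theorem~\ref{maintheorem}, more precisely from Proposition~\ref{prop2}; the only work is checking the hypotheses. I would split into two cases according to whether $G$ is trivial. If $G=\{1\}$, every closed abelian subgroup equals $G$, which has defect $0$, so $G\in\widehat{\mathscr{A}}$ trivially. From now on assume $G\neq\{1\}$.

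First, $G\in\widehat{\mathscr{T}}$ by hypothesis, since every procyclic subgroup is $2$-subnormal. Next, $G$ is non-periodic: any nontrivial $g\in G$ has infinite order, because $G$ is torsion-free.

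It remains to check that every infinite procyclic subgroup $\overline{\langle u\rangle}$ is torsion-free. This holds because it is a subgroup of the torsion-free group $G$. Consequently, by the structure of procyclic groups (as used in Step~1 of Proposition~\ref{G2engeldemais}), $\overline{\langle u\rangle}\cong\prod_{p\in\pi}\mathbb{Z}_p$ for some nonempty set of primes $\pi$.

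All hypotheses of Proposition~\ref{prop2} (equivalently, the first alternative of Theorem~\ref{maintheorem}) are therefore satisfied, and $G\in\widehat{\mathscr{A}}$ follows. Alternatively, Theorem~\ref{2baer} shows that $G$ is $2$-Engel, and Proposition~\ref{prop2soengel} then gives the same conclusion. I expect no real obstacle here; the only point needing care is the trivial group, which is not non-periodic and so is handled separately above.
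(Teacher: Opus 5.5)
Your proposal is correct and matches the paper, which gives no separate proof and places this corollary immediately after Theorem~\ref{maintheorem} as a direct consequence of its non-periodic alternative (Proposition~\ref{prop2}); you verify the hypotheses in the same way. Your separate treatment of the trivial group, which is not non-periodic, is a small precaution the paper leaves implicit.
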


\section{Strict inclusions and Cartesian products}

Although the strictness of the inclusions \(\widehat{\mathscr{S}} \subseteq \widehat{\mathscr{A}} \subseteq \widehat{\mathscr{T}}\) can be partially resolved when looking at finite groups endowed with the discrete topology, it is a natural question whether these classes remain strictly distinct when considering infinite profinite groups. In this final section, we extend Mahdavianary's discrete counterexamples \cite{mahdavimain} to construct explicit infinite profinite groups that separate these classes. Furthermore, we prove that the class \(\widehat{\mathscr{T}}\) is not closed under arbitrary Cartesian products.

We begin by recalling the properties of a specific finite group constructed by Mahdavianary \cite[Theorem~1(ii)]{mahdavimain}.

\begin{remark} \label{grupo D caracterizado}
Let $D$ be the group defined by the presentation:
\begin{align*}
    D =  \langle a, b, c \mid \; &[x_1, x_2, x_2]=1 \text{ for } x_1, x_2 \in \{a, b, c\}, \  c^2=b^8=1, [a, c, b]=[a, c]=1,\\
    &[b, c, a] = [a, b, c]=b^4, [a, b]^2 = [b, c]^2=a^2=1, \ \gamma_4(D)=1 \rangle.
\end{align*} 
Using the relations of $D$ (which has order $128$ and corresponds to \texttt{SmallGroup(128, 523)} in the GAP library \cite{GAP}), one can verify that $D$ is a nilpotent group of class $3$, hence metabelian, with center \(Z(D) = \langle b^2 \rangle \cong C_4\), derived subgroup \(D' = \langle [a, b], [b, c], b^4 \rangle\), and exponent \(8\). Mahdavianary proved that \(D \in \mathscr{T}\), but \(D \notin \mathscr{A}\) (since the elementary abelian subgroup \(\langle a, c \rangle\) has subnormal defect 3 in \(D\)). Notably, \(D\) is not a \(2\)-Engel group, despite the \(2\)-Engel identity holding for its generators, as evidenced by \([c, ab, ab] = b^4 \neq 1\).
\end{remark} 

\begin{theorem} \label{teo_infinite_counterexample_D}
There exists an infinite periodic profinite group $V$ such that $V \in \widehat{\mathscr{T}}$ but $V \notin \widehat{\mathscr{A}}$. 
\end{theorem}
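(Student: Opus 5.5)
The plan is to take the finite group $D$ of Remark~\ref{grupo D caracterizado} and enlarge it by a direct factor that is infinite, periodic and of coprime order. Concretely, I would set
\[
V = D \times A, \qquad A=\prod_{n\in\mathbb{N}} C_3 ,
\]
with $D$ discrete and $A$ carrying the product topology. Then $V$ is an infinite profinite group of exponent $24$, so it is periodic and infinite.

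First I show that $V\in\widehat{\mathscr{T}}$, using the criterion of Lemma~\ref{lemma1}: it suffices to check that $[v,u,u]\in\overline{\langle u\rangle}$ for all $u,v\in V$.
\begin{itemize}
\item Write $u=(d,a)$ and $v=(d',a')$. Since $A$ is abelian and commutators are computed coordinatewise, $[v,u,u]=([d',d,d],1)$.
\item Since $D\in\mathscr{T}$ (Remark~\ref{grupo D caracterizado}), the abstract version of Lemma~\ref{lemma1} (i.e.\ Heineken's Lemma~1, or Lemma~\ref{lemma1} applied to the discrete group $D$) gives $[d',d,d]=d^k$ for some integer $k$.
\item The order of $d$ is a power of $2$ and the order of $a$ divides $3$, and these are coprime. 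By the Chinese remainder theorem, $\langle (d,a)\rangle=\langle d\rangle\times\langle a\rangle$; this subgroup is finite, hence closed.
\item Therefore $(d^k,1)\in\overline{\langle u\rangle}$, and so $V\in\widehat{\mathscr{T}}$.
\end{itemize}
The coprimality is exactly why I choose an odd-order factor. With a $2$-group factor such as $\prod C_2$, the element $(d^k,1)$ need not lie in $\langle (d,a)\rangle$ when $k$ is odd and $a\neq 1$.

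Next I show that $V\notin\widehat{\mathscr{A}}$. Let $H=\langle a,c\rangle\times\{1\}$, a finite, hence closed, elementary abelian subgroup of $V$. Suppose for contradiction that $H\unlhd_c K\unlhd_c V$ for some closed subgroup $K$. Identify $D$ with $D\times\{1\}$; it is normal in $V$ because it is a direct factor. Intersecting the chain with $D$ gives
\[
H=H\cap D\unlhd K\cap D\unlhd D,
\]
since intersecting a normal subgroup with a subgroup preserves normality in that subgroup. This says $\langle a,c\rangle$ has defect at most $2$ in $D$, contradicting Mahdavianary's result (Remark~\ref{grupo D caracterizado}) that its defect in $D$ is $3$.

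I expect the main obstacle to be the first step, namely making sure the procyclic subgroups of $V$ still absorb the elements $[v,u,u]$; the coprime-order choice of $A$ is what resolves it. The remaining steps are routine once the facts about $D$ in Remark~\ref{grupo D caracterizado} are granted.
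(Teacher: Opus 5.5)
Your proof is correct. It differs from the paper's only in the choice of the auxiliary factor. The argument that $V\notin\widehat{\mathscr{A}}$, intersecting a closed chain $H\unlhd_c K\unlhd_c V$ with $D$, is the same as in the paper.

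The paper takes $V=D\times E$ with $E$ an infinite elementary abelian pro-$2$ group. It therefore has to deal with exactly the obstruction you point out for odd exponents. It does so by observing that $[d_1,d_2,d_2]\in\gamma_3(D)\cap\langle d_2\rangle\le Z(D)\cap\langle d_2\rangle$. If $d_2$ is central, this commutator is trivial. Otherwise $Z(D)\cap\langle d_2\rangle$ is a proper subgroup of the cyclic $2$-group $\langle d_2\rangle$, hence lies in $\langle d_2^2\rangle$. So the exponent can always be taken even, and $(d_2^{2k},1)=(d_2^{2k},u_2^{2k})=v^{2k}$.

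Your coprime factor $\prod C_3$ makes this verification purely formal via the Chinese remainder theorem. It uses nothing about $D$ beyond $D\in\mathscr{T}$, so it works verbatim for any finite group in $\mathscr{T}\setminus\mathscr{A}$ paired with an infinite abelian profinite factor of exponent coprime to its order. The paper's choice costs a small structural argument about $D$, but it yields a pro-$2$ example of exponent $8$. That shows $\widehat{\mathscr{T}}$ and $\widehat{\mathscr{A}}$ are already separated among infinite pro-$p$ groups, which your pro-$\{2,3\}$ group does not.
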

\begin{proof} 
Let \(V = D \times E\) be the direct product of the Mahdavianary group \(D\) (see Remark \ref{grupo D caracterizado}) and an infinite elementary abelian pro-\(2\) group \(E\). Because \(D\) has exponent 8 and \(E\) has exponent 2, \(V\) is an infinitely generated periodic pro-\(2\) group of exponent 8. Observe that for any \(u \in V\), the procyclic subgroup \(\overline{\langle u \rangle} = \langle u \rangle\) is a finite \(2\)-group of order at most 8. We divide the remainder of the proof into two steps.

First, we show that \(V \in \widehat{\mathscr{T}}\). By Lemma \ref{lemma1}, \(V \in \widehat{\mathscr{T}}\) if and only if \([u,v,v] \in \langle v \rangle\) for all \(u, v \in V\). Let \(u = (d_1, u_1)\) and \(v = (d_2, u_2)\) be arbitrary elements in \(D \times E\). Since \(E\) is abelian, we have \([u,v,v] = ([d_1, d_2, d_2], 1)\). Since \(D \in \mathscr{T}\), Lemma~1 of \cite{mahdavimain} gives \([d_1, d_2, d_2] \in \langle d_2 \rangle\). If \(d_2=1\), then \([u,v,v] = 1 \in \langle v \rangle\). If \(d_2 \neq 1\), since \(\gamma_4^{abs}(D)=\{1\}\), we have
\(\gamma_3^{abs}(D) \leqslant Z(D) = \langle b^2 \rangle\), hence
\([d_1,d_2,d_2] \in Z(D) \cap \langle d_2 \rangle\).
If \(d_2 \in Z(D)\), then \([d_1,d_2,d_2]=1=d_2^0\); otherwise
\(Z(D) \cap \langle d_2 \rangle\) is a proper subgroup of \(\langle d_2 \rangle\),
hence contained in \(\langle d_2^2 \rangle\). Therefore
\([d_1,d_2,d_2] = d_2^{2k}\) for some integer \(0 \leq k \leq 3.\)  Consequently, 
\[ [u,v,v] = ([d_1, d_2, d_2], 1)= (d_2^{2k}, 1) = (d_2^{2k}, u_2^{2k}) = v^{2k} \in \langle v \rangle. \]
Therefore, \(V \in \widehat{\mathscr{T}}\).

Second, we prove that \(V \notin \widehat{\mathscr{A}}\). Consider the subgroup \(H = \langle a, c \rangle \leq D\). By \cite[Theorem~1(ii)]{mahdavimain}, \(H\) is an elementary abelian \(2\)-subgroup of \(D\) whose subnormal defect is exactly 3. We view \(H\) as a closed abelian subgroup of \(V\) (via \(H \times \{1\}\)). Suppose, for a contradiction, that the subnormal defect of \(H\) in \(V\) is at most \(2\). Then there exists a closed normal chain \(H \unlhd_c L \unlhd_c V\). Intersecting this chain with the discrete subgroup \(D \times \{1\} \cong D\), we obtain the chain
\(H \unlhd (L \cap D) \unlhd D\).
This implies that \(H\) has subnormal defect at most \(2\) in \(D\), contradicting the fact that its subnormal defect in \(D\) is \(3\). Thus, the subnormal defect of \(H\) in \(V\) is strictly greater than 2, proving that \(V \notin \widehat{\mathscr{A}}\).
\end{proof}

\begin{remark} \label{potencia gama} A well-known fact about an abstract nilpotent group \(G\), which can be proved by induction on \(i\) using Lemma \ref{prop comutadores} (5), is that for every \(n \in \mathbb{N}\), if \(a \in \gamma_{i}^{abs}(G)\) and \(b \in G\), then \([a, b^n] \equiv [a, b]^n \pmod{\gamma_{i+2}^{abs}(G)}\) (see \cite{Hall1933}).  
\end{remark}

\begin{theorem} \label{teo_infinite_counterexample_cappitt}
There exists a non-periodic 2-Engel profinite group \(W\) without 
involutions such that \(W \in \widehat{\mathscr{A}}\) but 
\(W \notin \widehat{\mathscr{S}}\). 
\end{theorem}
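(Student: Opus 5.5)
The plan is to build \(W\) as a direct product \(W=P\times\mathbb{Z}_3\). Here \(P\) is a suitable finite \(3\)-group, endowed with the discrete topology, that is \(2\)-Engel but not in \(\mathscr{S}\). The factor \(\mathbb{Z}_3\) supplies non-periodicity without introducing involutions. For \(P\) I would take the free Burnside group \(B(3,3)\) of exponent \(3\) on three generators \(x,y,z\) (alternatively, Mahdavianary's example from \cite[Theorem~1(iii)]{mahdavimain}). It is classical that groups of exponent \(3\) are \(2\)-Engel. \(B(3,3)\) has order \(3^7\) and class \(3\), with \(\gamma_3(B(3,3))=\langle [x,y,z]\rangle\) central of order \(3\). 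Then \(W\) is a profinite group with elements of infinite order, and it has no involutions because both factors are pro-\(3\).

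\textbf{Step 1: \(W\) is \(2\)-Engel and lies in \(\widehat{\mathscr{T}}\).} For \(u=(p_1,\alpha)\) and \(v=(p_2,\beta)\) in \(W\) we have \([u,v,v]=([p_1,p_2,p_2],0)=1\), since \(P\) is \(2\)-Engel and \(\mathbb{Z}_3\) is abelian. Hence \(W\) is \(2\)-Engel. Moreover \([u,v,v]=1\in\overline{\langle v\rangle}\) for all \(u,v\), so Lemma~\ref{lemma1} gives \(W\in\widehat{\mathscr{T}}\).

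\textbf{Step 2: \(W\in\widehat{\mathscr{A}}\).} Since \(W\) is a non-periodic \(2\)-Engel group in \(\widehat{\mathscr{T}}\), Proposition~\ref{prop2soengel} applies directly and gives \(W\in\widehat{\mathscr{A}}\).

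\textbf{Step 3: \(W\notin\widehat{\mathscr{S}}\).} This is the main point. Let \(H=\langle x,y\rangle\le P\), a group of order \(27\) with \(\gamma_2(H)=\langle [x,y]\rangle\). I will show that \(H\) does not have defect at most \(2\) in \(P\). Note that \([x,z]\in H^P\). Using Lemma~\ref{identitiesof2engels} (every element of \(P\) is right \(2\)-Engel), we get \(y^{[x,z]}=y\,[y,[x,z]]=y\,[x,z,y]^{-1}=y\,[x,y,z]^{\pm1}\). The element \([x,y,z]\) generates \(\gamma_3(P)\), which in \(B(3,3)\) meets \(H\) trivially: \(\gamma_3(H)=1\), and a central element of order \(3\) outside \(\langle [x,y]\rangle\) cannot lie in \(H\). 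I would verify this by a basis/normal form argument in \(B(3,3)\), or by a GAP check; this is the step I expect to need the most care. Consequently \(H^P\) does not normalize \(H\), so \(H\) has defect exactly \(3\) in \(P\) by Proposition~\ref{defecttwo} and the normal-closure series.

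Finally, regard \(H\) as the closed subgroup \(H\times\{0\}\) of \(W\). Suppose there were a chain \(H\unlhd_c L\unlhd_c W\). Intersecting with \(P\times\{0\}\) would give \(H\unlhd L\cap P\unlhd P\), exactly as in the proof of Theorem~\ref{teo_infinite_counterexample_D}. This contradicts Step 3, so \(W\notin\widehat{\mathscr{S}}\), and \(W\) has all the required properties.
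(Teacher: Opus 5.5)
Your proposal is correct, and it proves the statement by a genuinely different route from the paper.

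\textbf{The paper's route.} The paper takes $W=\widehat{G}$, where $G=\langle x,y,z\rangle$ is the infinite polycyclic Cappitt--Stadelmann group. This forces three extra ingredients:
\begin{enumerate}
\item the ERF property (Lemma~\ref{lem_polycyclic_normal_closure}), to pull Stadelmann's subgroup $\langle x^3z,x^9z\rangle$ back from $W$ to $G$;
\item right-exactness of completion, to obtain $W^{ab}\cong\widehat{\mathbb Z}\times C_3\times C_9$ and hence non-periodicity;
\item a separate argument that $[G,G]$ is a finite $3$-group, to exclude involutions.
\end{enumerate}

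\textbf{Your route.} You decouple the requirements. The finite exponent-$3$ group $B(3,3)$ carries the failure of $\widehat{\mathscr S}$, via a one-line $2$-Engel computation. The factor $\mathbb Z_3$ supplies elements of infinite order. The absence of involutions is automatic because $W$ is pro-$3$. The transfer from $P$ to $W$ is the same intersection argument as in Theorem~\ref{teo_infinite_counterexample_D}.

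\textbf{What each buys.} The paper's example is literally the profinite completion of Mahdavianary's abstract counterexample, and its non-periodicity is intrinsic rather than coming from an abelian direct factor. That fits the section's stated aim of transporting the discrete examples. Your proof is shorter and self-contained: it needs neither ERF nor Stadelmann's computation. It also makes clear that the real content is a finite group of exponent $3$ lying in $\mathscr A\setminus\mathscr S$, with non-periodicity a cosmetic addition.

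\textbf{Two small points.}
\begin{enumerate}
\item The step you flag as delicate closes without GAP. Let $\rho$ be the endomorphism of the relatively free group $B(3,3)$ with $x\mapsto x$, $y\mapsto y$, $z\mapsto 1$. It restricts to the identity on $H=\langle x,y\rangle$ and sends $[x,y,z]$ to $1$. Since $[x,y,z]\neq 1$ (class $3$), it follows that $[x,y,z]\notin H$. Your centrality argument also works once you note $[x,y]\notin\gamma_3(P)$, so that $\langle [x,y]\rangle\cap\gamma_3(P)=1$.
\item Drop the parenthetical alternative for $P$. The group of Mahdavianary's Theorem~1(iii) is precisely the paper's infinite, non-periodic Cappitt group. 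It is not a finite $3$-group, and with the discrete topology it is not profinite, so it cannot serve as $P$.
\end{enumerate}
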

\begin{proof}
Consider the abstract group \(G\) defined by Cappitt \cite{cappitt2} and further analyzed by Stadelmann \cite{Stadelmann1978} and Mahdavianary \cite[Theorem~1(iii)]{mahdavimain}, given by the presentation:
\begin{equation*}  
    G = \langle x,y,z \mid [u,v,v]=1 \text{ for all } u,v \in G, \; y^3 = [z,y], \; z^9 = [y,x]^3 = [z,y,x], \; [z,x]^3 = 1 \rangle.
\end{equation*} 
By definition, \(G\) is a \(2\)-Engel group. By Levi's Theorem \cite{levi}, any \(2\)-Engel group is nilpotent of class at most \(3\). Moreover, the relation \([z, y, x] = z^9 \neq 1\) shows that \(\gamma_3(G) \neq \{1\}\), so \(G\) has class exactly \(3\). Because \(G\) is a finitely generated nilpotent group, it is polycyclic and hence has the ERF property. Furthermore, its abelianization \(G^{ab}=G/G'\) is isomorphic to \(\mathbb{Z} \times C_3 \times C_9\), meaning \(G\) is an infinite group. Let \(W = \widehat{G}\) be its profinite completion. Note that the completion functor is right-exact (see Proposition~3.2.5 in \cite{RZ10}); since \(G\) is polycyclic,
\[ W^{ab} = \left( \widehat{G} \right)^{ab} = \frac{\widehat{G}}{\overline{[\widehat{G}, \widehat{G}]}} \cong \widehat{\left( \frac{G}{[G,G]} \right)} = \widehat{G^{ab}} \cong \widehat{\mathbb{Z}} \times C_3 \times C_9. \]
Therefore the profinite group \(W\) is non-periodic. 

\medskip
We now divide the remainder of the proof into three parts.

\medskip

\noindent\textbf{Part 1.} We prove that \(W\in\widehat{\mathscr{A}}\). The \(2\)-Engel identity \([u,v,v]=1\) is preserved by all finite quotients of \(G\), and consequently by the inverse limit. Thus, \(W = \widehat{G}\) is a \(2\)-Engel profinite group. By Lemma \ref{lemma1}, \(W \in \widehat{\mathscr{T}}\). Therefore \(W\) is a non-periodic 2-Engel profinite group, and it follows 
from Proposition \ref{prop2soengel} that \(W \in \widehat{\mathscr{A}}\). 

\medskip
\noindent\textbf{Part 2.} Next, we prove that \(W \notin \widehat{\mathscr{S}}\). Consider the subgroup \(H = \langle x^3z, x^9z \rangle \leq G\). Stadelmann \cite{Stadelmann1978} showed that \(H\) has subnormal defect strictly greater than 2 in \(G\). Let \(\overline{H}\) be the topological closure of \(H\) in \(W\). Suppose for a contradiction that \(\overline{H}\) is \(2\)-subnormal in \(W\), 
which implies a closed normal chain 
\(\overline{H} \triangleleft_c \overline{\overline{H}^{W}} \triangleleft_c W=\widehat{G}\). Intersecting this chain with \(G\) yields:
\[ (\overline{H} \cap G) \triangleleft \left(\overline{\overline{H}^{W}} \cap G\right) \triangleleft G. \]
Since \(G\) is polycyclic (hence ERF), Lemma \ref{lem_polycyclic_normal_closure} guarantees that \(\overline{H}^{W} \cap G=\overline{\overline{H}^{W}} \cap G = H^G\). Moreover, since \(H\) is finitely generated, it is closed in the profinite topology of \(G\), which means that \(\overline{H} \cap G = H\). Substituting these identities, we obtain the abstract subnormal chain \(H \triangleleft H^G \triangleleft G\). This implies that \(H\) has subnormal defect at most 2 in \(G\), a contradiction. Therefore, \(\overline{H}\) has defect strictly greater than 2 in \(W\), and we prove that \(W=\widehat{G} \notin \widehat{\mathscr{S}}\). 

\medskip
\noindent\textbf{Part 3.} Finally, it remains to show that \(W\), and also \(G\), have no involutions. 
Before doing so, we first establish the following. 

\medskip
\noindent\textbf{Claim.} \(\gamma_2^{abs}(G)=[G,G]\) is a finite \(3\)-group.

\medskip

\noindent Indeed, since \(G\) has class \(3\),
\[
[\gamma_2^{abs}(G),\gamma_2^{abs}(G)]\leq \gamma_4^{abs}(G)=\{1\}
\]
by \cite[Proposition 5.1.11]{robinson}. Thus \(\gamma_2^{abs}(G)\) is abelian, and so \(\gamma_3^{abs}(G)\) is an abelian normal subgroup of \(\gamma_2^{abs}(G)\). Since \(G\in\mathscr{T}\) by \cite[Theorem I (iii)]{mahdavimain}, it follows from \cite[Theorem 1]{heineken} that \((\gamma_3^{abs}(G))^3=\{1\}.\) Moreover, \(\gamma_3^{abs}(G)\) is finitely generated, since it is generated by the commutators of weight \(3\) in the generators \(x,y,z\) of \(G\) \cite[Lemma 2.6]{clement2017theory}. Hence \(\gamma_3^{abs}(G)\) is a finite elementary abelian \(3\)-group. By Theorem 3.1 of \cite{clement2017theory}, the quotient \(\gamma_2^{abs}(G)/\gamma_3^{abs}(G)\) is generated by the images of \([x,y]\), \([y,z]\), \([z,x]\) modulo \(\gamma_3^{abs}(G)\). Using the relations of the presentation of \(G\) and Remark \ref{potencia gama}, we have
\[
[z,x]^3=1,\qquad [y,x]^3=[z,y,x]\equiv 1 \pmod{\gamma_3^{abs}(G)}.
\]
Also, since \(y^3=[z,y]\in\gamma_2^{abs}(G)\), we have \([y^3,z]\in\gamma_3^{abs}(G)\). By Remark \ref{potencia gama}, applied with \(a=y\), \(b=z\) and \(i=1\),
\[
[y,z]^3\equiv [y^3,z]=[z,y,z]\equiv 1 \pmod{\gamma_3^{abs}(G)}.
\]
Hence every generator of \(\gamma_2^{abs}(G)/\gamma_3^{abs}(G)\) has order dividing \(3\). Since the quotient is abelian and finitely generated,
\[
\gamma_2^{abs}(G)/\gamma_3^{abs}(G)\cong C_3^{\,s}
\]
for some \(s\ge 0\), so it is a finite elementary abelian \(3\)-group. Since \(\gamma_3^{abs}(G)\) and \(\gamma_2^{abs}(G)/\gamma_3^{abs}(G)\) are both finite \(3\)-groups, it follows that \(\gamma_2^{abs}(G)\) is a finite \(3\)-group, which proves the claim. 

\bigskip

With the claim established, we now proceed to prove that \(W=\widehat{G}\) 
has no involutions. The commutator map
\[
[\cdot,\cdot]\colon W\times W\to W,\qquad (a,b)\mapsto[a,b],
\]
is continuous, and \(G\times G\) is dense in \(W\times W\). Hence
\[
\overline{[G,G]}=\overline{[W,W]}=[W,W],
\]
where the last equality uses that \([W,W]\) is closed in \(W\) (Theorem 4.8.2 in \cite{RZ10}). Since \(\gamma_2^{abs}(G)=[G,G]\) is finite, it is closed in \(W\), so
\[
[W,W]=\overline{[G,G]}=[G,G]=\gamma_2^{abs}(G).
\]
Therefore \([W,W]\) is a finite \(3\)-group. In particular, it has no elements of order \(2\). Now suppose, for a contradiction, that \(W\) has an involution \(w\). Its image in \(W^{ab}\cong \widehat{\mathbb Z}\times C_3\times C_9\) has order dividing \(2\). Since \(W^{ab}\) has no \(2\)-torsion, \(w\in\gamma_2(W)\). But \(\gamma_2(W)\) is a \(3\)-group and \(w^2=1\), so \(w=1\), a contradiction. Hence \(W=\widehat{G}\) has no involutions, and therefore neither does \(G\). 

\end{proof}

In order to prove that the class \(\widehat{\mathscr{T}}\) is not closed under Cartesian products, 
we require two preliminary lemmas detailing the behavior of elements in the 
Mahdavianary group \(D\).

\begin{lemma} \label{ordem de ab}
In the group $D$ (defined in Remark \ref{grupo D caracterizado}), $(ab)^4 = b^4$ and $|ab| = 8$.    
\end{lemma}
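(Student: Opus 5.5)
We compute $(ab)^4$ directly from the presentation of $D$, using the relations $a^2=1$, $b^8=1$, $[a,b]^2=1$, $\gamma_4(D)=1$, and the generator-level $2$-Engel relations $[a,b,b]=[b,a,a]=1$. The plan is to apply the Hall--Petrescu type expansion for a product of two elements in a nilpotent group of class at most $3$:
\[
(ab)^n = a^n b^n [b,a]^{\binom{n}{2}} [b,a,a]^{\binom{n}{3}} [b,a,b]^{\binom{n}{3}+\binom{n}{2}\cdot(\ast)} \cdots
\]
Rather than trust a general formula with uncertain exponents, we derive it by hand.

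\textbf{Squaring.} Using Lemma~\ref{prop comutadores}(2), write $ba = ab[b,a]$. Then
\[
(ab)^2 = abab = a\,(ba)\,b = a\,ab[b,a]\,b = a^2 b [b,a] b = b^2 [b,a][b,a,b].
\]
The relation $[x_1,x_2,x_2]=1$ gives $[a,b,b]=1$. Hence $[b,a,b]=[[a,b]^{-1},b]$, which is trivial because $[a,b]$ commutes with $b$. So $(ab)^2 = b^2[b,a]$.

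\textbf{Squaring again.} We get $(ab)^4 = b^2[b,a]\,b^2[b,a]$. Moving $b^2$ past $[b,a]$ costs only $[[b,a],b^2]$, and this is trivial since $[b,a]$ centralizes $b$. Therefore
\[
(ab)^4 = b^4[b,a]^2 = b^4,
\]
because $[b,a]^2=([a,b]^{-1})^2=1$.

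Once this is written out, the collection is almost trivial: the only inputs are $[a,b,b]=1$, $a^2=1$ and $[a,b]^2=1$. The main care point is sign and orientation conventions, for example deducing $[b,a,b]=1$ from $[a,b,b]=1$ via Lemma~\ref{prop comutadores}(1),(6).

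\textbf{Order of $ab$.} Since $(ab)^8=(b^4)^2=b^8=1$, the order of $ab$ divides $8$. It equals $8$ exactly when $b^4\neq 1$. This holds because $D$ has order $128$ and exponent $8$ with $Z(D)=\langle b^2\rangle\cong C_4$ (Remark~\ref{grupo D caracterizado}), so $b^2$ has order $4$ and $b^4\ne1$.

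The one real obstacle is justifying $b^4\neq1$ from the presentation rather than from the GAP identification. We will rely on the structural facts recorded in Remark~\ref{grupo D caracterizado}, namely $|Z(D)|=4$ with $Z(D)=\langle b^2\rangle$, which Mahdavianary established. Alternatively, one can note that $[c,ab,ab]=b^4\neq1$ is asserted there.
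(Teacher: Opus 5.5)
Your proof is correct and follows the paper's argument. Both compute $(ab)^2=b^2[b,a]$ from $a^2=1$ and the fact that $[a,b]$ commutes with $b$, then square to get $(ab)^4=b^4[b,a]^2=b^4$, and conclude $|ab|=8$ from $b^8=1$ together with $b^4\neq 1$, which is taken from the structural facts in Remark~\ref{grupo D caracterizado}. The tentative Hall--Petrescu formula at the start is never used and can be deleted.
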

\begin{proof}
Recall from Remark \ref{grupo D caracterizado} that from the relations in \(D=\left\langle a,b,c\right\rangle\), 
we have \(a^2 = b^8 = [a,b]^2 = 1\), \(Z(D) = \langle b^2 \rangle\), and 
\([x_1, x_2, x_2] = 1\) whenever \(x_1, x_2 \in \{a,b,c\}\). The last equality implies \([b,a,a] = [a,b,a] = [a,b,b] = 1\), 
from which it follows that \([a,b]\) commutes with both \(a\) and \(b\). Making use of Lemma \ref{prop comutadores}\eqref{comut conjugado}, we expand \((ab)^2\):
\[ (ab)^2 = abab = a^2 (a^{-1}bab) = 1 \cdot b^a b = b[b,a]b = b^2[b,a]. \]
Squaring this result yields:
\[ (ab)^4 = \left( b^2[b,a] \right)^2 = b^4 [b,a]^2 = b^4 \cdot 1 = b^4. \]
Since \(b^8 = 1\) and \(b^4 \neq 1\), it follows that \((ab)^4 \neq 1\) and \((ab)^8 = 1\), making \(|ab| = 8\).
\end{proof}

\begin{lemma} \label{comutador de c b ab}
In the group $D$, $[c, b, ab] = b^4$.    
\end{lemma}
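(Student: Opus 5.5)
We plan to expand $[c,b,ab]$ with the right-product rule, Lemma~\ref{prop comutadores}\eqref{comut produto dir}, taking $a$ and $b$ as the two factors:
\[
[c,b,ab]=[[c,b],ab]=[[c,b],b]\,[[c,b],a]^{b}.
\]
The first factor is $[c,b,b]$. This is trivial by the defining relation $[x_1,x_2,x_2]=1$ for $x_1,x_2\in\{a,b,c\}$ of $D$ (Remark~\ref{grupo D caracterizado}). So $[c,b,ab]=[c,b,a]^{b}$. Since $\gamma_4(D)=1$, the element $[c,b,a]$ lies in $\gamma_3(D)\le Z(D)$, and conjugation by $b$ does nothing. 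Hence $[c,b,ab]=[c,b,a]$.

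It then remains to identify $[c,b,a]$ with $b^4$. The presentation gives $[b,c,a]=b^4$. We will use $[c,b]=[b,c]^{-1}$ and the fact that $[b,c]\in\gamma_2(D)$. Then $[[b,c]^{-1},a]=([b,c,a]^{-1})^{[b,c]^{-1}}$ by Lemma~\ref{prop comutadores}\eqref{comut tirar inverso}. Centrality of $\gamma_3(D)$ turns this into $[b,c,a]^{-1}=b^{-4}$. Since $b^8=1$, we get $b^{-4}=b^4$, so $[c,b,a]=b^4$ as required.

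The only delicate point is the sign and inverse bookkeeping in this last step, namely checking that $[c,b,a]=[b,c,a]^{-1}$. This follows because weight-3 commutators are central in a class-3 group, so the map $g\mapsto[g,a]$ restricted to $\gamma_2(D)$ is a homomorphism. One could also double-check the result in GAP with \texttt{SmallGroup(128,523)}, but the hand computation above should suffice.
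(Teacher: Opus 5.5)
Your proposal is correct and follows essentially the same route as the paper: expand with Lemma~\ref{prop comutadores}\eqref{comut produto dir}, kill $[c,b,b]$ by the defining relation, and remove the conjugation by $b$ using centrality of weight-$3$ commutators. The only difference is in the last step: the paper gets $[c,b]=[b,c]^{-1}=[b,c]$ from the relation $[b,c]^2=1$, so $[c,b,a]=[b,c,a]=b^4$ directly, whereas you obtain $[c,b,a]=[b,c,a]^{-1}=b^{-4}$ from the inverse identity and centrality, and then use $b^8=1$; both arguments are valid.
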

\begin{proof}
Applying Lemma \ref{prop comutadores}\eqref{comut produto dir}, we expand the commutator:
\[ [c,b,ab] = [[c, b], ab] = [[c, b], b] \cdot [[c, b], a]^b. \]
Since \([x_1, x_2, x_2]=1\) for the generators, \([c,b,b] = 1\). Thus, the first term vanishes. For the second term, observe that \([b,c]^2=1\) implies \([c,b] = [b,c]^{-1} = [b,c]\). Therefore, \([[c,b],a] = [b,c,a] = b^4\). Because \([D,D,D] \leqslant Z(D)\), \(b^4\) is central, which means its conjugate by \(b\) remains unchanged. Hence,
\[ [c,b,ab] = (b^4)^b = b^4. \]
\end{proof}

\begin{theorem} \label{T não é fechada}
The class $\widehat{\mathscr{T}}$ is not closed under arbitrary Cartesian products.    
\end{theorem}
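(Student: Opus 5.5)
We will exhibit a Cartesian product of groups in $\widehat{\mathscr{T}}$ that is not itself in $\widehat{\mathscr{T}}$. The natural candidate is $P = D \times C_{\infty}$-type products, but since the lemmas just proved concern $ab$ and $[c,b,ab]$ in the Mahdavianary group $D$, we take $P = D \times D$, or more flexibly $P=\prod_{i\in I} D_i$ with each $D_i\cong D$. Each factor is finite, lies in $\mathscr{T}$ by Remark~\ref{grupo D caracterizado}, and hence lies in $\widehat{\mathscr{T}}$ with the discrete topology. $P$ is profinite (a product of finite groups). We will show $P \notin \widehat{\mathscr{T}}$ via Lemma~\ref{lemma1}. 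It suffices to find $u,v\in P$ with $[v,u,u]\notin\overline{\langle u\rangle}=\langle u\rangle$, where $u$ has finite order since $D$ has exponent $8$.

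The key is to choose the coordinates so that the commutator condition forces incompatible exponents. In $D$ itself we have $[c,ab,ab]=b^4=(ab)^4$ by Remark~\ref{grupo D caracterizado} and Lemma~\ref{ordem de ab}, so in the first coordinate $[v_1,u_1,u_1]=u_1^{4}$ with $u_1=ab$. In the second coordinate we want the same $u_2=ab$ but a $v_2$ with $[v_2,ab,ab]=1$, for instance $v_2=1$ or $v_2=a$. The latter works because $[a,ab,ab]$ is trivial: $[a,ab]=[a,b]$ up to the relations, and $[a,b,ab]=1$ since $[a,b]$ commutes with $a$ and $b$. Setting $u=(ab,ab)$ and $v=(c,1)$, we get $[v,u,u]=(b^4,1)$. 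If this lay in $\langle u\rangle$, then $(b^4,1)=((ab)^k,(ab)^k)$ for some $k$. The second coordinate forces $8\mid k$, since $|ab|=8$ by Lemma~\ref{ordem de ab}, and then the first coordinate equals $1\neq b^4$, a contradiction. Hence $\overline{\langle u\rangle}$ is not $2$-subnormal in $P$ by Lemma~\ref{lemma1}, so $P\notin\widehat{\mathscr{T}}$.

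This already works with a finite product $D\times D$, which would show that $\widehat{\mathscr{T}}$ is not even closed under finite direct products. Lemma~\ref{comutador de c b ab} suggests the authors intend a variant with $v=(c,?)$ and $u$ involving $b$, possibly $u=(ab,b)$ with $v=(c,c)$. Then $[v,u,u]=(b^4,[c,b,b])=(b^4,1)$, while $u^k=((ab)^k,b^k)$, and $b^k=1$ forces $8\mid k$, giving the same contradiction. I would present whichever version is cleanest. The only real obstacle is verifying the commutator values in $D$. The value $[c,ab,ab]=b^4$ is given in Remark~\ref{grupo D caracterizado}, and $[c,b,b]=1$ is a defining relation, so the variant with $u=(ab,b)$, $v=(c,c)$ needs no further computation. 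Lemma~\ref{comutador de c b ab} can serve as a consistency check, or as an alternative witness with $v=([c,b],\ast)$ and $u=(ab,\ast)$. To address "arbitrary" products, I would remark that any product with a factor in which this pattern occurs, such as $D^I$ for $|I|\ge2$, fails membership.
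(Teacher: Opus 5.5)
Your proof is correct and is essentially the paper's. Both apply Lemma~\ref{lemma1} to a two-coordinate witness in a product of copies of $D$; your variant $u=(ab,b)$, $v=(c,c)$ is almost exactly the paper's $h=(a_1b_1,b_2,1,\dots)$, $g=(c_1,a_2,1,\dots)$. Your diagonal choice $u=(ab,ab)$, $v=(c,1)$ is a slight streamlining: $\langle u\rangle$ lies in the diagonal while $(b^4,1)$ does not, so $D\times D\notin\widehat{\mathscr T}$ already, just as the paper shows for $Q_{16}\times Q_{16}$ in Remark~\ref{grupo Q16 caracterizado}.

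Drop the aside about an ``alternative witness'' $v=([c,b],\ast)$, $u=(ab,\ast)$. Since $[c,b,ab]=b^4$ is central, $[[c,b],ab,ab]=[b^4,ab]=1$, so that choice detects nothing.
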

\begin{proof}
Let \(U = \prod_{i \in \mathbb{N}} D_i\) be the infinite Cartesian product of copies of the group \(D\) (where \(D_i \cong D\) for all \(i\)). Since each \(D_i\) is finite, and every finite group in 
\(\widehat{\mathscr{T}}\), endowed with the discrete topology, lies in 
\(\mathscr{T}\), each \(D_i \in \widehat{\mathscr{T}}\) is also in 
\(\mathscr{T}\) (Remark \ref{grupo D caracterizado}). Let \(a_i\), \(b_i\), \(c_i\) denote the generators of \(D_i\). The group \(U\) is an infinitely generated periodic pro-\(2\) group of exponent 8. Consequently, the procyclic subgroup \(\overline{\langle u \rangle} = \langle u \rangle\) generated by any \(u \in U\) is a finite \(2\)-group of order at most 8. 

Define two elements \(h, g \in U\) as follows:
\begin{align*}
    h &= (a_1b_1, b_2, 1, 1, \dots), \\
    g &= (c_1, a_2, 1, 1, \dots).
\end{align*}
Suppose for a contradiction that \(U \in \widehat{\mathscr{T}}\). By Lemma \ref{lemma1}, \([g,h,h] \in \langle h \rangle\). Let us evaluate \([g,h,h]\) coordinate-wise:
\[ [g,h,h] = \left( [c_1, a_1b_1, a_1b_1], [a_2, b_2, b_2], 1, 1, \dots \right). \]
In the second coordinate, the identity \([x_1, x_2, x_2]=1\) immediately forces \([a_2, b_2, b_2] = 1\). In the first coordinate, we expand the inner commutator applying Lemma \ref{prop comutadores}\eqref{comut produto dir}:
\[ [c_1, a_1b_1] = [c_1, b_1] \cdot [c_1, a_1]^{b_1}. \]
Since \([c_1, a_1] = 1\), this reduces to \([c_1, a_1b_1] = [c_1, b_1]\). Reinserting this into the triple commutator yields:
\[ [c_1, a_1b_1, a_1b_1] = [[c_1, b_1], a_1b_1] = [c_1, b_1, a_1b_1]. \]
By Lemma \ref{comutador de c b ab}, \([c_1, b_1, a_1b_1] = b_1^4\). Therefore, \([g,h,h] = (b_1^4, 1, 1, 1, \dots)\). Since \([g,h,h] \in \langle h \rangle\), there exists an integer \(k\) 
such that \([g,h,h] = h^k\). Hence
\[ (b_1^4, 1, 1, \dots) = h^k = ((a_1b_1)^k, b_2^k, 1, \dots). \]
Comparing coordinates, we obtain \(b_2^k = 1\) and \((a_1b_1)^k = b_1^4\). Since \(|b_2| = 8\), the relation \(b_2^k = 1\) forces \(k\) to be a multiple of 8. However, by Lemma \ref{ordem de ab}, \(|a_1b_1| = 8\). Thus, \(k\) being a multiple of 8 forces \((a_1b_1)^k = 1\). This implies \(b_1^4 = 1\), which contradicts the presentation of \(D_1\) where \(b_1^4 \neq 1\). Therefore, we conclude that \([g,h,h] \notin \langle h \rangle\), 
which shows that \(U \notin \widehat{\mathscr{T}}\).
\end{proof}

Note that the group \(U\) is not contained in either \(\widehat{\mathscr{S}}\) or
\(\widehat{\mathscr{A}}\).
\begin{remark}\begin{sloppypar}\label{grupo Q16 caracterizado}
The failure of closure under Cartesian products is not restricted to
infinite products; it also occurs for finite direct products. Consider the
generalized quaternion group of order \(16\), defined by
$$
Q_{16}=\langle a,b\mid a^8=1,\; b^2=a^4,\; a^b=a^{-1}\rangle.
$$
By Example~\ref{grupo Q16}, \(Q_{16}\in\mathscr S\subset \mathscr A\subset 
\mathscr T\). Since \(Q_{16}\) is finite, when endowed with the discrete
topology it belongs to
\(\widehat{\mathscr{S}}\subset \widehat{\mathscr{A}}\subset 
\widehat{\mathscr{T}}\). 

Yet, \(Q_{16}\times Q_{16}\notin\mathscr{T}\). Suppose, for a contradiction,
that it were. Consider the elements \(g=(a,1)\) and \(h=(b,b)\) in
\(Q_{16}\times Q_{16}\). By \cite[Lemma~1]{mahdavimain}, we must have
\([g,h,h]\in\langle h\rangle\). Evaluating the commutator gives \([g,h,h]=([a,b,b],1)\). By Example~\ref{grupo Q16}, \([a,b,b]=a^4\neq 1\). Thus
\([g,h,h]=(a^4,1)\). However,
$$
\langle (b,b)\rangle
=
\{(1,1),\,(b,b),\,(a^4,a^4),\,(a^4b,a^4b)\},
$$
which does not contain \((a^4,1)\). Therefore \([g,h,h]\notin\langle h\rangle
=\langle (b,b)\rangle\), a contradiction. Consequently,
\(Q_{16}\times Q_{16}\notin\mathscr T\). This finite counterexample reaffirms
that the classes \(\widehat{\mathscr S}\), \(\widehat{\mathscr A}\), and
\(\widehat{\mathscr{T}}\) are not closed under Cartesian products.
\end{sloppypar}
\end{remark}

\section*{Acknowledgements}
The first author is grateful for the support from UFMG in his Post-Doctoral internship at this institution and for financial support from FAPDF. The second and third authors were partially supported by FAPDF and the last author is also grateful for the support from CNPq and for partial support from CAPES during her PhD.


\end{document}